\def\eps{\varepsilon}
\def\sig{\sigma}
\def\Om{\Omega}
\def\Ga{\Gamma}
\def\tt{\theta}
\def\R{{\mathbb R}}
\def\P{{\mathbb P}}
\def\C{{\mathcal C}} 
\def\T{\mathcal T}
\def\div{{\rm div}}
\def\sup{{\rm sup}}
\def\max{\rm max}
\def\nuu{\overrightarrow{\nu}}
\def\Gn{G\cdot\nu}
\def\W{W^{1,1}((0,+\infty)\, ;\, L^1(\Ga))}
\def\Winf{W^{1,\infty}}
\def\trho{\tilda{\rho}}
\def\tQ{\tilda{Q}}
\def\tB{\tilda{B}}
\def\tphi{\tilda{\phi}}
\def\bOm{\barre{\Om}}
\newtheorem{thr}{Theorem}
\newtheorem{defn}{Definition}
\newtheorem{prop}{Proposition}
\newtheorem{lem}{Lemma}
\newtheorem{rem}{Remark}
\newtheorem{cor}{Corollary}
\numberwithin{equation}{section}
\def\div{{\rm div}}
\def\d{\partial}
\newcommand{\ncmd}{\newcommand}
\ncmd{\tilda}{\widetilde}
\ncmd{\abs}[1]{\left| #1 \right|}
\ncmd{\beq}{\begin{equation}} \ncmd{\eeq}{\end{equation}}
\ncmd{\beqn}{\begin{equation*}} \ncmd{\eeqn}{\end{equation*}}
\ncmd{\beqa}{\begin{eqnarray}} \ncmd{\eeqa}{\end{eqnarray}}
\ncmd{\beqan}{\begin{eqnarray*}} \ncmd{\eeqan}{\end{eqnarray*}}
\ncmd{\edpn}[1]{\begin{equation*}\left\lbrace\begin{array}{l}
#1
\end{array}\right.\end{equation*}}
\ncmd{\edp}[3]{\begin{equation}\label{#1}\left\lbrace\begin{array}{#2}
#3
\end{array}\right.\end{equation}}
\ncmd{\barre}[1]{\overline{ #1}}
\ncmd{\espace}{\hspace*{0.5cm}}
\newcommand{\suff}{png}
\newcommand{\suff}{eps}
\newcommand{\undessincm}[4]{\begin{figure}[!ht]\begin{center}
    \includegraphics[width=#4cm]{#1.\suff}
     \end{center}
\caption{#2}\label{#3}\end{figure}}
\newcommand{\deuxdessinscm}[5]{\begin{figure}[!ht]\begin{center}
    \includegraphics[width=#5cm]{#1.\suff} \rotatebox[origin=c]{0}{A}\hskip 0.9cm
\includegraphics[width=#5cm]{#2.\suff} \rotatebox[origin=c]{0}{B}
    \end{center}
\caption{#3\label{#4}}\end{figure}}
\newcommand{\quatredessins}[6]{\begin{figure}[!ht]\begin{center}
\includegraphics[width=7cm]{#1.\suff}
\rotatebox[origin=c]{0}{A} \hskip 0.9cm
\includegraphics[width=7cm]{#2.\suff}
\rotatebox[origin=c]{0}{B} \vskip 0.9cm
\includegraphics[width=7cm]{#3.\suff}
\rotatebox[origin=c]{0}{C} \hskip 0.9cm
\includegraphics[width=7cm]{#4.\suff}
\rotatebox[origin=c]{0}{D} \vskip 0.9cm
    \end{center}
\caption{#5\label{#6}}\end{figure}}
\begin{document}
\title{Mathematical and numerical analysis of a model for anti-angiogenic therapy in metastatic cancers.}

\author{Benzekry S\'ebastien\thanks{CMI-LATP, UMR 6632, Universit\'e
de Provence, Technop\^ole Ch\^ateau-Gombert, 39, rue F. Joliot-Curie,
13453 Marseille cedex 13, France.}$\;^{,}$\thanks{Laboratoire de Toxicocinétique et Pharmacocinétique UMR-MD3. 27, boulevard Jean Moulin 13005 Marseille. France. E-mail:\texttt{benzekry@phare.normalesup.org}}}

\date{\today}

\maketitle
\begin{abstract} 
We introduce and analyze a phenomenological model for anti-angiogenic therapy in the treatment of metastatic cancers. It is a structured transport equation with a nonlocal boundary condition describing the evolution of the density of metastasis that we analyze first at the continuous level. We present the numerical analysis of a lagrangian scheme based on the characteristics whose convergence establishes existence of solutions. Then we prove an error estimate and use the model to perform interesting simulations in view of clinical applications. \\

Nous introduisons et analysons un modèle phénoménologique pour les thérapies anti-angiogéniques dans le traitement des cancers métastatiques. C'est une équation de transport structurée munie d'une condition aux limites non-locale qui décrit l'évolution de la densité de métastases. Au niveau continu, des estimations a priori prouvent l'unicité. Nous présentons l'analyse numérique d'un schéma lagrangien basé sur les caractéristiques, dont la convergence nous permet d'établir l'existence de solutions. Nous démontrons ensuite une estimation d'erreur et utilisons le modèle pour produire des simulations intéressantes au regard de possibles applications cliniques.

\smallskip
\noindent \emph{AMS 2010 subject classification : 35F16, 65M25, 92C50}

\smallskip
\noindent
{\bfseries Keywords} : Anticancer therapy modelling, Angiogenesis, Structured population dynamics, Lagrangian scheme.
 \end{abstract}
\tableofcontents
\newpage

\section*{Introduction}

During the evolution of a cancer disease, a fundamental step for the tumor consists in provoking proliferation of the surrounding blood vessels and migration toward the tumour. This process, called tumoral neo-angiogenesis establishes a proper vascular network which ensures to the tumour supply of nutrients and allow the tumor to grow further than 2-3 mm diameter. It is also important in the metastatic process by making possible the spread of cancerous cells to the organism which then can develop in secondary tumors (metastases). Thus, an interesting therapeutic strategy first proposed by J. Folkman \cite{folkman2} in the seventies consists in blocking angiogenesis with the goal to starve the primary tumor by depriving it from nutrient supply. This can be achieved by inhibiting the action of the Vascular Endothelial Growth Factor molecule either with monoclonal antibodies or tyrosine kinase inhibitors. Although the concept of the therapy seems perfectly clear, the practical use of the anti-angiogenic (AA) drugs leaves various open questions regarding to the best temporal administration protocols. Indeed, AA treatments lead to relatively poor efficacy and can even provoke deleterious effects, especially on metastases \cite{paezRibes}. Regarding to these therapeutic failures, it seems that the \textit{scheduling} of the drug plays a major role. Indeed, as shown in the publication \cite{ebos}, different schedules for the same drug can lead to completely different results. Moreover, AA drugs are never given in a monotherapy but always combined with cytotoxic agents (also named chemotherapy) which act directly on the cancerous cells. Again, the scheduling of the drugs seems to be highly relevant \cite{riely} and the optimal combination schedule between these two types of drugs is still a clinical open question. Thus, the complex dynamics of tumoral growth and metastatic evolution have to be taken into account in the design of temporal administration protocols for anti-cancerous drugs.\\
\espace In order to give answers to these questions, various mathematical models are being developed for tumoral growth including the angiogenic process. We can distinguish between two classes of models : mechanistic models (see for instance \cite{billy,floriane}) try to integrate the whole biology of the processes and comprise a large number of parameters; on the other hand phenomenological models aim to describe the tumoral growth without taking into account all the complexity levels (see \cite{swan} for a review and \cite{folkman,dOnofrioLedzewicz,barbo}). Most of these models deal only with growth of the primary tumor but in 2000, Iwata et al. \cite{iwata} proposed a simple model for the evolution of the population of metastases, which was then further studied in \cite{BBHV,devys}. This model did not include the angiogenic process in the tumoral growth and thus could not integrate a description of the effect of an AA drug. We combined it with the tumoral model introduced by Hahnfeldt et al. \cite{folkman} which takes into account for angiogenesis. The resulting partial differential equation is part of the so-called structured population dynamics (see \cite{perthame} for an introduction to the theory) : it is a transport equation with a nonlocal boundary condition. Its mathematical analysis is not classical because the structuring variable is two-dimensional; as far as we know such models have only been studied in the case where one structuring variable is the age and thus has constant velocity (see \cite{tucker,doumic}). This is not the case in our situation and the theoretical analysis of the model without treatment (autonomous case) was performed in \cite{benzekry}.\\
\espace In this paper, we present some mathematical and numerical analysis of the model in the non-autonomous case that is, integrating both cytotoxic and AA treatments and with a general growth field $G$ satisfying the hypothesis that there exists a positive constant $\delta$ such that $G\cdot \nu \geq \delta>0$ where $\nu$ is the normal to the boundary. We first simplify the problem by straightening the characteristics of the equation. We perform some theoretical analysis first at the continuous level (uniqueness and a priori estimates) using the theory of renormalized solutions. Then we introduce an approximation scheme which follows the characteristics of the equation (lagrangian scheme). The introduction of such schemes in the area of size-structured population equations can be found in \cite{angulo} for one-dimensional models. Here, we go further in the lagrangian approach by doing the change of variables straightening the characteristics and discretizing the simple resulting equation, in the case of a  general class of two-dimensional non-autonomous models. We prove existence of the weak solution to the continuous problem through the convergence of this scheme \textit{via} discrete a priori $L^\infty$ bounds and establish an error estimate in the case of more regular data.\\
\espace Finally, we use this scheme to perform various simulations demonstrating the possible utility of the model. First, as a predictive tool for the number of metastases in order to refine the existing classifications of cancers regarding to metastatic aggressiveness. Secondly, the model can be used to test various temporal administration protocols of AA drugs in monotherapy or combined with a cytotoxic agent.

\section{Model}

The model is based on the approach of \cite{iwata,BBHV,devys} to describe the evolution of a population of metastases represented by its density $\rho(t,X)$ with $X$ being the structuring variable, here two-dimensional $X=(x,\tt)$ with $x$ the size (=number of cells) and $\tt$ the so-called angiogenic capacity. It is a partial differential equation of transport type. The behavior of each individual of the population (metastasis), that is the growth rate $G(t,X)$ of each tumor is taken from \cite{folkman} and is designed to take into account for the angiogenic process, as well as the effect of both anti-angiogenic (AA) and cytotoxic drugs (CT). Its expression will be established in the following subsection. The model writes

\edp{equationNonAutonome}{ll}{\d_t\rho(t,X) + \div(\rho(t,X) G(t,X))=0  & \forall (t,X)\in ]0,T[\times \Om\\
     -\Gn(t,\sig) \rho(t,\sig)=N(\sig)\int_\Om \beta(X)\rho(t,X)dX + f(t,\sig) & \forall (t,\sig) \in]0,T[\times\d\Om\\
     \rho(0,X)=\rho^0(X) & \forall X \in \Om .
}
where $\Om$, the birth rate $\beta(X)$, the repartition along the boundary $N(\sig)$ and the source term $f(t,\sig)$ will be specified in the sequel, $T$ is a positive time and $\nu$ is the unit external normal vector to the boundary $\d\Om$.
\subsection{The model of tumoral growth under angiogenic control (Hahnfeldt et al. \cite{folkman})}
Let $x(t)$ denote the size (number of cells) of a given tumor at time $t$. The growth of the tumor is modeled by a gompertzian growth
rate modified by a death term describing the action of a CT. The equation is :
\begin{equation}\label{g1}
\frac{dx}{dt}=g_1(t,x)=ax\ln\left(\frac{\theta}{x}\right) - h \gamma_{C}(t)H(x-x_{min}),
\end{equation}
where $a$ is a parameter representing the velocity of the growth, $\tt$ the carrying capacity of the environment, and the term $h\gamma_{C}(t)H(x-x_{min})$ stands for the effect of a cytotoxic drug, where $\gamma_C$ is the concentration of the CT, $x_{min}$ is a minimal size for the drug to be effective ($x_{min}\geq 1$) and the function $H$ is a regularization of the Heaviside function (for example $H(t)=1/2+1/2\tanh(t/K)$, with $K$ being a parameter controlling the slope at $0$), in order to avoid regularity issues in the analysis. The idea is now to take $\tt$ as a variable of the time, representing the degree of vascularization of the tumor and called "angiogenic capacity". The variation rate for $\tt$ derived in \cite{folkman} is :
\begin{equation}\label{g2}
\frac{d\theta}{dt} = g_2(t,x,\tt)=cx-d\tt {x}^{\frac{2}{3}} - e \gamma_{A}(t)H(\tt - \tt_{min}),
\end{equation}
where the terms $cx$ and $-d\tt x^{2/3}$ represent respectively the endogenous stimulation and inhibition of the vasculature and $e \gamma_{A}(t)H(\tt-\tt_{min})$ is the effect of an anti-angiogenic drug. The factor $2/3$ comes from the analysis of \cite{folkman} which concluded that the ratio of the stimulation rate over the inhibition one should be homogeneous to the tumoral radius to the square. 
In the figure \ref{PlansPhase}, we present some numerical simulations of the phase plan of the system \eqref{g1}, \eqref{g2}.

\begin{figure}[!h]\begin{center}
    \includegraphics[width=6cm]{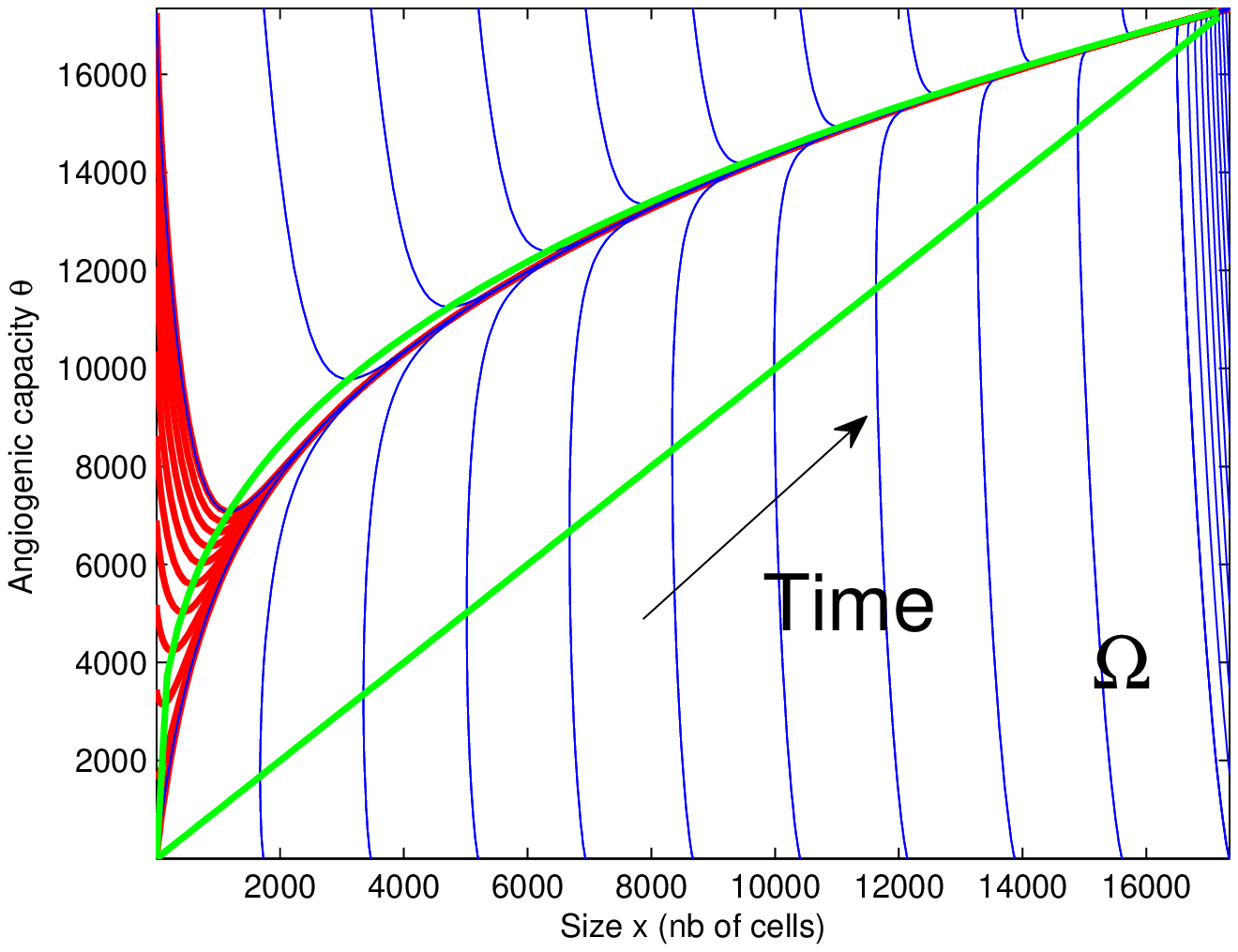} \rotatebox[origin=c]{0}{A}\hskip 0.9cm
\includegraphics[width=6cm]{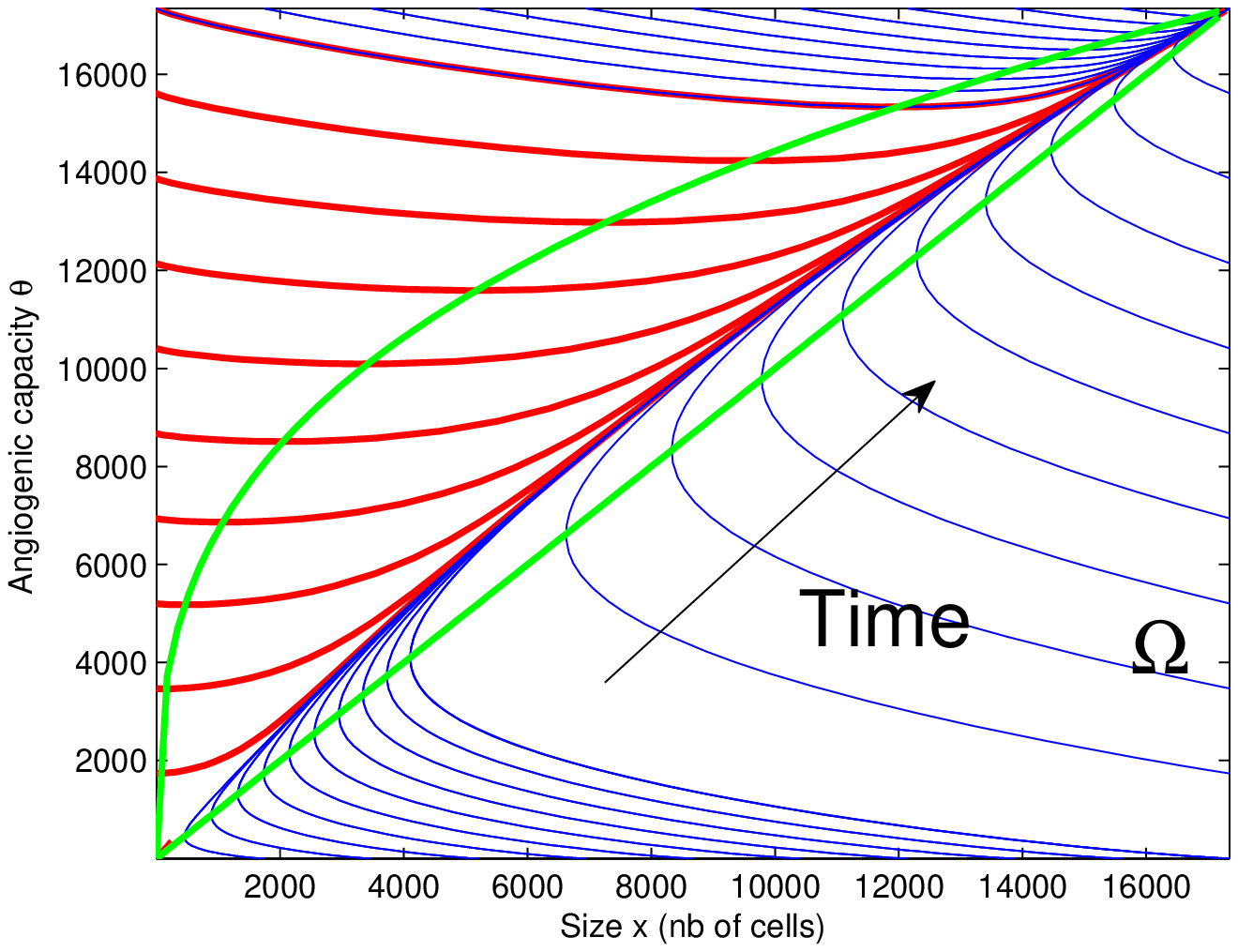} \rotatebox[origin=c]{0}{B}
    \end{center}
\caption{Two phase plans of the system \eqref{g1}-\eqref{g2}, for different values of the parameters, without treatment ($h=e=0$). In green, the nullclines. In both, $b:=\left(\frac{c}{d}\right)^{\frac{3}{2}}=17347$. A. Parameters from \cite{folkman} : $a=0.192,\;c=5.85,\;d=8.73\times 10^{-3}$. B. $a=0.192,\;c=0.1,\;d=1.4923\times 10^{-4}$\label{PlansPhase}}\end{figure}

Following \cite{folkman}, we assume a one compartmental pharmacokinetic for the AA and do the same for the CT (in \cite{folkman} there is no CT). We also assume that the drugs are administered as boli. This gives
$$\gamma_{A}(t)=\sum_{i=1}^N D_{A}e^{-clr_{A}(t-t_i^A)}H(t-t_i^A)$$
where the $t_i^A$ are the administration times of the AA, $D_{A}$ is the administered dose and $clr_{A}$ the clearance. The expression for the CT is the same, with C instead of A.

\subsection{Renewal equation for the density of metastasis}\label{equationRenouv}

We denote $X=(x,\tt)$ and $G(t,X)=\left(g_1(t,x,\tt), g_2(t,x,\tt)\right)$. We define $b=\left(\frac{c}{d}\right)^{\frac{3}{2}}$ and $\Om=(1,b)\times (1,b)$
where $b$ is the maximal reachable size and angiogenic capacity for $(x,\tt)$ solving the system \eqref{g1},\eqref{g2} with initial size $1$ (see \cite{dOnofrio_Gandolfi} for a study of this system without the CT term). We consider that each tumor is a particle evolving in $\Om$ with the velocity $G$. Writing a balance law for the density $\rho(t,X)$ we have
$$\d_t \rho + \div(\rho G)=0,\quad \forall (t,X)\in ]0,T[\times\Om$$
that we endow with an initial condition $\rho^0 \in L^\infty(\Om)$. \\
\espace Metastasis do not only grow in size and angiogenic capacity, they are also able to emit new metastasis. We denote by
$\mathbf{b}(\sig,x,\tt)$ the birth rate of new metastasis with size and angiogenic capacity $\sig\in \d\Om$ by metastasis of size $x$
and angiogenic capacity $\tt$, and by $f(t,\sig)$ the term corresponding to metastasis produced by the primary tumor. Expressing
the equality between the number of metastasis arriving in $\Om$ per unit time (l.h.s in the following equality) and the total rate of new metastasis created by both the primary tumor and metastasis themselves (r.h.s.), we should have for all $t>0$
\begin{equation}\label{condLimInt}
-\int_{\d\Om}\rho(t,\sig)G(t,\sig)\cdot\nu d\sig=\int_{\d\Om}\int_{\Om}\mathbf{b}(\sig,X)\rho(t,X)dX + f(t,\sig) d\sig.
\end{equation}
We assume that the emission rate of the primary and secondary tumors are equal and thus take $f(t,\sig)=\mathbf{b}(\sig,X_p(t))$ where $X_p(t)$ represents the primary tumor and solves the ODE system \eqref{g1}-\eqref{g2}. We also assume that the new metastasis created have size $x=1$ and that there is no metastasis of maximal size $b$ nor maximal or minimal angiogenic capacity because they should come from metastasis outside of $\Om$ since $G$ points inward all along $\d\Om$. An important feature of the model is to assume that the vasculature of the neo-metastasis is independent from the one which emitted it. This means that $\mathbf{b}(\sig,X)=N(\sig)\beta(x,\tt)$ with $N(\sig)$ having its support in $\{\sig\in \d\Om; \;\sig=(1,\tt),\,1\leq\tt\leq b\}$ and describing the angiogenic distribution of the metastasis at birth. We assume it to be uniformly centered around a mean value $\tt_0$, thus we take $N(1,\tt)=\frac{1}{2\Delta \tt}\mathbf{1}_{\tt\in[\tt_0 - \Delta \tt,\tt_0 + \Delta \tt]}$, with $\Delta \tt$ a parameter of dispersion of the new metastasis around $\tt_0$. Following the modeling of \cite{iwata} for the colonization rate $\beta$ we take 
$$\beta(x,\tt)=m x^\alpha,$$
with $m$ the colonization coefficient and $\alpha$ the so-called fractal dimension of blood vessels infiltrating the tumor. The parameter $\alpha$ expresses the geometrical distribution of the vessels in the tumor. For example, if the vasculature is superficial then $\alpha $ is assigned to $2/3$ thus making $x^\alpha$ proportional to the area of the surface of the tumor (assumed to be spheroidal). Else if the tumor is homogeneously vascularised, then $\alpha$ is supposed to be equal to 1. Assuming the equality of the integrands in \eqref{condLimInt} in order to have the equality of the integrals, we obtain the boundary condition of \eqref{equationNonAutonome}.

\section{Analysis at the continuous level}
In the autonomous case, that is when $G$ depends only on $X$ and there is no treatment, the analysis of the equation \eqref{equationNonAutonome} has been performed in \cite{benzekry}. It was proven the existence, uniqueness, regularity and asymptotic behavior of solutions. We present now some analysis on the equation \eqref{equationNonAutonome} with a more general growth field $G$ than the one defined in the section \ref{equationRenouv}.\\
\espace Let $\Om$ be a bounded domain in $\R^2$, with $\d\Om$ being piecewise $\C^1$ except in a finite number of points. Let $G:\R\times \barre{\Om}\rightarrow \R^2$ be a $\C^1$ vector field. We make the following assumption on $G$ : 
\begin{equation}\label{hypG}
\exists \, \delta>0,\; \Gn(t,\sig)\geq \delta>0 \quad \forall \;0\leq t \leq T,\;\sig \in\d\Om.
\end{equation}
We do the following assumptions on the data :
\begin{equation}\label{hypDonnees}
 \rho^0 \in L^\infty(\Om),\;\beta\in L^\infty(\Om),\;N\in L^\infty(\d\Om),\,N\geq 0,\,\int_{\d\Om}N(\sig)d\sig=1,\; f\in L^\infty(]0,T[\times \d\Om).
\end{equation}

\begin{rem} In the case of $G$ being the one of the section \ref{equationRenouv} if there is no treatment (that is, if $e=h=0$, or $t\leq t_1$) then we don't have $\Gn(t,\sig)\geq m >0$ all along the boundary since $G$ vanishes at the point $(b,b)$. But since the problem was solved in this case (see \cite{benzekry}) we consider that the time $0$ is the starting time of the treatment and that $e$ or $h$ is positive, which makes the assumption \eqref{hypG} true.\\
\end{rem}

\begin{defn}[Weak solution]
  We say that $\rho \in L^\infty(]0,T[\times \Om)$ is a weak solution of the problem \eqref{equationNonAutonome} if for all test function $\phi$ in $\C^1([0,T]\times\bOm)$ with $\phi(T,\cdot)=0$
\begin{align}\label{formulationFaible}
 \int_0^T \int_\Om  \rho(t,X)  \left[\d_t \phi(t,X)   + G(t,X)\cdot\nabla\phi(t,X)\right]dXdt  & + \int_\Om \rho^0(X)\phi(0,X)dX \notag\\
  & + \int_0^T\int_{\d\Om}\left\lbrace N(\sig)B(t,\rho) + f(t,\sig)\right\rbrace\phi(t,\sig)=0
\end{align}
where we denoted $B(t,\rho):=\int_\Om \beta(X)\rho(t,X)dX$.
\end{defn}

\begin{rem}\label{remTestLip}
 By approximating a Lipschitz function by $\C^1$ ones, it is possible to prove that the definition of weak solutions would be equivalent with test functions in $W^{1,\infty}([0,T]\times\bOm)$ vanishing at time $T$.
\end{rem}

\subsection{Change of variables}
Let $X(t;\tau,\sig)$ be the solution of the differential equation
\begin{equation}\label{EDO}
\left\lbrace\begin{array}{c}\frac{d}{d t}X=G(t,X)\\
X(\tau;\tau,\sig)=\sig \end{array}\right. .
\end{equation}
For each time $t>0$, we define the entrance time $\tau^t(X)$ and entrance point $\sig^t(X)$ for a point $X\in \Om$ :
$$\tau^t(X):=\inf\{0\leq\tau\leq t;\;X(\tau;t,X)\in \Om\},\;\sig^t(X):=X(\tau^t(X);t,X).$$
We consider the sets 
$$\Om_1^t=\{X\in \Om;\;\tau^t(X)> 0\},\;\Om_2^t=\{X\in \Om;\;\tau^t(X)= 0\}$$
and
$$Q_1:=\{(t,X)\in[0,T]\times\bOm;\;X\in\bOm_1^t\},\;Q_2:=\{(t,X)\in[0,T]\times \bOm;\;X\in \bOm_2^t\}.$$
We also define $\tilda{Q_1}:=\{(t,\tau,\sig);\;0\leq \tau\leq t \leq T,\;\sig\in\d\Om\}=X^{-1}(Q_1)$ and notice that $$\Sigma_1:=[0,T]\times\d\Om=\{(t,X);\;\tau^t(X)=0\},\text{ and } \Sigma_2=\{(t,X(t;0,\sig));\;0\leq t \leq T,\;\sig\in\d\Om\}=\{(t,X);\;\tau^t(X)=0\}.$$
See figure \ref{redressCaract} for an illustration. We can now introduce the changes of variables that we will constantly use in the sequel.
\begin{prop}[Change of variables]\label{propChgtVar}
 The maps
$$X_1:\begin{array}{ccc} \tQ_1 & \rightarrow & Q_1 \\
		       (t,\tau,\sig) & \mapsto      & X(t;\tau,\sig)
  \end{array}
\text{ and }
X_2 : \begin{array}{ccc} [0,T]\times\bOm & \rightarrow & Q_2 \\
		       (t,Y) & \mapsto      & X(t;0,Y)
  \end{array}
$$
are bilipschitz. The inverse of $X_1$ is $(t,X)\mapsto(t,\tau^t(X),\sig^t(X))$ and the inverse of $X_2$ is $(t,X)\mapsto (t,Y(X))$ with $Y(X)=X(0;t,X)$. Denoting $J_1(t;\tau,\sig)=\left|\det(DX_1)\right|$ and $J_2(t;Y)=\left|\det(DX_2)\right|$, we have :
\begin{equation}\label{formule_jacobien}
J_1(t;\tau,\sig)=|G(\tau,\sig)\cdot\nuu (\sig)|e^{\int_\tau^t \div \,
G(u,X(u;\tau,\sig))du}\text{ and }J_2(t;Y)=e^{\int_0^t \div \,
G(u,X(u;0,Y))du}
\end{equation}
\end{prop}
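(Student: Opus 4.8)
The plan is to establish the regularity of the characteristic flow, then use the transversality hypothesis \eqref{hypG} to control the entrance maps, and finally compute the Jacobians by Liouville's formula.

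First I would record the regularity of the flow. Since $G$ is $\C^1$ on $\R\times\bOm$, the Cauchy--Lipschitz theorem together with the smooth dependence of the solutions of \eqref{EDO} on the time, the initial time and the initial datum shows that $(t,\tau,\sig)\mapsto X(t;\tau,\sig)$ and $(t,Y)\mapsto X(t;0,Y)$ are $\C^1$. On the compact domains $\tQ_1$ and $[0,T]\times\bOm$ the derivative $D_\sig X$ (resp. $D_Y X$) solves the linear variational equation, and Gr\"onwall's lemma bounds it by $e^{L(t-\tau)}$ with $L=\sup\|D_X G\|$; hence $X_1$ and $X_2$ are Lipschitz, and the same estimate yields local invertibility.

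Next I would settle bijectivity and identify the inverses. By uniqueness of the trajectories of \eqref{EDO}, the partition $\bOm=\bOm_1^t\cup\bOm_2^t$ forces $X_1$ to map onto $Q_1$ and $X_2$ onto $Q_2$, and shows that the candidate inverses $(t,X)\mapsto(t,\tau^t(X),\sig^t(X))$ and $(t,X)\mapsto(t,Y(X))$ with $Y(X)=X(0;t,X)$ are well defined. For $X_2$ the inverse $X\mapsto X(0;t,X)$ is $\C^1$ and Lipschitz by the same flow estimates, so $X_2$ is bilipschitz. For $X_1$ the delicate point is the Lipschitz regularity of the entrance time: if $\psi$ is a $\C^1$ function vanishing on $\d\Om$ with $\nabla\psi$ parallel to $\nu$ near $\sig^t(X)$, then $s\mapsto\psi(X(s;t,X))$ has $s$-derivative $\nabla\psi\cdot G$, which by \eqref{hypG} is bounded away from $0$ at $s=\tau^t(X)$; the implicit function theorem then gives that $\tau^t(X)$, and hence $\sig^t(X)=X(\tau^t(X);t,X)$, is $\C^1$ (thus locally Lipschitz) in $(t,X)$. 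The finitely many corners of $\d\Om$ form a negligible set and do not obstruct the change of variables.

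Finally I would compute the Jacobians. Writing $X_2(t,Y)=(t,X(t;0,Y))$, the block-triangular structure gives $\det DX_2=\det\big(D_Y X(t;0,Y)\big)$; since $D_YX$ solves the variational equation with value $\mathrm{Id}$ at $t=0$, Abel--Liouville's formula yields $\det D_YX(t;0,Y)=e^{\int_0^t\div G(u,X(u;0,Y))\,du}$, which is the stated $J_2$. For $X_1(t,\tau,\sig)=(t,X(t;\tau,\sig))$ I parametrize $\d\Om$ by arc length $s$ and use the two classical identities $D_\sig X(t;\tau,\sig)=\Phi(t,\tau)$, the resolvent of the variational equation (so $\det\Phi(t,\tau)=e^{\int_\tau^t\div G(u,X(u;\tau,\sig))\,du}$), and $\d_\tau X(t;\tau,\sig)=-\Phi(t,\tau)G(\tau,\sig)$. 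Expanding the $3\times 3$ determinant along its first row and factoring $\Phi(t,\tau)$ out of the remaining $2\times2$ block gives $J_1=\det\Phi(t,\tau)\cdot\big|\det[\,G(\tau,\sig),\d_s\sig\,]\big|$, and since $\d_s\sig$ is the unit tangent, $|\det[G,\d_s\sig]|=|G(\tau,\sig)\cdot\nu(\sig)|$, which is the claimed $J_1$. The \emph{main obstacle} is the Lipschitz regularity of the entrance map $(t,X)\mapsto(\tau^t(X),\sig^t(X))$, precisely where the transversality \eqref{hypG} is indispensable; the rest is standard ODE theory combined with Liouville's formula.
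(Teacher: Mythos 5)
Your proof is correct and follows the same overall skeleton as the paper (flow regularity, bijectivity with identification of the inverses, Jacobian formula, global Lipschitz bounds), but two steps are executed by genuinely different means. For the Jacobian $J_1$, you factor the resolvent $\Phi(t,\tau)=D_YX(t;\tau,\cdot)$ out of the $2\times 2$ block $[\d_\tau X,\d_\sig X]=\Phi(t,\tau)[-G(\tau,\sig),\sig']$ and apply Abel--Liouville, whereas the paper derives the scalar ODE $\d_t(\d_\tau X_1^t\wedge\d_\sig X_1^t)=\div(G)\,(\d_\tau X_1^t\wedge\d_\sig X_1^t)$ and computes the ``initial'' value at $\tau=t$ by a difference-quotient argument showing $\d_\tau X_1^t(t;t,\sig)=-G(t,\sig)$; your route is arguably cleaner, and the key identity $\d_\tau X=-D_YX\circ G(\tau,\sig)$ that it rests on is exactly the one the paper records in the remark following its proof (formula \eqref{derivee_tau_Phi}), so nothing new is needed. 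For the regularity of the inverse of $X_1$, you apply the implicit function theorem directly to the entrance time via a boundary defining function $\psi$, using \eqref{hypG} to keep $\nabla\psi\cdot G$ away from zero, while the paper instead invokes the local inversion theorem once the Jacobian is known to be nonvanishing and then bounds $(DX_1^t)^{-1}=J_1^{-1}\,{}^t\mathrm{Com}(DX_1^t)$ uniformly, again via \eqref{hypG}; both deliver the uniform bound needed for the \emph{global} Lipschitz property, which you should state explicitly rather than only ``locally Lipschitz.'' One point you pass over too quickly is injectivity of $X_1$: backward uniqueness of trajectories alone is not enough, since one must rule out a trajectory that enters at $(\tau',\sig')$ and later returns to a boundary point $\sig$ at time $\tau>\tau'$; the paper's argument is precisely that $X(t;\tau,\sig)=X(t;\tau',\sig')$ with $\tau'<\tau$ would force $\sig=X(\tau;\tau',\sig')$, which the strict transversality \eqref{hypG} forbids. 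So \eqref{hypG} is indispensable for injectivity as well, not only for the regularity of the entrance map as you state.
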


We refer to the appendix for the proof of this result and to the figure \ref{redressCaract} for an illustration.

\begin{figure}
 \input{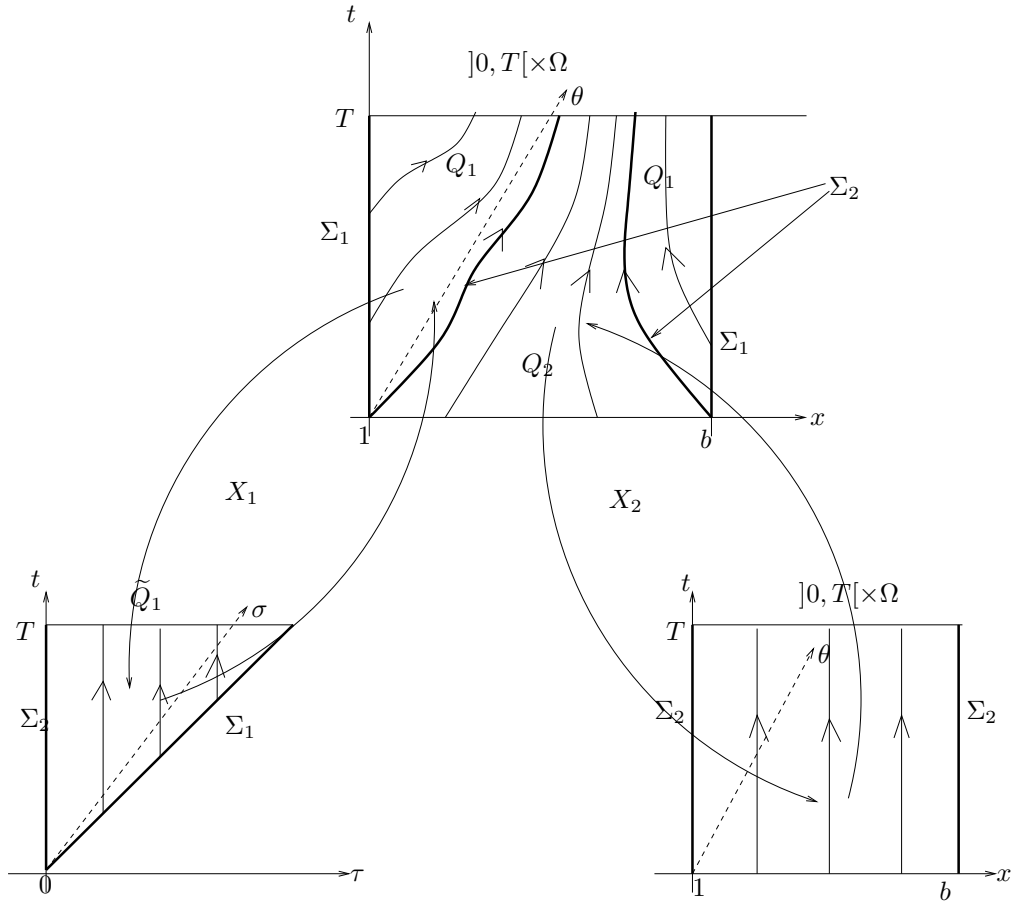}
\caption{The two changes of variables $X_1$ and $X_2$ (represented only on the plane $\tt=1$).}\label{redressCaract}
\end{figure}

Using these changes of variables we can write for a function $f\in L^1(]0,T[\times\Om)$
$$\int_0^T\int_\Om f(X)dX=\int_0^T\int_0^t\int_{\d\Om}f(X_1(t;\tau,\sig))J_1(t;\tau,\sig)d\sig d\tau + \int_0^T\int_\Om f(X_2(t;0,Y))J_2(t;Y)dY.$$
We want to decompose the equation \eqref{equationNonAutonome} into two subequations : one for the contribution of the boundary term and one for the contribution of the initial condition since they are ``independent''. Defining
\begin{equation}\label{expressionTrho12}
\trho_1(t,\tau,\sig):=\rho(t,X(t;\tau,\sig))J_1(t;\tau,\sig)\text{ and } \trho_2(t,y):=\rho(t,X(t;0,Y))J_2(t;Y) 
\end{equation}
we have, when the solution is regular : $\d_t\trho_1=\left(\d_t\rho+\div\left(\rho G\right)\right)J_1=0$ and the same for $\trho_2$. It is thus natural to introduce the following equations 
\edp{equationTrho1}{ll}{
\d_t\trho_1(t,\tau,\sig) =0 & 0<\tau\leq t <T,\;\sig\in\d\Om  \\
\trho_1(\tau,\tau,\sig)=N(\sig)\tB(t,\trho_1,\trho_2)+f(t,\sig) & 0<\tau<T,\;\sig\in\d\Om
}
where we denoted 
$$\tB(t,\trho_1,\trho_2)=\int_0^t\int_{\d\Om}\beta(X(t;\tau,\sig))\trho_1(t,\tau,\sig)d\sig d\tau + \int_\Om \beta(X(t;0,Y))\trho_2(t,Y)dY,$$
and
\edp{equationTrho2}{ll}{
\d_t\trho_2 =0 & t>0,\;Y\in\Om \\
\trho_2(0;Y)=\rho^0(Y) & Y\in \Om
.}

We precise the definition of weak solutions to these equations.
\begin{defn}\label{defSolRho1Rho2}
 We say that a couple $(\trho_1,\trho_2) \in L^\infty(\tQ_1)\times L^\infty(]0,T[\times \Om)$ is a weak solution of the equations \eqref{equationTrho1}-\eqref{equationTrho2} if for all $\tphi_1 \in \C^1(\tQ_1)$ with $\tphi_1(T,\cdot)=0$ we have :
\begin{align}\label{formulationFaibleTrho1}
 \int_0^T\int_0^t\int_{\d\Om}\trho_1(t,\tau,\sig)\d_t\tphi_1(t,\tau,\sig)d\sig d\tau dt +\int_0^T\int_{\d\Om}\left\lbrace N(\sig)\tB(t,\trho_1,\trho_2)+f(t,\sig)\right\rbrace\tphi_1(t,t,\sig)  =0
,\end{align}
and for all $\tphi_2 \in \C^1([0,T]\times\bOm)$ with $\tphi_2(T,\cdot)=0$ we have
\begin{equation}\label{formulationFaibleTrho2}
 \int_0^T\int_\Om \trho_2(t,Y)\d_t\tphi_2(t,Y)dt + \int_\Om \rho^0(Y)\tphi_2(0,Y)dY =0.
\end{equation}
\end{defn}
\begin{rem}
 If $\trho_1$ is a regular function which solves \eqref{equationTrho1}, then the weak formulation is satisfied since we have :
\begin{align*}
\int_0^T\int_0^t \int_{\d\Om} & \trho_1(t,\tau,\sig)\d_t\tphi_1(t,\tau,\sig)d\sig d\tau dt =   \underset{=0}{\underbrace{\int_0^T\int_{\d\Om}\tphi_1(T,\tau,\sig)\trho_1(T,\tau,\sig)d\tau d\sig}} \\
 & -  \int_0^T\int_0^t \tphi_1(t,\tau,\sig)\d_t \trho_1(t,\tau,\sig)d\sig d\tau dt - \int_0^T\int_{\d\Om}\tphi_1(t,t,\sig)\trho_1(t,t,\sig)d\sig dt.
\end{align*}
\end{rem}

We prove now the following theorem, establishing the equivalence between the problem \eqref{equationNonAutonome} and the problem \eqref{equationTrho1}-\eqref{equationTrho2}.

\begin{thr}[Equivalence between problem \eqref{equationNonAutonome} and problem \eqref{equationTrho1}-\eqref{equationTrho2}]\label{propRhoRho1Rho2}
Let $\rho \in L^\infty(]0,T[\times \Om)$ be a weak solution of the equation \eqref{equationNonAutonome}. Then $(\trho_1,\trho_2)$ given by \eqref{expressionTrho12} is a weak solution of \eqref{equationTrho1}-\eqref{equationTrho2}.
Conversely, if $\trho_1$ and $\trho_2$ are weak solutions of \eqref{equationTrho1} and \eqref{equationTrho2}, then the function defined by \begin{equation}\label{rhoRho1Rho2}
\rho(t,X):=\trho_1(t,\tau^t(X),\sig^t(X))J_1^{-1}(t,\tau^t(X),\sig^t(X))\mathbf{1}_{X\in\Om_1^t} + \trho_2(t,Y(X))J_2^{-1}(t,Y(X))\mathbf{1}_{X\in\Om_2^t}                                                                                                                                            
\end{equation}
is a weak solution of \eqref{equationNonAutonome}.
\end{thr}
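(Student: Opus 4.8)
The proof reduces entirely to one change-of-variables identity, so my plan is to establish that identity first and then read off both implications. For an admissible test function $\phi$ (vanishing at $t=T$) set $\tphi_1:=\phi\circ X_1$ and $\tphi_2:=\phi\circ X_2$. Differentiating along the flow and using the characteristic equation \eqref{EDO} gives the pointwise identity $\d_t\tphi_1=(\d_t\phi+G\cdot\nabla\phi)\circ X_1$ and the analogous one for $\tphi_2$; this is the heart of why straightening the characteristics turns the transport operator into a plain $\d_t$. Splitting $\Om=\Om_1^t\cup\Om_2^t$ and using Proposition \ref{propChgtVar} together with the Jacobians \eqref{formule_jacobien}, the bulk term $\int_0^T\int_\Om\rho\,[\d_t\phi+G\cdot\nabla\phi]$ of \eqref{formulationFaible} becomes exactly $\int_{\tQ_1}\trho_1\,\d_t\tphi_1+\int_0^T\int_\Om\trho_2\,\d_t\tphi_2$, with $\trho_1,\trho_2$ as in \eqref{expressionTrho12}. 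The same computation gives $B(t,\rho)=\tB(t,\trho_1,\trho_2)$; since a boundary point has entrance time $\tau=t$ one has $\phi|_{\d\Om}=\tphi_1(t,t,\sig)$, and at $t=0$ one has $\phi(0,\cdot)=\tphi_2(0,\cdot)$. Collecting everything, \eqref{formulationFaible} for $\phi$ is identically $A(\tphi_1)+C(\tphi_2)=0$, where I write $A(\tphi_1)$ and $C(\tphi_2)$ for the left-hand sides of \eqref{formulationFaibleTrho1} and \eqref{formulationFaibleTrho2}.

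Granting this, the converse is immediate: if $\trho_1,\trho_2$ solve \eqref{equationTrho1}--\eqref{equationTrho2}, then for every $\phi$ the pullbacks $\tphi_1=\phi\circ X_1,\ \tphi_2=\phi\circ X_2$ are admissible (Lipschitz and vanishing at $T$ by the bilipschitz regularity of $X_1,X_2$ and the analogue of Remark \ref{remTestLip} for the two subproblems), so $A(\tphi_1)=C(\tphi_2)=0$ and \eqref{formulationFaible} holds. One only has to check that inserting \eqref{rhoRho1Rho2} into the change of variables reproduces $\trho_1,\trho_2$ and the identity $\tB=B$, which is a direct Jacobian cancellation.

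The forward implication is the real work, because I must decouple $A$ and $C$: the identity controls only the sum over pairs $(\tphi_1,\tphi_2)$ arising from a single continuous $\phi$, i.e.\ satisfying the compatibility $\tphi_1(t,0,\sig)=\tphi_2(t,\sig)$ along $\Sigma_2$. The device is to feed in test functions supported in a single region. Given $\tphi_1$ with $\tphi_1|_{\tau=0}=0$, the map equal to $\tphi_1\circ X_1^{-1}$ on $Q_1$ and to $0$ on $Q_2$ is continuous across $\Sigma_2$ and Lipschitz, hence admissible by Remark \ref{remTestLip}, and produces $A(\tphi_1)=0$; symmetrically $C(\tphi_2)=0$ whenever $\tphi_2|_{\d\Om}=0$. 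Taking $\tphi_1$ supported in the interior of $\tQ_1$ yields $\d_t\trho_1=0$; reinserting this and integrating by parts in $t$ (the $t=T$ term vanishes) identifies the diagonal trace as $\trho_1(t,t,\sig)=N\tB+f$. These two facts give \eqref{formulationFaibleTrho1} for an \emph{arbitrary} $\tphi_1$ after one integration by parts, the value of $\tphi_1$ at $\tau=0$ dropping out precisely because $\d_t\trho_1=0$. The same scheme for $\trho_2$ gives $\d_t\trho_2=0$ and then $\trho_2(0,\cdot)=\rho^0$, whence \eqref{formulationFaibleTrho2}.

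The points I expect to cost the most care are all about admissibility and richness of the test functions rather than the algebra. The pullbacks and the extensions-by-zero are only Lipschitz, with kinks along $\Sigma_2$ and at the finitely many corners of $\d\Om$ where the flow and the entrance map $\sig^t$ fail to be $\C^1$, so one must systematically invoke Remark \ref{remTestLip} and approximate. One must also verify that the families of admissible $\tphi_1,\tphi_2$ realize enough diagonal traces $\tphi_1(t,t,\sig)$ and initial traces $\tphi_2(0,Y)$ to conclude the pointwise identities $\trho_1(t,t,\cdot)=N\tB+f$ and $\trho_2(0,\cdot)=\rho^0$ almost everywhere; the constraints $\tphi_1|_{\tau=0}=0$ and $\tphi_2|_{\d\Om}=0$ only touch these traces on a negligible set (the corner $t=\tau=0$, resp.\ $\d\Om$), so this causes no loss. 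The change-of-variables bookkeeping itself is routine once Proposition \ref{propChgtVar} is available, since every integrand is a product of a bounded function with a $\C^1$ one.
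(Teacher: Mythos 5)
Your proof is correct, and it shares the paper's overall architecture: both directions rest on the change of variables of Proposition \ref{propChgtVar}, on the identity $\d_t(\phi\circ X_s)=(\d_t\phi+G\cdot\nabla\phi)\circ X_s$, and on manufacturing admissible Lipschitz test functions on $[0,T]\times\bOm$ from test functions living on one of the two sub-domains; the reverse implication is treated identically in both. The genuine difference is the device used in the forward implication to decouple the two sub-equations across the interface $\Sigma_2$. The paper fixes an \emph{arbitrary} test function $\tphi_2$ for \eqref{equationTrho2}, extends it to $Q_1$ by $\tphi_1^\eps(t,\tau,\sig)=\tphi_2(t,\sig)\zeta_\eps(\tau)$ with a cutoff supported in $\{\tau\leq\eps\}$, and shows that the $Q_1$-contribution $I_1^\eps$ vanishes as $\eps\to 0$, so the limit yields the weak formulation for $\trho_2$ directly for every $\tphi_2$ (and similarly for $\trho_1$). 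You instead extend by zero, which forces the compatibility constraints $\tphi_1|_{\tau=0}=0$, resp.\ $\tphi_2|_{\d\Om}=0$, and you must then argue that these constrained classes still determine the weak formulations; you do so by first extracting the strong relations $\d_t\trho_1=0$, $\trho_1(t,t,\cdot)=N\tB+f$, $\d_t\trho_2=0$, $\trho_2(0,\cdot)=\rho^0$ and reassembling by one integration by parts in $t$. Both devices are sound: the paper's costs an $\eps$-limit per test function but needs no richness argument, while yours avoids the limit but shifts the work to the (correct) observation that the constraints only affect the diagonal trace $\tphi_1(t,t,\sig)$ and the initial trace $\tphi_2(0,\cdot)$ on null sets. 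Isolating the master identity ``\eqref{formulationFaible} equals your $A(\tphi_1)+C(\tphi_2)$'' is a small expository gain, since it makes the reverse implication a one-liner and makes explicit that the only obstruction in the forward direction is the compatibility condition along $\Sigma_2$; the admissibility caveats you flag (Lipschitz rather than $\C^1$ pullbacks and extensions, handled via Remark \ref{remTestLip} and its analogue for the sub-problems) are exactly the ones the paper itself relies on.
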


\begin{proof}~\\
\espace $\bullet$\textit{ Direct implication.} Let $\rho$ be a weak solution of the equation \eqref{equationNonAutonome}. We will prove that $\trho_2$ defined by \eqref{expressionTrho12} solves \eqref{equationTrho2}. Let $\tphi_2\in\C^1([0,T]\times \bOm)$ with $\tphi_2(T,\cdot)=0$. We define for $X\in Q_2$ $\phi_2(t,X):=\tphi_2(t,Y(X)) \in \Winf(Q_2)$ and we intend to extend it in a Lipschitz function of $[0,T]\times\bOm$ so that we can use it as a test function in the weak formulation for $\rho$ (see remark \ref{remTestLip}). We define, for $(t,\tau,\sig)\in \tQ_1$, $\tphi_1^\eps(t,\tau,\sig)=\tphi_2(t,\sig)\zeta_\eps(\tau)$
with $\zeta_\eps(\tau)$ being a truncature function in $\C^1([0,+\infty[)$ such that $0\leq \zeta_\eps\leq 1,\;\zeta_\eps(0)=1,\;\zeta_\eps(\tau)=0$ for $\tau\geq \eps$. Then $\tphi_1^\eps \in W^{1,\infty}(\tQ_1)$ and we set $\phi_1^\eps(t,X):=\tphi_1^\eps(t,\tau^t(X),\sig^t(X))\in\Winf(Q_1)$ since $\tau^t(X)$ and $\sig^t(X)$ are Lipschitz from proposition \ref{propChgtVar}. We define then
$$\phi^\eps:=\left\lbrace\begin{array}{l}
   \phi_1^\eps \text{ on }Q_1 \\
  \phi_2 \text{ on }Q_2
  \end{array}\right. .
$$
The function $\phi^\eps$ is Lipschitz on $Q_1$, Lipschitz on $Q_2$ and $\phi^\eps\in\C([0,T]\times\bOm)$ since $Q_1\cap Q_2=\{(t,X);\,\tau^t(X)=0\}$. Thus $\phi^\eps\in \Winf([0,T]\times\bOm)$ with $\phi^\eps(T,\cdot)=0$. Using $\phi^\eps$ as a test function in \eqref{formulationFaible}, we have
\begin{align*}
\int_{Q_1} & \rho[\d_t\phi_1^\eps + G\cdot\nabla \phi_1^\eps]dXdt + \int_0^T\int_{\d\Om}\left\lbrace N(\sig) B(t,\rho)+f(t,\sig)\right\rbrace\phi_1^\eps(t,\sig)dtd\sig \\
 & + \int_{Q_2}\rho[\d_t\phi_2 + G\cdot \nabla\phi_2]dXdt + \int_\Om \rho^0(X)\phi_2(0,X)dX=0=I^1_\eps + I_2.
\end{align*}
By doing the change of variables $X_1$ in the term $I_1^\eps$ and noticing that $\phi_1^\eps(t,\sig)=\tphi_1^\eps(t,t,\sig)=\tphi_2(t,\sig)\zeta_\eps(t)$, we obtain 
$$I_1^\eps=\int_0^T\int_0^t\trho_1(t,\tau,\sig) \d_t\tphi_1(t,\sig)\zeta^\eps(\tau)d\sig d\tau dt + \int_0^T\int_{\d\Om}B(t,\rho)\tphi_2(t,\sig)\zeta^\eps(t)d\sig \xrightarrow[\eps\rightarrow 0]{}0.$$
Now doing the change of variables $X_2$ in the second term $I_2$ and noticing that $\d_t \tphi_2(t,Y)=\d_t(\phi_2(t,X(t;0,Y)))=\d_t\phi_2(t,X(t;0,Y))+G(t,X(t;0,Y))\cdot\nabla\phi_2(t,X(t;0,Y))$ gives the result. The equation on $\trho_1$ is proved in the same way.\\
\espace $\bullet$ \textit{Reverse implication.} Let $\trho_1$ and $\trho_2$ be solutions of \eqref{equationTrho1} and \eqref{equationTrho2} respectively. Define $\rho(t,X)$ by \eqref{rhoRho1Rho2}, and consider a test function $\phi \in \C^1([0,T]\times \bOm)$ with $\phi(T,\cdot)=0$. Then $\phi_1:=\phi_{|Q_1} \in \C^1(Q_1)$, with $\phi_1(T,\cdot)=0$, thus $\tphi_1(t,\tau,\sig):=\phi_1(t,X_1(\tau,\sig))$ is valid as a test function in the weak formulation of \eqref{equationTrho1}. In the same way $\tphi_2(t,Y):=\phi_2(t,X_2(Y))$ with $\phi_2:=\phi_{|Q_2}$ is valid as a test function for \eqref{equationTrho2}. Thus we have
\begin{align*}
\int_{\tilda{Q}_1} & \trho_1(t,\tau,\sig)\d_t\tphi_1(t,\tau,\sig) d\sig d\tau dt + \int_0^T\int_{\d\Om}\tB(t,\trho_1,\trho_2) \tphi_1(t,t,\sig)d\sig dt \\
& + \int_0^T\int_\Om \trho_2(t,y) \d_t\tphi_2(t,y)dtdy + \int_\Om \rho^0(y)\tphi_2(0,y)dy = 0
\end{align*}
Doing the changes of variables gives the weak formulation of \eqref{equationNonAutonome}.
\end{proof}

This theorem simplifies the structure of the problem \eqref{equationNonAutonome}. In some sense, it formalizes the method of characteristics in the framework of weak solutions for our problem. The characteristics are straightened (see figure \ref{redressCaract}) and the directional derivative along the field $(t,G)$ is transformed in only a time derivative. Moreover, integrating the jacobians (which contains the transformation of areas) in the definitions of $\trho_1$ and $\trho_2$, these functions are constant in time. The continuous analysis and discrete approximation of the problem \eqref{equationNonAutonome} is thus simplified.

\subsection{\textit{A priori} continuous estimates and uniqueness}

In order to obtain \textit{a priori} properties on the solutions of the equation, we will use the theory of renormalized solutions first initiated by DiPerna-Lions \cite{dipernalions} in the case of $\R^n$ and further developed by Boyer \cite{franck} in the case of a bounded domain. Let us first recall the result that we will use, which can be found in \cite{franck}. We need to introduce the following measure on $]0,T[\times \d\Om$ : $d\mu_{G}=(G\cdot \nu)dtd\sig$

\begin{prop}[Renormalization property]\label{propRenormalisation}
 Let $\rho\in L^\infty(]0,T[\times\Om)$ be a solution, in the distribution sense, to the equation :
\begin{equation}\label{equationTransport}
\d_t\rho + \div(\rho G)=0.
\end{equation}
\begin{enumerate}
 \item[(i)] The function $\rho$ lies in $\C([0,T];L^p(\Om))$, for any $1\leq p <\infty$. Furthermore, $\rho$ is continuous in time with values in $L^\infty(\Om)$ weak-$\ast$.
\item[(ii)] There exists a function $\gamma\rho \in L^\infty(]0,T[\times \d\Om;|d\mu_G|)$ such that for any $h \in \C^1(\R)$, for any test function $\phi\in \C^1([0,T]\times \barre{\Om})$, and for any $[t_0,t_1]\subset[0,T]$, we have
\begin{align}\label{renormalisation_trace}
 \int_{t_0}^{t_1}\int_\Om h(\rho)(\d_t \phi & + \div(G\phi))dtdX + \int_\Om h(\rho(t_0))\phi(t_0)dX - \int_\Om h(\rho(t_1))\phi(t_1)dX  \notag\\
  & -\int_{t_0}^{t_1}\int_{\d\Om}h(\gamma\rho)\phi \Gn dtd\sig - \int_{t_0}^{t_1}\int_\Om h'(\rho)\rho\div(G)\phi dtdX =0
\end{align}
\end{enumerate}
\end{prop}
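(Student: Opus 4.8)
The plan is to follow the renormalization strategy of DiPerna--Lions \cite{dipernalions}, in the simplified setting where $G\in\C^1$, and then to handle the boundary contribution as in Boyer \cite{franck}.

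First I would establish the interior renormalized identity by mollification. Set $\rho_\eps:=\rho*\eta_\eps$ with $\eta_\eps$ a standard kernel; convolving \eqref{equationTransport} gives, on any compact subset of $\Om$,
$$\d_t\rho_\eps+\div(G\rho_\eps)=r_\eps,\qquad r_\eps:=\div(G\rho_\eps)-\bigl(\div(G\rho)\bigr)*\eta_\eps.$$
The commutator lemma, which is straightforward when $G\in\C^1$ and $\rho\in L^\infty$, shows $r_\eps\to 0$ in $L^1_{loc}(]0,T[\times\Om)$. Since $\rho_\eps$ is smooth, the classical chain rule applies:
$$\d_t h(\rho_\eps)+\div\bigl(Gh(\rho_\eps)\bigr)=h(\rho_\eps)\div G-h'(\rho_\eps)\rho_\eps\div G+h'(\rho_\eps)r_\eps.$$
As $\rho$ is bounded, $h'(\rho_\eps)$ stays bounded along the range of $\rho$, so $h'(\rho_\eps)r_\eps\to 0$ in $L^1_{loc}$ and letting $\eps\to 0$ yields the renormalized equation
$$\d_t h(\rho)+\div\bigl(Gh(\rho)\bigr)=\bigl(h(\rho)-h'(\rho)\rho\bigr)\div G$$
in $\mathcal{D}'(]0,T[\times\Om)$. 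Testing against $\phi$ and integrating by parts in the bulk already produces every interior term of \eqref{renormalisation_trace}.

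For part (i), I would read off from \eqref{equationTransport} that $\d_t\rho=-\div(\rho G)\in L^\infty(0,T;W^{-1,\infty}(\Om))$ since $\rho G\in L^\infty$; together with the uniform bound $\rho\in L^\infty$ this gives weak-$\ast$ continuity in time into $L^\infty(\Om)$. To upgrade to strong $\C([0,T];L^p)$ continuity I would apply the renormalized equation with $h$ a smooth approximation of $s\mapsto|s|^p$: the resulting identity controls $t\mapsto\|\rho(t)\|_{L^p}$ continuously (the boundary and divergence terms being integrable), which combined with the weak continuity yields strong continuity by the standard argument that weak convergence together with convergence of norms implies strong convergence.

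The genuinely delicate content is part (ii): constructing the trace $\gamma\rho\in L^\infty(]0,T[\times\d\Om;|d\mu_G|)$ and showing that renormalization commutes with it, i.e.\ that the trace of $h(\rho)$ is $h(\gamma\rho)$. This is exactly where the transversality hypothesis \eqref{hypG}, $\Gn\geq\delta>0$, is essential: because the space-time field $(1,G)$ crosses $\d\Om$ with a uniform lower bound on its normal speed, the flow foliates a neighbourhood of the boundary and one may use the change of variables of Proposition \ref{propChgtVar} to pull $\rho$ back along characteristics into a density that is constant in time. This transported density has an unambiguous boundary value, forcing the existence of $\gamma\rho$ and the identity $\gamma(h(\rho))=h(\gamma\rho)$. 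I would make this rigorous following Boyer \cite{franck}: approximate $\rho$ by smooth solutions for which the trace identity holds classically, and pass to the limit using the $L^\infty(|d\mu_G|)$ bound on the traces, compactness here again coming from the uniform transversality. The main obstacle throughout is precisely the interaction of the mollification with the boundary: the commutator estimate must be localized near $\d\Om$, and it is the uniform lower bound \eqref{hypG} that controls this localized commutator and delivers the boundary term $-\int_{t_0}^{t_1}\int_{\d\Om}h(\gamma\rho)\phi\,\Gn$ of \eqref{renormalisation_trace}. Integrating the renormalized identity over $[t_0,t_1]$ and collecting the bulk, endpoint and boundary contributions then gives \eqref{renormalisation_trace}.
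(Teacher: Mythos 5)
You should know that the paper itself offers no proof of this proposition: it is explicitly \emph{recalled} from Boyer \cite{franck}, with only a remark that Boyer's argument (stated there for much less regular $G$ but with $\div G=0$) extends to the present setting. So there is no in-paper argument to compare against line by line; your sketch is essentially an outline of the very proof the citation points to, and its overall architecture --- interior renormalization via mollification and the DiPerna--Lions commutator, then a separate construction of the boundary trace --- is the right one. Two points deserve flagging. First, your claim that the transversality hypothesis \eqref{hypG} is \emph{essential} for the existence of $\gamma\rho$ overstates the situation: Boyer constructs the trace in $L^\infty(|d\mu_G|)$ with no transversality at all (the measure $|d\mu_G|$ automatically discounts the tangential part of the boundary); what \eqref{hypG} buys here is merely that the characteristics-based construction (constancy in $t$ of $\rho(t,X(t;\tau,\sig))J_1$ via Proposition \ref{propChgtVar}, which is indeed how the paper exploits the result downstream) becomes available and makes $\gamma(h(\rho))=h(\gamma\rho)$ transparent. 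Second, the step ``approximate $\rho$ by smooth solutions for which the trace identity holds classically'' is not an argument as stated --- a weak solution of \eqref{equationTransport} alone does not come with an approximation by smooth solutions, and producing one is essentially equivalent to what you are trying to prove; the workable versions are either the time-constancy of the transported density just mentioned, or Boyer's localization of the commutator estimate near $\d\Om$ with tangentially translated mollifiers. A last minor gap: for part (i), ``weak convergence plus convergence of norms implies strong convergence'' requires $1<p<\infty$ (uniform convexity); the case $p=1$ follows afterwards since $\Om$ is bounded. None of these invalidates the strategy, but as written the trace construction is the one step that is asserted rather than proved.
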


\begin{rem}\label{remValAbs}~\\
 \espace $\bullet$ By approximating the function $s\mapsto |s|$ by $\C^1$ functions, it is possible to show that the formula \eqref{renormalisation_trace} stands with $h(s)=|s|$. \\
\espace $\bullet$ The second point of the proposition implies in particular that $h(\rho)$ has a trace which is $h(\gamma\rho)$. \\
\espace $\bullet$ In \cite{franck}, this proposition is proved in the case of a much less regular field $G$ but with the technical assumption that $\div G=0$, which is not the case here. Though, the proof can be extended to our case.
\end{rem}
Thanks to this result, we can prove the following proposition.
\begin{prop}[Continuous \textit{a priori} estimates]\label{propEstimCont}
 Let $\rho \in L^\infty(]0,T[\times \Om)$ be a weak solution of the equation \eqref{equationNonAutonome}. The following estimates stand 
\begin{equation}\label{estimationL1}
 ||\rho(t,\cdot)||_{L^1(\Om)}\leq e^{t||\beta||_{L^\infty}}||\rho^0||_{L^1(\Om)} + \int_0^t e^{(t-s)||\beta||_{L^\infty}}\int_{\d\Om}|f(s,\sig)|d\sig ds
\end{equation}
and
\begin{align}\label{estimationLinf}
||\rho||_{L^\infty(]0,T[\times\Om)} \leq C_\infty
\end{align}
with 
$$C_\infty=\left(||N||_{L^\infty}||\beta||_{L^\infty}||\rho||_{L^\infty} + ||f||_{L^\infty}\right)||G||_{L^\infty}e^{T||\div \,
G||_{L^\infty}} + ||\rho^0||_{L^\infty}e^{T||\div \,
G||_{L^\infty}} $$
\end{prop}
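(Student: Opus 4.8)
The plan is to read \eqref{estimationL1} off the renormalization identity \eqref{renormalisation_trace} and to obtain \eqref{estimationLinf} from the explicit representation \eqref{rhoRho1Rho2} furnished by Theorem~\ref{propRhoRho1Rho2}.

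For the $L^1$ estimate I would apply \eqref{renormalisation_trace} with the nonsmooth renormalizing function $h(s)=|s|$, which is licit by the first point of Remark~\ref{remValAbs}, together with the test function $\phi\equiv 1$ and the interval $[t_0,t_1]=[0,t]$. Since $\d_t\phi=0$ and $\div(G\phi)=\div G$, the volume term $\int_0^t\int_\Om|\rho|\div G$ produced by the first integral is cancelled exactly by the last term, because $h'(\rho)\rho=\sgn(\rho)\rho=|\rho|$. There remains
\[
\|\rho(t)\|_{L^1(\Om)}=\|\rho^0\|_{L^1(\Om)}-\int_0^t\int_{\d\Om}|\gamma\rho|\,\Gn\,d\sig\,ds .
\]
The decisive step is to rewrite the boundary term through the boundary condition: it forces $-\Gn\,\gamma\rho=N B(s,\rho)+f$, hence $|\gamma\rho|\,|\Gn|=|N B(s,\rho)+f|$, and since the characteristics enter $\Om$ (so $\Gn$ has the inflow sign and $|\Gn|\ge\delta$ by \eqref{hypG}) the boundary contribution adds mass, equal to $\int_{\d\Om}|N B(s,\rho)+f|d\sig$. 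Using $N\ge0$, $\int_{\d\Om}N=1$ from \eqref{hypDonnees} and $|B(s,\rho)|\le\|\beta\|_{L^\infty}\|\rho(s)\|_{L^1(\Om)}$, one arrives at
\[
\|\rho(t)\|_{L^1(\Om)}\le\|\rho^0\|_{L^1(\Om)}+\int_0^t\Big(\|\beta\|_{L^\infty}\|\rho(s)\|_{L^1(\Om)}+\int_{\d\Om}|f(s,\sig)|\,d\sig\Big)ds ,
\]
and Gr\"onwall's lemma gives \eqref{estimationL1}.

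For the $L^\infty$ estimate I would use Theorem~\ref{propRhoRho1Rho2}: the functions $\trho_1,\trho_2$ attached to $\rho$ by \eqref{expressionTrho12} solve \eqref{equationTrho1}--\eqref{equationTrho2}, and since they obey $\d_t\trho_i=0$ they do not depend on the first time variable, so that $\trho_2(t,Y)=\rho^0(Y)$ and $\trho_1(t,\tau,\sig)=\trho_1(\tau,\tau,\sig)=N(\sig)B(\tau,\rho)+f(\tau,\sig)$. Inserting these into \eqref{rhoRho1Rho2} and invoking the Jacobian formulas \eqref{formule_jacobien}, I would bound the two pieces separately. On $\Om_2^t$ one has $J_2^{-1}=e^{-\int_0^t\div G}\le e^{T\|\div G\|_{L^\infty}}$, whence $|\rho(t,X)|\le\|\rho^0\|_{L^\infty}e^{T\|\div G\|_{L^\infty}}$. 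On $\Om_1^t$, with $\tau=\tau^t(X)$, $\sig=\sig^t(X)$,
\[
|\rho(t,X)|=\frac{|N(\sig)B(\tau,\rho)+f(\tau,\sig)|}{|\Gn(\tau,\sig)|}\,e^{-\int_\tau^t\div G}\le\big(\|N\|_{L^\infty}|B(\tau,\rho)|+\|f\|_{L^\infty}\big)e^{T\|\div G\|_{L^\infty}}/\delta ,
\]
using \eqref{hypG} to control $1/|\Gn|$. Bounding $|B(\tau,\rho)|\le\|\beta\|_{L^\infty}\|\rho(\tau)\|_{L^1(\Om)}$, which is itself controlled either by the $L^1$ estimate just proved or crudely by $\|\beta\|_{L^\infty}|\Om|\,\|\rho\|_{L^\infty}$, and taking the supremum over $(t,X)$ yields a bound of the announced form \eqref{estimationLinf}.

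The one genuinely delicate ingredient is the boundary trace. A priori $\rho\in L^\infty$ has no pointwise values on $\d\Om$, and it is precisely Proposition~\ref{propRenormalisation}(ii) that supplies a well-defined trace $\gamma\rho\in L^\infty(d\mu_G)$ and the identity \eqref{renormalisation_trace} valid for $h(s)=|s|$; identifying this trace with the nonlocal datum $N B+f$ via the boundary condition, and keeping correct track of the inflow sign of $\Gn$, is the heart of the matter. Once the trace identity and the representation \eqref{rhoRho1Rho2} are available, the cancellation of the $\div G$ terms, the Gr\"onwall step and the pointwise estimates carried through the bilipschitz changes of variables of Proposition~\ref{propChgtVar} are routine.
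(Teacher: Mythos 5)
Your proof is correct and follows essentially the same route as the paper: the $L^1$ bound comes from the renormalization identity with $h(s)=|s|$ (the paper phrases the choice $\phi\equiv 1$ as ``integrating the equation for $|\rho|$ over $\Om$''), the trace $\gamma\rho$ is identified with the nonlocal boundary datum $N B+f$ exactly as in \eqref{traceRenouv}, and Gr\"onwall concludes, while the $L^\infty$ bound is read off the representation $\rho=\trho_1 J_1^{-1}+\trho_2 J_2^{-1}$ together with the Jacobian formulas \eqref{formule_jacobien} and the solved form of \eqref{equationTrho1}--\eqref{equationTrho2}. The only (cosmetic) divergence is that you bound $J_1^{-1}$ by $\delta^{-1}e^{T\|\div G\|_{L^\infty}}$ using \eqref{hypG}, which is in fact the correct estimate, whereas the stated constant $C_\infty$ carries a factor $\|G\|_{L^\infty}$ in that place (and an $\|\rho\|_{L^\infty}$ that should be read as $\sup_t\|\rho(t,\cdot)\|_{L^1}$, controlled by \eqref{estimationL1}, as you note).
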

\begin{proof}~\\
\espace $\bullet$\textit{ Estimate in $L^1$.} Let $\rho$ be a weak solution of the equation \eqref{equationNonAutonome}. Then in particular it solves \eqref{equationTransport} in the sense of distributions. Thus the proposition \ref{propRenormalisation} applies and gives a trace $\gamma\rho \in L^\infty(]0,T[\times\d\Om;|d_{\mu G}|)$. Now, by using \eqref{renormalisation_trace} with $h(s)=s$ and the definition of weak solutions to the equation \eqref{equationNonAutonome} we have that for all $\phi \in \C^1_c([0,T[\times \barre{\Om})$ 
$$\int_0^T\int_{\d\Om}\gamma\rho(t,\sig)\phi(t,\sig)G(t,\sig)\cdot \nu d\sig dt = \int_0^T\int_{\d\Om}\left\lbrace N(\sig)\int_\Om \beta(X) \rho(t,X)dX +f(t,\sig)\right\rbrace\phi(t,\sig)d\sig dt$$
which gives 
\begin{equation}\label{traceRenouv}
-\gamma\rho(t,\sig)G(t,\sig)\cdot \nu=N(\sig)\int_\Om \beta(X) \rho(t,X)dX +f(t,\sig),\quad a.e.
\end{equation}
In view of the remark \ref{remValAbs}, we know that $|\rho|$ is also a weak solution to the equation \eqref{equationNonAutonome}, with initial data $|\rho^0|$ and boundary data $\left| N(\sig) B(t,\rho) + f(t,\sig)\right|$. By integrating this equation on $\Om$ and using the divergence formula, we obtain in the distribution sense :
$$\frac{d}{dt}\int_\Om |\rho(t,X)|dX =- \int_{\d\Om}G(t,\sig)\cdot\nu|\gamma\rho(t,\sig)|d\sig=\int_{\d\Om}\left|N(\sig) B(t,\rho) +f(t,\sig)\right| d\sig$$
and thus
$$\frac{d}{dt}\int_\Om |\rho(t,X)|dX \leq ||\beta||_{\infty}\int_\Om |\rho(t,X)|dX + |f(t,\sig)|.$$
A Gronwall lemma concludes. \\
\espace $\bullet$ \textit{Estimate in $L^\infty$.} Using the proposition \ref{propRhoRho1Rho2}, we have $\trho_1$ and $\trho_2$ solving \eqref{equationTrho1} and \eqref{equationTrho2}. By doing the changes of variables, using the definitions of $\trho_1$ and $\trho_2$ and the formulas \eqref{formule_jacobien}, we see that 
$$||\rho(t,\cdot)||_{L^1(\Om)}=||\trho_1(t,\cdot)||_{L^1(]0,t[\times \d\Om)}+||\trho_2(t,\cdot)||_{L^1(\Om)},\quad \forall \,t>0$$
$$||\rho||_{L^\infty(]0,T[\times\Om)}\leq ||\trho_1||_{L^\infty(\tQ_1)}||G||_{L^\infty(\d\Om)}e^{T||\div \,
G||_{\infty}} + ||\trho_2||_{L^\infty(]0,T[\times\Om)}e^{T||\div \,
G||_{\infty}}$$
But solving explicitely the equation \eqref{equationTrho1}, we have 
\begin{align*}
|\trho_1(t,\tau,\sig)| & =|\trho_1(\tau,\tau,\sig)|=\left|N(\sig)\tB(t,\trho_1,\trho_2)+f(t,\sig)\right| \\
	 & \leq ||N||_\infty ||\beta||_\infty (||\trho_1(\tau,\cdot)||_{L^1} + ||\trho_2(\tau,\cdot)||_{L^1}) + ||f||_{L^\infty}\\
	 & \leq ||N||_\infty ||\beta||_\infty ||\rho(\tau,\cdot)||_{L^1} + ||f||_{L^\infty}
\end{align*}
On the other hand, for $\trho_2$ we have $||\trho_2||_{L^\infty(]0,T[\times\Om)}=||\trho_2(0)||_{L^\infty(\Om)}=||\rho^0||_{L^\infty(\Om)}$.
\end{proof}

\begin{rem}
 The expression \eqref{traceRenouv} shows that in the case of a zero boundary data $f$, the trace $\gamma\rho$ has some extra regularity, namely it is $\C([0,T];L^1(\d\Om))$.
\end{rem}

\begin{cor}[Uniqueness]\label{propUnicite}
 If $\rho$ and $\rho'$ are two weak solutions of the problem \eqref{equationNonAutonome}, then $\rho=\rho'$ almost everywhere.
\end{cor}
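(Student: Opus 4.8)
The plan is to exploit the linearity of the problem together with the a priori $L^1$ estimate \eqref{estimationL1} already established in Proposition \ref{propEstimCont}. Since the transport equation, the nonlocal boundary term $B(t,\rho)=\int_\Om\beta(X)\rho(t,X)dX$, the initial condition and the source $f$ are all affine in $\rho$, the difference $w:=\rho-\rho'$ of two weak solutions sharing the same data should again be a weak solution of \eqref{equationNonAutonome}, but now with vanishing data $\rho^0\equiv 0$ and $f\equiv 0$. Uniqueness then reduces to showing that the only weak solution with zero initial and boundary data is the trivial one.

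First I would carry out this reduction explicitly. Writing the weak formulation \eqref{formulationFaible} for $\rho$ and for $\rho'$ against the same test function $\phi\in\C^1([0,T]\times\bOm)$ with $\phi(T,\cdot)=0$ and subtracting, the bulk term $\int_0^T\int_\Om\rho[\d_t\phi+G\cdot\nabla\phi]$ is visibly linear in the unknown, the boundary term is linear because $B(t,\cdot)$ is, and the identical contributions of $\rho^0$ and $f$ cancel. Hence $w$ satisfies \eqref{formulationFaible} with $\rho^0$ and $f$ replaced by $0$; that is, $w\in L^\infty(]0,T[\times\Om)$ is a weak solution of \eqref{equationNonAutonome} with zero data.

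Next I would apply the estimate \eqref{estimationL1} to $w$. With $\rho^0\equiv0$ and $f\equiv0$ its right-hand side vanishes, so that
$$||w(t,\cdot)||_{L^1(\Om)}\leq e^{t||\beta||_{L^\infty}}\,||0||_{L^1(\Om)}+\int_0^t e^{(t-s)||\beta||_{L^\infty}}\int_{\d\Om}|0|\,d\sig\,ds=0$$
for (almost every, hence every, using the time-continuity of Proposition \ref{propRenormalisation}(i)) $t\in[0,T]$. Therefore $w(t,\cdot)=0$ in $L^1(\Om)$ for all $t$, i.e. $\rho=\rho'$ almost everywhere on $]0,T[\times\Om$.

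I do not anticipate a serious obstacle here, since the heavy lifting — the renormalization property and the resulting $L^1$ bound — has already been done in Proposition \ref{propEstimCont}. The only point requiring genuine care is the very first step: confirming that the difference of two weak solutions is itself a \emph{weak solution with zero data}, which hinges on every term of \eqref{formulationFaible} being affine in $\rho$, the nonlocal coupling through $B(t,\rho)$ being the one to check. Once that is in place, the a priori estimate may be invoked directly rather than re-derived, and the conclusion is immediate.
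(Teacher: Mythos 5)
Your argument is correct and is exactly the one the paper intends: the statement is presented as a corollary of Proposition \ref{propEstimCont} precisely because, by linearity of \eqref{formulationFaible} in $\rho$ (including the nonlocal term $B(t,\rho)$), the difference of two weak solutions is a weak solution with zero data, to which the $L^1$ estimate \eqref{estimationL1} applies and gives zero. Nothing further is needed.
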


\section{Construction of approximated solutions and application to the existence}

In this section, we build a weak solution to the equation \eqref{equationNonAutonome} which, in view of the previous considerations, can be achieved by building a couple $(\rho_1,\rho_2)$ of solutions to the equations \eqref{equationTrho1}-\eqref{equationTrho2} (recall proposition \ref{propRhoRho1Rho2}). We will achieve the existence by convergence of an approximation scheme to the problem \eqref{equationTrho1}-\eqref{equationTrho2} where the difficulty is restricted to the approximation of the boundary condition. Then we establish an error estimate in the case of more regular data. In order to avoid heavy notations, we forget about the tilda when referring to the problem \eqref{equationTrho1}-\eqref{equationTrho2}. We place ourselves in the case where $\Om=(1,b)^2$.

\subsection{Construction of approximated solutions of the problem \eqref{equationTrho1}-\eqref{equationTrho2}}

Let $0=t_0<...<t_k<..<t_{K+1}=T$ be a uniform subdivision of $[0,T]$ with $t_{k+1}-t_k=\delta t$. For the equation \eqref{equationTrho2}, we give ourself uniform subdivisions $1=x_1<...<x_l<...<x_{L+1}=b$ and $1=\tt_1<...<\tt_m<...<\tt_{L+1}=b$, with $x_{l+1}-x_l=\tt_{m+1}-\tt_m=\delta x$. The scheme for the equation \eqref{equationTrho2} is then given by :
\begin{eqnarray}\label{schemaRho1}\left\lbrace\begin{array}{ll}
\rho_2^0(l,m)=\frac{1}{(\delta x)^2}\int_{x_l}^{x_{l+1}}\int_{\tt_m}^{\tt_{m+1}}\rho^0(x,\tt)dxd\tt & 1\leq l,m\leq L \\
\rho_2^{k+1}(l,m)=\rho_2^{k}(l,m)    & 1\leq k \leq K,\;1\leq l,m\leq L                                               
\end{array}\right. .
\end{eqnarray}
That is, $\rho_2^k(l,m)=\rho_2^0(l,m)$ for all $k,l,m$.\\
For the discretization of the equation \eqref{equationTrho1}, for each $k$ let $0=\tau_0<...<\tau_i<...<\tau_k=t_k$ with $\tau_{i+1}-\tau_i=\delta t$. Let $\sig:\begin{array}{ccc}
[0,1] & \rightarrow & \d\Om \\
s     & \mapsto     & \sig(s)                                                                                                                                                                                                                                            
\end{array}
$ be a parametrization of $\d\Om$ with $|\sig'(s)|=1$ a.e., so that for $g\in L^1(\d\Om)$ we have $\int_{\d\Om}g(\sig)d\sig=\int_0^1g(\sig(s))ds$. Let $0=s_1<...<s_j<...<s_{M+1}=1$ be an uniform subdivision with $s_{j+1}-s_j=\delta \sig$. The scheme is given by 
\begin{eqnarray}\label{schemaRho2}\left\lbrace\begin{array}{ll}
\rho_1^0(0,j)=N_j B^0((\rho_2^0)_{l,m}) + f^0_j& 1\leq j \leq M \\
\rho_1^{k+1}(i,j)=\rho_1^k(i,j)         & 1\leq k \leq K,\;0\leq i\leq k,\;1\leq j\leq M \\
\rho_1^{k+1}(k+1,j)=N_j 	B^{k+1}(\rho_1^{k+1},\rho_2^{k+1}) + f^{k+1}_j & 1\leq j\leq M\end{array}\right.
\end{eqnarray}
with 
\begin{align*}
B^k(\rho_1^k,\rho_2^k) & = \sum_{i=1}^{k-1}\sum_{j=1}^{M}\beta_{i,j}^1 \rho_1^k(i,j)\delta t \delta \sig + \sum_{l,m=1}^{L}\beta_{l,m}^2\rho_2^k(l,m)\left(\delta x\right)^2  \\
  & \simeq \int_0^{t_k}\int_{\d\Om}\beta(X(t_k;\tau,\sig))\rho_1(t_k,\tau,\sig)d\tau d\sig + \int_\Om\beta(X(t_k;0,Y)) \rho_2(t_k,Y)dY
\end{align*}
and 
\begin{equation}\label{approxDonnees}\begin{array}{c}
\beta_{i,j}^1:=\frac{1}{\delta t \delta \sig}\int_{\tau_i}^{\tau_{i+1}}\int_{\sig_j}^{\sig_{j+1}}\beta(X(t_k;\tau,\sig))d\sig d\tau,\;\beta_{l,m}^2:=\int_{x_l}^{x_{l+1}}\int_{\tt_m}^{\tt_{m+1}}\beta(X(t_k;0,(x,\tt)))dx d\tt \\
f^{k}_j:=\frac{1}{\delta t \delta\sig}\int_{t_k}^{t_{k+1}}\int_{\sig_j}^{\sig_{j+1}}f(t,\sig)d\sig dt,\; N_j:=\frac{1}{\delta\sig}\int_{\sig_j}^{\sig_{j+1}}N(\sig)d\sig .
\end{array}\end{equation}
Notice that the schemes \eqref{schemaRho1} and \eqref{schemaRho2} are well-posed since the definition of $\rho_1^{k+1}(k+1,j)$ involves values of $\rho_1^{k+1}(i,j)$ only with $0\leq i \leq k$.
We denote by $h=\delta t + \delta \sig + \delta x$ and define now the piecewise constant functions $\rho_{1,h}$ and $\rho_{2,h}$ on $\tQ_1$ and $[0,T[\times \bOm$ by, for $0\leq k\leq K,\;1\leq i \leq k,\;1\leq j\leq M$ and $1\leq l,m\leq L$
\begin{equation}\label{extensionRho1kRho2k}\begin{array}{ll}
   \rho_{1,h}(t,\tau,\sig(s))=\rho_1^k(i,j) & \text{ for }t\in[t_k,t_{k+1}[,\,\tau \in ]\tau_{i-1},\tau_i],\;s\in[s_j,s_{j+1}[\\ 
  \rho_{1,h}(t,\tau,\sig(s))=0             &\text{ for }t\in[t_k,t_{k+1}[,\,\tau \in ]t_k,t],\;s\in[s_j,s_{j+1}[ \\
  \rho_{2,h}(t,x,\tt)=\rho_2^k(l,m)        &\text{ for }t\in[t_k,t_{k+1}[,x\in[x_{l},x_{l+1}[,\;\tt\in[\tt_m,\tt_{m+1}[.
  \end{array}
\end{equation}
See the figure \ref{schemaDiscret} for an illustration.
\begin{figure}[!ht]
\begin{center}
 \input{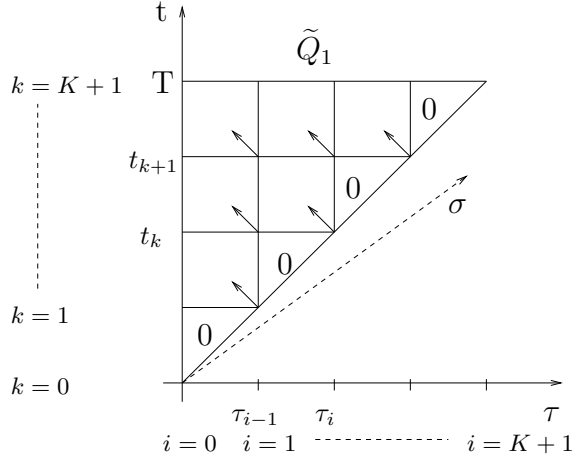}
\caption{Description of the discretization grid for $\tQ_1$, only in the $(\tau,t)$ plane. The arrows indicate the index used in assigning values to $\rho_{1,h}$ in each mesh (formula \eqref{extensionRho1kRho2k}).}\label{schemaDiscret}
\end{center}
\end{figure}
Notice that we have
\begin{equation}\label{exprNormL1}
\left|\left|\rho_{1,h}(t_k,\cdot)\right|\right|_{L^1(]0,t_k[\times\d\Om)}=\sum_{i=1}^k\sum_{j=1}^{M}\left|\rho_1^k(i,j)\right|\delta t \delta \sig,\;||\rho_{2,h}(t_k,\cdot)||_{L^1(\Om)}=\sum_{l,m=1}^M\abs{\rho_2^k(l,m)}(\delta x)^2.
\end{equation}

\begin{rem}~\\
\espace$\bullet$ We take the same discretization step in $x$ and $\tt$ for $\rho_2$ but  it would work the same with two different steps.\\
\espace$\bullet$ For more regular data, we could take point values instead of \eqref{approxDonnees}.\\
\espace$\bullet$ It will be clear from the following that the scheme would converge the same regardless to the value that we give to $\rho_1^0(0,j)$.\\
\end{rem}

\subsection{Discrete \textit{a priori} estimates}

We prove the equivalent of the proposition \ref{propEstimCont} in the discrete case. Notice that there exists a constant $C_{\sig}$ such that $\sum_{j=1}^{M} N_j \delta \sig \leq \int_{\d\Om} N(\sig)d\sig
+ C_\sig \delta \sig = 1+C_\sig \delta \sig:=||N||_h$ and $\sum_{j=1}^{M} f^{k+1}_j \delta \sig \leq ||f||_{L^\infty(]0,T[;L^1(\d\Om))}
+ C_\sig \delta \sig:=||f||_h$.

\begin{prop}[Discrete \textit{a priori} estimates]\label{propEstimDisc}
 Let $\left(\rho_1^k(i,j)\right)_{\scriptscriptstyle k,i,j}$ and $\left(\rho_2^k(l,m)\right)_{\scriptscriptstyle k,l,m}$ being given by \eqref{schemaRho1} and \eqref{schemaRho2} respectively. Then for all $k$
$$||\rho_{2,h}(t_k,\cdot)||_{L^1(\Om)}=||\rho^0||_{L^1(\Om)},\quad ||\rho_{2,h}||_{L^\infty(]0,T[\times\Om)}=||\rho^0||_{L^\infty(\Om)}$$
\begin{equation}\label{estimationL1Disc}
 ||\rho_{1,h}(t_k,\cdot)||_{L^1(]0,t_k[\times\d\Om)} \leq e^{t_k ||\beta||_{L^\infty}||N||_h}\left\lbrace||\rho^0||_{L^1(\Om)} + \frac{||f||_h}{||\beta||_{L^\infty}||N||_h}\right\rbrace,
\end{equation}
\begin{equation}\label{estimationLinfDisc}
 ||\rho_{1,h}||_{L^\infty(\tQ_1)}\leq ||N||_{L^\infty}||\beta||_{L^\infty}\underset{k}{\max}\left(||\rho_{1,h}(t_k,\cdot)||_{L^1}+||\rho^0||_{L^1}\right) +||f||_{L^\infty}.
\end{equation}
Moreover, if $\rho^0 \geq 0$ then $\rho_1^k(i,j),\;\rho_2^k(l,m)\geq 0$ for all $k,i,j,l,m$.
\end{prop}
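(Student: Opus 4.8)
The plan is to carry out, at the discrete level, the same splitting used in Proposition~\ref{propEstimCont}: handle $\rho_{2,h}$ by hand (it is frozen in time), then obtain the $L^1$ bound on $\rho_{1,h}$ by a discrete Gronwall argument exploiting the triangular (``staircase'') index set of $\tQ_1$, and finally deduce the $L^\infty$ bound and positivity from the explicit form of the scheme. I would start with $\rho_{2,h}$. The recurrence \eqref{schemaRho1} gives $\rho_2^k(l,m)=\rho_2^0(l,m)$ for every $k$, and each $\rho_2^0(l,m)$ is the mean of $\rho^0$ over a cell, so Jensen applied cell by cell gives $|\rho_2^0(l,m)|\leq ||\rho^0||_{L^\infty}$ and $\sum_{l,m}|\rho_2^0(l,m)|(\delta x)^2\leq ||\rho^0||_{L^1}$. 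With \eqref{exprNormL1} this yields the two stated identities for $\rho_{2,h}$ (equalities for the nonnegative data of the model, and the corresponding upper bounds in general); in particular $||\rho_{2,h}(t_k,\cdot)||_{L^1(\Om)}=||\rho^0||_{L^1(\Om)}$ will serve as an input below.

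For the core $L^1$ estimate, set $A_k:=||\rho_{1,h}(t_k,\cdot)||_{L^1(]0,t_k[\times\d\Om)}=\sum_{i=1}^k\sum_{j=1}^M|\rho_1^k(i,j)|\delta t\,\delta\sig$, with $A_0=0$ since the integration domain is empty. The key structural point is that passing from level $k$ to $k+1$ leaves $\rho_1^{k+1}(i,j)=\rho_1^k(i,j)$ untouched for $i\leq k$ and only creates the new diagonal row $i=k+1$, whence $A_{k+1}=A_k+\delta t\,\delta\sig\sum_{j}|\rho_1^{k+1}(k+1,j)|$. I would then bound the new row using the last line of \eqref{schemaRho2}. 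Since $\beta\in L^\infty$ and $X(t;\cdot,\cdot)$ maps into $\bOm$, the cell-averaged coefficients obey $|\beta^1_{i,j}|,|\beta^2_{l,m}|\leq ||\beta||_{L^\infty}$, and because $B^{k+1}$ only involves the already-frozen values $\rho_1^{k+1}(i,j)=\rho_1^k(i,j)$ with $i\leq k$, one gets $|B^{k+1}|\leq ||\beta||_{L^\infty}\bigl(A_k+||\rho^0||_{L^1}\bigr)$. Summing $|\rho_1^{k+1}(k+1,j)|\leq N_j|B^{k+1}|+|f^{k+1}_j|$ against $\delta\sig$ and using $\sum_j N_j\delta\sig\leq ||N||_h$ and $\sum_j|f^{k+1}_j|\delta\sig\leq ||f||_h$ produces the affine recursion $A_{k+1}\leq (1+a\,\delta t)A_k+b\,\delta t$ with $a:=||\beta||_{L^\infty}||N||_h$ and $b:=a\,||\rho^0||_{L^1}+||f||_h$. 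Iterating from $A_0=0$ and summing the geometric series gives $A_k\leq \frac{b}{a}\bigl((1+a\,\delta t)^k-1\bigr)\leq \frac{b}{a}e^{a t_k}$, which is exactly \eqref{estimationL1Disc} after substituting $a$ and $b$.

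The last two claims follow quickly. For the $L^\infty$ bound, the scheme gives the explicit formula $\rho_1^k(i,j)=\rho_1^i(i,j)=N_j B^i+f^i_j$, so $|\rho_1^k(i,j)|\leq ||N||_{L^\infty}|B^i|+||f||_{L^\infty}$; bounding $|B^i|\leq ||\beta||_{L^\infty}\bigl(A_{i-1}+||\rho^0||_{L^1}\bigr)\leq ||\beta||_{L^\infty}\underset{k}{\max}\bigl(||\rho_{1,h}(t_k,\cdot)||_{L^1}+||\rho^0||_{L^1}\bigr)$ and taking the supremum over $k,i,j$ yields \eqref{estimationLinfDisc}. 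For positivity, under the natural sign conventions of the model ($\beta\geq 0$, $f\geq 0$, together with $N\geq 0$ from \eqref{hypDonnees}) all coefficients in \eqref{approxDonnees} and all averages $\rho_2^0(l,m)$ are nonnegative; an induction on $k$ in which $B^{k+1}\geq 0$ forces $\rho_1^{k+1}(k+1,j)\geq 0$ (the base case $B^0\geq 0$ being trivial, its $\rho_1$-sum over $i=1,\dots,-1$ being empty) concludes, while the frozen values stay nonnegative.

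The main obstacle is the bookkeeping in the $L^1$ estimate rather than any analytic subtlety: one must track the triangular index set of $\tQ_1$ correctly, verify that $A_{k+1}-A_k$ reduces to the single newly created diagonal row, and check that $B^{k+1}$ is expressible purely through quantities already controlled at level $k$ (i.e. that the scheme is explicit, as noted after \eqref{approxDonnees}), so that the recursion genuinely closes. Once the affine recursion $A_{k+1}\leq(1+a\,\delta t)A_k+b\,\delta t$ is established, the discrete Gronwall step and the identification of the constants are routine.
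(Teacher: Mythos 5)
Your proof is correct and follows essentially the same route as the paper's: reduce the increment $\|\rho_{1,h}(t_{k+1},\cdot)\|_{L^1}-\|\rho_{1,h}(t_k,\cdot)\|_{L^1}$ to the newly created diagonal row, bound $B^{k+1}$ by $\|\beta\|_{L^\infty}\left(\|\rho_{1,h}(t_k,\cdot)\|_{L^1}+\|\rho^0\|_{L^1}\right)$ using that the scheme is explicit, and close with a discrete Gronwall argument (which you carry out by iterating the affine recursion explicitly rather than citing the lemma). Your side remark that the $L^1$ identity for $\rho_{2,h}$ is, for signed initial data, only an inequality under cell averaging is a fair refinement of the statement, which the paper implicitly assumes away via $\rho^0\geq 0$.
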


\begin{proof}
The non-negativity of the scheme is straightforward from the definition. The estimate for $\rho_{2,h}$ follows directly from the scheme \eqref{schemaRho2}. For the $L^1$ estimate on $\rho_{1,h}$ we compute, using the scheme \eqref{schemaRho1}
\begin{align*}
||\rho_{1,h}(t_{k+1},\cdot)||_{L^1(]0,t_{k+1}[\times\d\Om)} & =\sum_{i=1}^{k+1}\sum_{j=1}^{M}\left|\rho^{k+1}(i,j)\right|\delta t \delta \sig \\
 & = \sum_{i=1}^k\sum_{j=1}^{M}\left|\rho^k(i,j)\right| \delta t \delta \sig +\left| B^{k+1}(\rho_1^{k+1},\rho_2^{k+1})\right|\delta t\sum_{j=1}^{M}N_j \delta\sig + \delta t \sum_{j=1}^{M}\left|f^{k+1}_j\right| \delta\sig\\
 & \leq ||\rho_{1,h}(t_k)||_{L^1(]0,t_k[\times\d\Om)} + \left|B^{k+1}(\rho_1^{k+1},\rho_2^{k+1})\right|\delta t||N||_h + \delta t||f||_h
\end{align*}
Now from the expression of $B^{k+1}(\rho_1^{k+1},\rho_2^{k+1})$
\begin{align*}
\left|B^{k+1}(\rho_1^{k+1},\rho_2^{k+1})\right| \leq ||\beta||_{L^\infty}||\rho_{1,h}(t_k,\cdot)||_{L^1} +||\beta||_{L^\infty} ||\rho_{2,h}(t_k,\cdot)||_{L^1}.
\end{align*}
Thus we obtain
\begin{align*}
||\rho_{1,h}(t_{k+1},\cdot) ||_{L^1} & \leq \left(1+||\beta||_{L^\infty}\delta t||N||_h\right)||\rho_{1,h}(t_{k},\cdot)||_{L^1} + ||\beta||_{L^\infty}\delta t||N||_h||\rho_{2,h}(t_k,\cdot)||_{L^1} + \delta t||f||_h
\end{align*}
Now using a discrete Gronwall lemma we obtain
$$||\rho_{1,h}(t_{k+1},\cdot) ||_{L^1} \leq e^{||\beta||_{L^\infty}||N||_h t_k}\left\lbrace ||\rho_{1,h}(t_0,\cdot)||_{L^1} + \frac{||\beta||_{L^\infty}||N||_h||\rho_{2,h}(t_k,\cdot)||_{L^1} + ||f||_h}{||\beta||_{L^\infty}||N||_h} \right\rbrace$$
Using $||\rho_{1,h}(t_0,\cdot)||_{L^1}=0$ and $||\rho_{2,h}(t_k,\cdot)||_{L^1(\Om)}= ||\rho^0||_{L^1(\Om)}$ ends the proof of the $L^1$ estimate. \\
For the $L^\infty$ estimate, we remark that 
\begin{align*}
||\rho_{1,h}||_{L^\infty(\tQ_1)} & =\underset{k}{\max}\,\underset{i,j}{\max}|\rho_1^k(i,j)|=\underset{k}{\max}\,\underset{j}{\max}\left(\left|B^k(\rho_1^k,\rho_2^k) N_j + f^k_j\right|\right)\leq ||N||_{L^\infty}\underset{k}{\max}\left|B^k(\rho_1^k,\rho_2^k)\right| + ||f||_{L^\infty} \\
  & \leq ||N||_{L^\infty}||\beta||_{L^\infty}\underset{k}{\max}\left(||\rho_{1,h}(t_k,\cdot)||_{L^1}+||\rho_{2,h}(t_k,\cdot)||_{L^1}\right) +||f||_{L^\infty}.
\end{align*}
\end{proof}
\subsection{Application to existence of solutions to the continuous problem \eqref{equationTrho1}-\eqref{equationTrho2}}
\begin{thr}[Existence]\label{existence}
Under the assumptions \eqref{hypDonnees}, there exists $\rho_1 \in L^\infty(\tQ_1)$ and $\rho_2\in L^\infty(]0,T[\times\Om)$ such that $\rho_{1,h}\underset{h\rightarrow 0}{\rightharpoonup} \rho_1$ and $\rho_{2,h}\underset{h\rightarrow 0}{\rightharpoonup} \rho_2$ for the weak-$\ast$ topology of $L^\infty$. Furthermore, $(\rho_1,\rho_2)$ is the unique weak solution of \eqref{equationTrho1}-\eqref{equationTrho2}.
\end{thr}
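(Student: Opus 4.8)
The plan is to combine the uniform discrete estimates of Proposition~\ref{propEstimDisc} with a weak-$\ast$ compactness argument, pass to the limit in a discrete version of the weak formulation of Definition~\ref{defSolRho1Rho2}, and finally invoke uniqueness to upgrade subsequential convergence to convergence of the whole family. First I would check that the discrete bounds are uniform in $h$: the $L^1$ estimate \eqref{estimationL1Disc} is uniform because $\|N\|_h = 1+C_\sig\delta\sig\to 1$ and $\|f\|_h$ stays bounded, so feeding $\max_k\|\rho_{1,h}(t_k,\cdot)\|_{L^1}$ into \eqref{estimationLinfDisc} together with $\|\rho_{2,h}\|_{L^\infty}=\|\rho^0\|_{L^\infty}$ yields $\|\rho_{1,h}\|_{L^\infty(\tQ_1)}\le C$ with $C$ independent of $h$. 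Since $L^\infty$ is the dual of the separable space $L^1$, the Banach--Alaoglu theorem provides weak-$\ast$ convergent subsequences $\rho_{1,h}\rightharpoonup\rho_1$ and $\rho_{2,h}\rightharpoonup\rho_2$.

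The identification of $\rho_2$ is immediate: the scheme \eqref{schemaRho1} only averages $\rho^0$ on each cell and freezes the result in time, so $\rho_{2,h}$ is time-independent and converges strongly in $L^1$ to $\rho^0$; hence $\rho_2(t,\cdot)=\rho^0$ a.e., and integrating the trivial time dependence against $\tphi_2$ gives \eqref{formulationFaibleTrho2}.

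The heart of the proof is the passage to the limit for $\rho_1$. Multiplying the recursion \eqref{schemaRho2} by nodal values of a test function $\tphi_1$ and summing over $k$, I would perform a discrete integration by parts (Abel summation) in the index $k$. The identity $\rho_1^{k+1}(i,j)=\rho_1^k(i,j)$ — the discrete analogue of $\d_t\trho_1=0$ — makes the sum telescope, producing on one side the volume term $\int_0^T\int_0^t\int_{\d\Om}\rho_{1,h}\,\d_t\tphi_1$ and on the other the boundary contribution carried by the freshly injected values $\rho_1^{k+1}(k+1,j)=N_j B^{k+1}+f_j^{k+1}$, i.e. a discrete version of $\int_0^T\int_{\d\Om}\{N\tB+f\}\tphi_1(t,t,\sig)$. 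The volume term passes to the limit at once, since $\d_t\tphi_1\,\mathbf{1}_{\tau\le t}$ is a fixed $L^1$ test function and $\rho_{1,h}\rightharpoonup\rho_1$ weak-$\ast$. The delicate point — which I expect to be the \emph{main obstacle} — is the nonlocal boundary term $B^{k}(\rho_1^k,\rho_2^k)$: the scheme being linear, no compensated compactness is needed, but $B^k$ pairs $\rho_{1,h}(t_k,\cdot)$ against the weight $\beta^1_{i,j}$, a piecewise-constant approximation of $\beta(X(t_k;\tau,\sig))$ whose dependence on the \emph{outer} time $t_k$ couples with the integration variable. I would resolve this by introducing the continuous, bounded weight $\tbeta(t,\tau,\sig):=\beta(X(t;\tau,\sig))$ (continuity coming from the regularity of $G$ and the flow, Proposition~\ref{propChgtVar}) and showing that its piecewise-constant interpolants converge uniformly; combined with the weak-$\ast$ convergence of $\rho_{1,h}$ and $\rho_{2,h}$, this lets the discrete boundary integral converge to $\int_0^T\int_{\d\Om}N(\sig)\tB(t,\rho_1,\rho_2)\tphi_1(t,t,\sig)\,d\sig\,dt$, while the $f$-term passes using the strong $L^1$ convergence of the $f_j^k$. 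Collecting the limits yields exactly \eqref{formulationFaibleTrho1}.

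Finally, uniqueness of the weak solution of \eqref{equationTrho1}-\eqref{equationTrho2} follows from the linearity of the problem together with the $L^1$ control: the difference of two solutions solves the same system with zero data, so the estimate underlying \eqref{estimationL1} (equivalently Corollary~\ref{propUnicite} through the equivalence of Theorem~\ref{propRhoRho1Rho2}) forces it to vanish. Since the limit $(\rho_1,\rho_2)$ is thereby uniquely characterized, the standard subsequence argument shows that the full families $\rho_{1,h}$ and $\rho_{2,h}$ converge weak-$\ast$, which completes the proof.
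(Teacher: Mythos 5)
Your proposal is correct and follows essentially the same route as the paper: uniform discrete $L^\infty$ bounds from Proposition \ref{propEstimDisc}, weak-$\ast$ extraction, discrete summation by parts to recover the two weak formulations, convergence of the nonlocal term $B^k$ as the key lemma, and uniqueness to upgrade to convergence of the whole family. One small inaccuracy: under \eqref{hypDonnees} the kernel $\beta$ is only $L^\infty(\Om)$, so $\beta(X(t;\tau,\sig))$ need not be continuous and your claim of \emph{uniform} convergence of its piecewise-constant interpolants is unjustified; the paper instead uses only the strong $L^1$ convergence of the cell averages $\beta_{h}\to\beta$ together with the uniform $L^\infty$ bounds on $\beta_h$ and $\rho_{1,h}$, $\rho_{2,h}$, which is exactly what is needed to pass to the limit in the product and repairs your step without further change.
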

\begin{proof}
Uniqueness of the solution is straightforward for the problem \eqref{equationTrho2} and follows from the $L^1$ estimate on $\rho_1$ which can be derived following the proof of the proposition \ref{propEstimCont}. The proof for the existence is rather classical and consists in passing to the limit in discrete weak formulations of \eqref{equationTrho1} and \eqref{equationTrho2}.
 From the previous proposition, we obtain that the families $\left\lbrace\rho_{1,h}\right\rbrace_{\delta t,\,\delta \sig}$ and $\left\lbrace\rho_{2,h}\right\rbrace_{\delta t,\,\delta x}$ are bounded in $L^\infty$ and thus there exist $\rho_1 \in L^\infty(\tQ_1)$, $\rho_2\in L^\infty(]0,T[\times\Om)$ and some subsequences $\rho_{1,h_n}$ and $\rho_{2,h_n}$ such that $\rho_{1,h_n}\underset{h_n\rightarrow 0}{\rightharpoonup} \rho_1$ and $\rho_{2,h_n}\underset{h_n\rightarrow 0}{\rightharpoonup} \rho_2$ for the weak-$\ast$ topology of $L^\infty$. We have to prove now that $(\rho_1,\rho_2)$ is a weak solution of \eqref{equationTrho1}-\eqref{equationTrho2}. The uniqueness of solutions to the equation implies then by standard argument that the whole sequence converges. It remains to prove that $(\rho_1,\rho_2)$ solves \eqref{equationTrho1}-\eqref{equationTrho2}. \\
\espace $\bullet$\textit{The function $\rho_2$ is a weak solution of \eqref{equationTrho2}.} Let $\phi_2$ be a test function for \eqref{equationTrho2}. We have
\begin{align*}\int_0^T\int_\Om \rho_{2,h_n}(t,Y)\d_t\phi_2(t,Y)dYdt& =\sum_{k=0}^{K}\sum_{l,m=1}^{L} \rho_2^k(l,m)\int_{t_k}^{t_{k+1}}\int_{x_l}^{x_{l+1}}\int_{\tt_m}^{\tt_{m+1}}\d_t \phi_2(t,x,\tt)d\tt dx dt \\
 & = \sum_{k=0}^{K}\sum_{l,m=1}^{L}\rho_2^k(l,m) \Phi_2(t_{k+1},l,m)(\delta x)^2-\sum_{k=0}^{K}\sum_{l,m=1}^{L}\rho_2^k(l,m) \Phi_2(t_{k},l,m)(\delta x)^2
\end{align*}
where we denoted $\Phi_2(t_{k},l,m):=\frac{1}{(\delta x )^2}\int_{x_l}^{x_{l+1}}\int_{\tt_m}^{\tt_{m+1}} \phi_2(t_k,x,\tt)d\tt dx$. Using the scheme ($\rho_2^k(l,m)$ is constant in $k$) and $\Phi_2(t_{K+1},l,m)=0$ since $t_{K+1}=T$, we obtain
\begin{align*}
\int_0^T\int_\Om & \rho_{2,h_n}(t,Y)\d_t\phi_2(t,Y)dYdt =\sum_{l,m=1}^{L}\rho^{K}_2(l,m)\Phi_2(T,l,m)(\delta x)^2 -  \sum_{l,m=1}^{L}\rho_2^0(l,m)\Phi_2(0,l,m)(\delta x)^2 \\
 & = -\sum_{l,m=1}^{L}\rho_2^0(l,m)\Phi_2(0,l,m)(\delta x)^2 = -\int_\Om \rho_{2,h_n}^0(Y)\phi(0,Y) \xrightarrow[h_n\rightarrow 0]{} - \int_\Om \rho^0(Y)\phi(0,Y)dY
\end{align*}
since $\rho_{2,h_n}^0\xrightarrow[h_n\rightarrow 0]{L^1}\rho^0$. Observing that the left hand side converges to $\int_0^T\int_\Om\rho_2 \d_t \phi_2(t,Y)dY dt$ gives the result. \\
\espace $\bullet$\textit{The function $\rho_1$ is a weak solution of \eqref{equationTrho1}.} Let $\phi_1$ be a test function for \eqref{equationTrho1}. Then the same calculation as above shows, with $\Phi_1(t_k,i,j):=\frac{1}{\delta t \delta \sig}\int_{\tau_{i-1}}^{\tau_i}\int_{\sig_j}^{\sig_{j+1}}\phi_1(t_k,\tau,\sig)d\sig d\tau$ and using that $\Phi_1(t_{K+1},i,j)=0$ as well as $\rho_1^{k+1}(i,j)=\rho_1^k(i,j)$ for $1\leq i \leq k$ and $1\leq j \leq M$
\begin{align*}
&\int_{\tQ_1} \rho_{1,h_n}(t,\tau,\sig) \d_t\phi_1(t,\tau,\sig)  d\sig d\tau dt = \sum_{k=1}^K\sum_{i=1}^k\sum_{j=1}^M \rho_1^k(i,j)\Phi_1(t_{k+1},i,j)\delta t \delta \sig - \sum_{k=1}^K\sum_{i=1}^k\sum_{j=1}^M \rho_1^k(i,j)\Phi_1(t_{k},i,j)\delta t \delta \sig \\
& = \sum_{i=1}^K\sum_{j=1}^M\rho_1^K(i,j)\Phi_1(t_{K+1},i,j)\delta t \delta \sig + \sum_{k=1}^{K-1}\sum_{i=1}^k\sum_{j=1}^M\rho_1^k(i,j)\Phi_1(t_{k+1},i,j)\delta t \delta \sig \\
& -\sum_{k=1}^{K-1}\sum_{i=1}^{k+1}\sum_{j=1}^M\rho_1^{k+1}(i,j)\Phi_1(t_{k+1},i,j)\delta t \delta \sig -\sum_{j=1}^M\rho_1^1(1,j)\Phi_1(t_1,1,j)\delta t \delta \sig \\
& = - \sum_{k=1}^{K-1}\sum_{j=1}^M\rho_1^{k+1}(k+1,j)\Phi_1(t_{k+1},k+1,j)\delta t \delta \sig -\sum_{j=1}^M\rho_1^1(1,j)\Phi_1(t_1,1,j)\delta t \delta \sig \\
& = - \sum_{k=1}^{K}\sum_{j=1}^M\left(N_j B^{k}(\rho_1^{k},\rho_2^{k})+f^{k}_j\right)\Phi_1(t_{k},k,j)\delta t \delta \sig
\end{align*}
Defining the following piecewise constant functions : $B_h(t,\rho_{1,h},\rho_{2,h})=B^k(\rho_1^k,\rho_2^k),\;N_h(\sig(s))=N_j,\;f_h(t,\sig(s))=f^k_j$ and $\Phi_{1,h}(t,\sig(s))=\Phi_1(t_k,k,j)$ on $[t_k,t_{k+1}[\times[s_j,s_{j+1}[$, the previous equality reads
$$\int_{\tQ_1} \rho_{1,h_n}(t,\tau,\sig) \d_t\phi_1(t,\tau,\sig)  d\sig d\tau dt=\int_{\delta t}^T\int_{\d\Om}(B_{h_n}(t,\rho_{1,h_n},\rho_{2,h_n}) N_{h_n}(\sig) +f_{h_n}(t,\sig))\Phi_{1,h_n}(t,\sig)d\sig dt.$$
We need the following lemma in order to conclude.
\begin{lem}
 We have 
$$B_{h_n}(t,\rho_{1,h_n},\rho_{2,h_n})\underset{h_n\rightarrow 0}{\rightharpoonup}\tB(t,\rho_1,\rho_2)\;\ast-L^\infty(]0,T[).$$
\end{lem}
\begin{proof}
We define the piecewise constant function $\beta^1_h(\tau,\sig)$ as for $N_h$ and $f_h$ and $\beta^2_h(X)=\beta^2_{l,m}$ for $X\in [x_{l},x_{l+1}[\times[\tt_{m},\tt_{m+1}[$. Let $t\in[t_k,t_{k+1}[$, then
$$B_h(t,\rho_{1,h},\rho_{2,h})=B^k(\rho_1^k,\rho_2^k) =\int_0^t\int_{\d\Om}\beta^1_h(\tau,\sig)\rho_{1,h}(t,\tau,\sig)d\tau d\sig - \sum_{j=1}^M\beta^1_{k,j}\rho_1^k(k,j)\delta t \delta \sig + \sum_{l,m=1}^L\beta^2_{l,m}\rho_2^k(l,m)(\delta x)^2$$
since we defined $\rho_h(t,\tau,\sig)=0$ for $\tau\in]t_k,t]$. Thus, for $\psi \in L^1(]0,T[)$ we have
\begin{align*}
\int_0^T B_h(t,\rho_{1,h},\rho_{2,h})\psi(t)dt & =\int_0^T\int_0^t\int_{\d\Om} \beta^1_h(\tau,\sig)\rho_{1,h}(t,\tau,\sig)\psi(t)d\sig d\tau dt - \delta t\sum_{k=0}^{K}\sum_{j=1}^M\beta^1_{k,j} \rho_1^k(k,j)\int_{t_k}^{t_{k+1}}\psi(t)dt \delta \sig \\
 & + \int_0^T\int_\Om \beta^2_h(X) \rho_{2,h}(t,X) \psi(t) dX dt
\end{align*}
and we obtain the result by using $\rho_{1,h_n}\underset{h_n\rightarrow 0}{\rightharpoonup}\rho_1\;\ast-L^\infty$, $\rho_{2,h_n}\underset{h_n\rightarrow 0}{\rightharpoonup}\rho_2\;\ast-L^\infty$, $\beta_{h_n}\xrightarrow[h_n\rightarrow 0]{L^1}\beta$, $||\beta_{h_n}||_{L^\infty}\leq C$ and noticing that the second term goes to zero in view of the $L^\infty$ bounds on $\rho_{1,h}$ (proposition \ref{propEstimDisc}) and $\beta$.
\end{proof}
Using the lemma as well as $N_{h_n},\,f_{h_n}\underset{h_n \rightarrow 0}{\rightharpoonup} N,f\;\ast-L^\infty$, $||N_{h_n}||_{L^\infty}\leq C$ and $\Phi_{1,h_n}\xrightarrow[h_n\rightarrow 0]{\C([0,T]\times\d\Om)}\phi(t,t,\sig)$, the previous calculations give
$$\int_{\tQ_1} \rho_{1,h_n}(t,\tau,\sig)\d_t\phi_1(t,\tau,\sig) d\sig d\tau dt\xrightarrow[h\rightarrow 0]{}-\int_0^T\int_{\d\Om}\left\lbrace N(\sig)\tB(t,\rho_1,\rho_2)+f(t,\sig)\right\rbrace \phi(t,t,\sig)d\sig dt.$$
On the other hand the left hand side also goes to $\int_{\tQ_1}\trho_1(t,\tau,\sig)\d_t \phi_1(t,\tau,\sig)d\sig d\tau dt$. This proves that $\rho_1$ verifies the definition \ref{defSolRho1Rho2} and ends the proof.
\end{proof}

\subsection{Error estimate}

We establish now an error estimate for the approximation of the equations \eqref{equationTrho1}-\eqref{equationTrho2}. For this section, we make the following assumptions on the data : 
\begin{equation}\label{hypDonneesReg}
 \rho^0 \in \Winf(\Om),\;\beta\in \Winf(\Om),\;N\in \Winf(\d\Om),\,N\geq 0,\,\int_{\d\Om}N(\sig)d\sig=1,\; f\in \Winf(]0,T[\times \d\Om)
\end{equation}
It can be noticed that in order to perform the weak convergence of the approximated solutions and establish theoretical existence to the continuous problem, we did not need to approximate the characteristics $X(t;\tau,\sig)$ of the equation. In view of the error estimate though, we need to use another approximation of the data than \eqref{approxDonnees}. For $\beta(X(t;\tau,\sig))$ we have to introduce an approximation $X_h(t;\tau,\sig)$ of the characteristics given by a numerical integrator of the ODE system \eqref{g1}-\eqref{g2}. Then we define
\begin{equation}\label{approxDonneesCont}
\begin{array}{c}
   \beta^1_{i,j}:=\beta(X_h(t_k;\tau_i,\sig_j)),\; \beta^2_{l,m}:=\beta(X_h(t_k;0,(x_l,\tt_m)) \\
  f_j^k:=f(t_k,\sig_j),\; N_j:=N(\sig_j).
  \end{array}
\end{equation}
For $g_1$ and $g_2$ being two continuous functions on $\tQ_1$ and $]0,T[\times\Om$ respectively, we define 
$$\begin{array}{ll}
\P_1 g_1(t,\tau,\sig(s))=g_1(t_k,\tau_i,\sig_j) &\text{ for }t\in[t_k,t_{k+1}[,\,\tau\in]\tau_{i-1},\tau_i],\;s \in [s_j,s_{j+1}[\\
  \P_1 g_1(t,\tau,\sig(s))=0  &\text{ for }t\in[t_k,t_{k+1}[,\,\tau \in ]t_k,t],\;s\in[s_j,s_{j+1}[ \\
  \P_2 g_2(t,x,\tt)=g_2(t_k,x_l,\tt_m)   & \text{ for }t\in[t_k,t_{k+1}[,\,x\in]x_{l},x_{l+1}],\;\tt \in [\tt_m,\tt_{m+1}[
  \end{array}.
$$

\begin{lem}[Projection error]\label{lemProjection}
Let $(g_1,g_2)\in \Winf(\tQ_1)\times\Winf(]0,T[\times\Om)$. Then there exists $C_{\P_1}$ and $C_{\P_2}$ such that
\begin{eqnarray}\begin{array}{c}
||g_1(t_k,\cdot) - \P_1 g_1(t_k,\cdot)||_{L^\infty(]0,t_k[)}\leq C_{\P_1} h,\quad
||g_2(t_k,\cdot) - \P_2 g_2(t_k,\cdot)||_{L^\infty(\Om)}\leq C_{\P_2} h.
\end{array}\end{eqnarray}
\end{lem}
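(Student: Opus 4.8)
The plan is to recognize both inequalities as standard piecewise-constant interpolation estimates. On each mesh cell, $\P_1 g_1(t_k,\cdot)$, respectively $\P_2 g_2(t_k,\cdot)$, is by construction the value of $g_1$, resp. $g_2$, at a single corner node of that cell, so the error is just the oscillation of a ($W^{1,\infty}$, hence Lipschitz) function across a cell of diameter $O(h)$. First I would record that a $W^{1,\infty}$ function is Lipschitz on each of the two relevant domains: $\Om=(1,b)^2$ is convex, and in the coordinates $(t,\tau,s)$ with $\sig=\sig(s)$ the set $\tQ_1$ reads $\{0\leq\tau\leq t\leq T,\ s\in[0,1]\}$, which is convex. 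Hence $g_2(t_k,\cdot)$ and $(\tau,s)\mapsto g_1(t_k,\tau,\sig(s))$ are Lipschitz, with constants controlled by their $\Winf$ norms (using $|\sig'|=1$ a.e. for the second one).

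For the estimate on $\P_2$, I would fix $k$ and a point $(x,\tt)\in\,]x_l,x_{l+1}]\times[\tt_m,\tt_{m+1}[$. By definition $\P_2 g_2(t_k,x,\tt)=g_2(t_k,x_l,\tt_m)$, and since $|x-x_l|\leq\delta x$ and $|\tt-\tt_m|\leq\delta x$, Lipschitz continuity gives $|g_2(t_k,x,\tt)-g_2(t_k,x_l,\tt_m)|\leq\sqrt2\,||\nabla g_2||_{L^\infty}\,\delta x\leq C_{\P_2}\,h$, with $C_{\P_2}=\sqrt2\,||\nabla g_2||_{L^\infty}$ and $\delta x\leq h$. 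Taking the supremum over all cells yields the second bound.

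For the estimate on $\P_1$, I would first note that evaluating at $t=t_k$ avoids the triangular region $\{\tau\in\,]t_k,t]\}$ on which $\P_1 g_1$ was set to $0$, so for every $\tau\in\,]0,t_k[$ the point falls in a genuine cell $]\tau_{i-1},\tau_i]\times[s_j,s_{j+1}[$ where $\P_1 g_1(t_k,\tau,\sig(s))=g_1(t_k,\tau_i,\sig_j)$. Using $|\tau-\tau_i|\leq\delta t$ together with $|\sig(s)-\sig_j|\leq|s-s_j|\leq\delta\sig$ (the arc-length parametrization being $1$-Lipschitz, which simultaneously absorbs the finitely many corners of $\d\Om$ without extra care), Lipschitz continuity gives $|g_1(t_k,\tau,\sig(s))-g_1(t_k,\tau_i,\sig_j)|\leq L_1(\delta t+\delta\sig)\leq L_1\,h$, where $L_1$ is the Lipschitz constant of $g_1$; I would then set $C_{\P_1}=L_1$.

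I do not expect a genuine obstacle here: the argument is routine interpolation theory. The only points deserving mild attention are verifying that $\Winf$ yields a \emph{uniform} Lipschitz constant on $\tQ_1$ (handled by its convexity in the $(t,\tau,s)$ coordinates rather than in the raw $(t,\tau,\sig)$ variables) and folding the boundary parametrization, including its non-smooth corner points, into the $1$-Lipschitz map $\sig$. Neither of these is a real difficulty, and both constants $C_{\P_1},C_{\P_2}$ come out independent of $h$ as required.
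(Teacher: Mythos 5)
Your proof is correct and is exactly the classical piecewise-constant interpolation estimate that the paper invokes without proof (it explicitly omits the argument as ``classical''). The two points you single out --- obtaining a uniform Lipschitz constant from $\Winf$ via the convexity of $\tQ_1$ in the $(t,\tau,s)$ coordinates, and using the $1$-Lipschitz arc-length parametrization $\sig$ to handle the corners of $\d\Om$ --- are treated correctly, and your constants are independent of $h$ as required.
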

We don't give the proof of this lemma since it is classical. We define $e_{1,h}:=\rho_{1,h}-\P_1\trho_1$ and $e_{2,h}:=\rho_{2,h}-\P_2\trho_2$ the errors of the schemes, with $(\trho_1,\trho_2)$ solving the problem \eqref{equationTrho1}-\eqref{equationTrho2}. From the equation \eqref{equationTrho1} we have
\begin{equation*}
 \left\lbrace\begin{array}{l}
  \trho_1(t_{k+1},\tau_i,\sig_j)=\trho_1(t_{k},\tau_i,\sig_j),\quad 0\leq k\leq K,\,0\leq i\leq k,\,1\leq j \leq M \\
  \trho_1(t_{k+1},\tau_{k+1},\sig_j)=N(\sig_j)\tB(t_{k+1},\trho_1,\trho_2) + f(t_k,\sig_j)
 \end{array}\right.
\end{equation*}
and thus, subtracting this to \eqref{schemaRho1} and denoting $e_1^k(i,j)=e_{1,h}(t_k,\tau_i,\sig_j)$ we obtain
\begin{equation}\label{schemaErreur}
 \left\lbrace\begin{array}{l}
  e_1^{k+1}(i,j)=e_1^k(i,j),\quad 0\leq k\leq K,\,0\leq i\leq k,\,1\leq j \leq M \\
  e_1^{k+1}(k+1,j)=N_j E^{k+1} + r_j^{k+1}
 \end{array}\right.
\end{equation}
with 
\begin{align*}
E^{k+1} & =\sum_{i=1}^{k}\sum_{j=1}^{M}\beta_{i,j}^1e_1^{k+1}(i,j)\delta t\delta \sig + \sum_{l,m=1}^{L} \beta_{l,m}^2e_2^{k+1}(l,m)(\delta x)^2 \\
r_j^{k+1} & =N_j\left(B^{k+1}\left(\left(\trho_1(t_{k+1},\tau_i,\sig_j)\right)_{\scriptscriptstyle i,j},\left(\trho_2(t_{k+1},x_l,\tt_m)\right)_{\scriptscriptstyle l,m}\right) - \tB(t_{k+1},\trho_1,\trho_2)\right).
\end{align*}
Hence the truncation error of the scheme $r_j^{k+1}$ comes only from the quadrature error coming from the approximation of the integral in $\tB(t_k,\trho_1,\trho_2)$. 
\begin{lem}[Truncation error]
Assume \eqref{hypDonneesReg} and that $(\beta\circ X_1)\trho_1 \in \Winf(\tQ_1),\; (\beta\circ X_2)\trho_2\in\Winf(]0,T[\times\Om)$. Then there exists $C_r$ such that 
$$\underset{k,j}{\max}\;|r^k_j|\leq C_r h.$$
\end{lem}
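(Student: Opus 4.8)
The plan is to reduce the statement to two elementary error bounds---a rectangle-rule quadrature error and a characteristics-sampling error---each of order $h$. Since $N\in\Winf(\d\Om)\subset L^\infty(\d\Om)$ by \eqref{hypDonneesReg}, one has $|r_j^{k+1}|\leq \|N\|_{L^\infty}\,|D^{k+1}|$ with
\[
D^{k+1}:=B^{k+1}\big((\trho_1(t_{k+1},\tau_i,\sig_j))_{i,j},(\trho_2(t_{k+1},x_l,\tt_m))_{l,m}\big)-\tB(t_{k+1},\trho_1,\trho_2),
\]
so it suffices to prove $\sup_{k}|D^{k+1}|\leq C h$. The quantity $D^{k+1}$ compares a Riemann sum built on the \emph{exact} nodal values of $\trho_1,\trho_2$, but with $\beta$ sampled along the \emph{approximate} flow $X_h$ (recall \eqref{approxDonneesCont}), against the two exact integrals defining $\tB$. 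I would therefore split $D^{k+1}=D^{k+1}_{\mathrm{geo}}+D^{k+1}_{\mathrm{quad}}$ by inserting the same Riemann sums with $\beta$ evaluated along the \emph{exact} characteristics $X$.

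For the sampling part $D^{k+1}_{\mathrm{geo}}$---where the two sums differ only through $\beta(X_h(t_{k+1};\cdot))$ versus $\beta(X(t_{k+1};\cdot))$ at the nodes---I would use $|\beta(X_h)-\beta(X)|\leq \|\nabla\beta\|_{L^\infty}\,|X_h-X|$ (with $\beta$ Lipschitz by \eqref{hypDonneesReg}), the consistency of the ODE integrator in the form $\max_{\mathrm{nodes}}|X_h-X|\leq C_X h$, and the $L^\infty$ bounds on $\trho_1,\trho_2$. Since the nodal weights sum to the finite measures $\sum_{i,j}\delta t\,\delta\sig\leq T|\d\Om|$ and $\sum_{l,m}(\delta x)^2=|\Om|$, this gives $|D^{k+1}_{\mathrm{geo}}|\leq C h$ uniformly in $k$.

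For the quadrature part $D^{k+1}_{\mathrm{quad}}$ I would observe that, for the boundary term, the remaining Riemann sum is exactly the integral of the piecewise-constant projection of the integrand,
\[
\sum_{i,j}\beta(X(t_{k+1};\tau_i,\sig_j))\trho_1(t_{k+1},\tau_i,\sig_j)\,\delta t\,\delta\sig=\int_0^{t_{k+1}}\!\!\int_{\d\Om}\P_1\big[(\beta\circ X_1)\trho_1\big](t_{k+1},\tau,\sig)\,d\sig\,d\tau,
\]
and symmetrically the $\trho_2$-sum equals the integral of $\P_2[(\beta\circ X_2)\trho_2]$ over $\Om$. Subtracting the exact integrals and invoking the projection-error Lemma \ref{lemProjection} on the integrands $(\beta\circ X_1)\trho_1\in\Winf(\tQ_1)$ and $(\beta\circ X_2)\trho_2\in\Winf(]0,T[\times\Om)$---which is precisely the regularity assumed in the statement---yields $|D^{k+1}_{\mathrm{quad}}|\leq (C_{\P_1}+C_{\P_2})(T|\d\Om|+|\Om|)\,h$. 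Collecting the constants into a single $C_r$ and taking the supremum over $k,j$ then finishes the proof.

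The only genuinely load-bearing hypothesis is that the \emph{products} $(\beta\circ X_1)\trho_1$ and $(\beta\circ X_2)\trho_2$ are Lipschitz: knowing merely $\beta\in\Winf$ and $\trho_i\in L^\infty$ would not let me apply Lemma \ref{lemProjection}, which is exactly why the lemma hypothesizes the products to lie in $\Winf$. The expected point of care, rather than difficulty, is the bookkeeping of the $\tau$-grid: the range $]0,t_{k+1}[$ of integration must be matched with the summation index, and the convention that the relevant nodal values vanish for $\tau\in]t_k,t_{k+1}]$ isolates a single strip of width $\delta t$ that contributes only $O(\delta t)=O(h)$ and is absorbed into $C_r$. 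One also tacitly needs the integrator to be consistent of order at least one; a lower order would degrade the estimate, while a higher order brings nothing, since the quadrature term is already $O(h)$.
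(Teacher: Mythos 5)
Your proposal is correct and follows essentially the same route as the paper: bound $|r^k_j|$ by $\|N\|_{L^\infty}$ times the discrepancy, split that discrepancy into a characteristics-sampling error (controlled by the Lipschitz bound on $\beta$ and the consistency of the ODE integrator) plus a rectangle-rule quadrature error (identified as the projection error of the products $(\beta\circ X_1)\trho_1$ and $(\beta\circ X_2)\trho_2$, handled by Lemma \ref{lemProjection}), with the leftover $\delta t$-strip absorbed as an $O(h)$ term. The only cosmetic difference is that you bound the sampling term via $\|\trho_s\|_{L^\infty}$ times the total measure where the paper uses the $L^1$ norms of $\P_s\trho_s$; these are equivalent here.
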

\begin{proof}
 We have 
\begin{align*}r_j^k & =N_j[\sum_{i=1}^{k-1}\sum_{j=1}^{M}\left(\beta_{i,j}^1-\beta(X_1(t_k;\tau_i,\sig_j))\right) \trho_1(t_k,\tau_i,\sig_j)\delta t \delta \sig + \sum_{l,m=1}^{L}\left(\beta_{l,m}^2-\beta(X_2(t_k;x_l,\tt_m)) \right)\trho_2(t_k,x_l,\tt_m)\left(\delta x\right)^2 \\
 & + \sum_{i=1}^{k-1}\sum_{j=1}^{M}\beta(X_1(t_k;\tau_i,\sig_j)) \trho_1(t_k,\tau,\sig)\delta t \delta \sig + \sum_{l,m=1}^{L}\beta(X_2(t_k;x_l,\tt_m))\trho_2(t_k,x_l,\tt_m)\left(\delta x\right)^2 \\
 & - \int_0^{t_{k-1}}\int_{\d\Om}\beta(X_1(t_k;\tau,\sig))\trho_1(t_k,\tau,\sig)d\tau d\sig  - \int_\Om \beta(X_2(t_k,Y))\trho_2(t_k,Y)dY \\
& - \int_{t_{k-1}}^{t_k}\int_{\d\Om}\beta(X_1(t_k;\tau,\sig))\trho_1(t_k,\tau,\sig)d\tau d\sig].
\end{align*}
Thus
\begin{align*}
 \left|r_j^k\right| \leq & ||N||_{L^\infty}\{
||\beta||_{W^{1,\infty}}\left(||X_{1,h}-\P_1 X_1||_{L^\infty}||\P_1\trho_1||_{L^1} + ||X_{2,h}-\P_2 X_2||_{L^\infty}||\P_2\trho_2||_{L^1}\right) \\
& + \sum_{i=1}^{k-1}\sum_{j=1}^M\int_{\tau_{i-1}}^{\tau_i}\int_{\sig_j}^{\sig_{j+1}}\left| \P_1\left[\left(\beta\circ X_1\right)\trho_1\right](t_k,\tau,\sig) -\left(\beta\circ X_1\right)\trho_1(t_k,\tau,\sig)\right|d\tau d\sig \\ 
& + \sum_{l,m=1}^{L}\int_{x_l}^{x_{l+1}}\int_{x_m}^{x_{m+1}}\left| \P_2\left[\left(\beta\circ X_2\right)\trho_2\right](t_k,Y) -\left(\beta\circ X_2\right)\trho_2(t_k,x,\tt)\right|dx d\tt + \left|\left|\left(\beta\circ X_1\right)\trho_1\right|\right|_{L^\infty} h\}.
\end{align*}
Using the lemma \ref{lemProjection} and the $L^1$ a priori estimate of proposition \ref{propEstimCont} gives the result.
\end{proof}
\begin{rem}[Order of the truncation error]\label{remErrTronc}
 In order to have a better order for the truncation error we could use a more sophisticated quadrature method like for instance the trapezoid method on $\Om$ for $\trho_2$ and on $[0,t_{k-1}[\times \d\Om$ for $\trho_1$ (completed by a left rectangle method on $[t_{k-1}, t_k[\times \d\Om$). Adapting the previous proof shows that if the numerical integrator used for the characteristics has order larger than $2$, then the truncation error would have order $2$ (order of the trapezoid method).
\end{rem}

\begin{prop}[Error estimate]\label{propErrTrho1Trho2}
Assume \eqref{hypDonneesReg} and that $(\trho_1,\trho_2)\in\Winf(\tQ_1)\times\Winf(]0,T[\times \Om)$ is a regular solution of \eqref{equationTrho1}-\eqref{equationTrho2}. Let $\rho_{1,h}$ and $\rho_{2,h}$ solve \eqref{schemaRho1} and \eqref{schemaRho2}. Then there exists some constants $\tilda{C}_1$ and $\tilda{C}_2$ such that
\begin{eqnarray}\begin{array}{c}
||\rho_{1,h}(t_k,\cdot)-\trho_1(t_k,\cdot)||_{L^1(]0,t_k[)}\leq \tilda{C}_1 h,\quad
||\rho_{2,h}(t_k,\cdot)-\trho_2(t_k,\cdot)||_{L^1(\Om)}\leq \tilda{C}_2 h
\end{array}\end{eqnarray}
\end{prop}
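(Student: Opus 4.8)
The plan is to estimate the two components of the error separately, dealing first with the trivial $\rho_2$-part and then closing an $L^1$ recursion for the $\rho_1$-part that mimics the discrete a priori argument of Proposition \ref{propEstimDisc}, with the truncation error lemma and the $\rho_2$-estimate supplying the inhomogeneous data. First I would dispose of $e_{2,h}=\rho_{2,h}-\P_2\trho_2$. Because the scheme \eqref{schemaRho1} freezes $\rho_2^k(l,m)=\rho_2^0(l,m)$ and the exact $\trho_2(t,\cdot)=\rho^0$ is likewise constant in time, the whole $\rho_2$-error is the initialisation error, i.e. the gap between the cell average defining $\rho_2^0(l,m)$ and the nodal value $\rho^0(x_l,\tt_m)$. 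Under \eqref{hypDonneesReg} this gap is $O(h)$ pointwise, so $||e_{2,h}(t_k,\cdot)||_{L^1(\Om)}\leq \tilda{C}_2 h$; combined with the projection bound of Lemma \ref{lemProjection} this already yields the second estimate of the proposition.

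Next, for the boundary part set $\mathcal{E}^k:=||e_{1,h}(t_k,\cdot)||_{L^1(]0,t_k[\times\d\Om)}=\sum_{i=1}^k\sum_j|e_1^k(i,j)|\delta t\,\delta\sig$. The key observation is that, despite appearances, the error scheme \eqref{schemaErreur} is \emph{explicit}: since $e_1^{k+1}(i,j)=e_1^k(i,j)$ for $i\leq k$, the only new contribution at level $k+1$ is the diagonal row $e_1^{k+1}(k+1,j)=N_j E^{k+1}+r_j^{k+1}$, and $E^{k+1}$ involves the errors only through levels $\leq k$ together with the static $\rho_2$-error. I would therefore compute $\mathcal{E}^{k+1}=\mathcal{E}^k+\delta t\sum_j|N_jE^{k+1}+r_j^{k+1}|\delta\sig$, bound the sum using $\sum_j N_j\delta\sig=||N||_h$ and, via the truncation error lemma, $\sum_j|r_j^{k+1}|\delta\sig\leq \max_j|r_j^{k+1}|\leq C_r h$, and control the amplification factor by $|E^{k+1}|\leq ||\beta||_\infty\bigl(\mathcal{E}^k+||e_{2,h}(t_{k+1},\cdot)||_{L^1}\bigr)\leq ||\beta||_\infty(\mathcal{E}^k+\tilda{C}_2 h)$.

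This produces a recursion of the form $\mathcal{E}^{k+1}\leq(1+||N||_h\,||\beta||_\infty\,\delta t)\,\mathcal{E}^k+(||N||_h\,||\beta||_\infty\,\tilda{C}_2+C_r)\,h\,\delta t$, whose solution by the discrete Gronwall lemma, with $\mathcal{E}^0=0$ since there is no boundary contribution at $t_0=0$, gives $\mathcal{E}^k\leq\tilda{C}_1 h$; the constant stays uniform because $||N||_h=1+C_\sig\delta\sig$ remains bounded as $h\to0$. This is the first estimate.

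The hard part will not be the Gronwall bookkeeping but the truncation error lemma it rests on: getting $r_j^{k+1}=O(h)$ requires both the regularity hypothesis $(\beta\circ X_1)\trho_1\in\Winf(\tQ_1)$, $(\beta\circ X_2)\trho_2\in\Winf(]0,T[\times\Om)$ and a sufficiently accurate integrator $X_h$ of the characteristics, together with the continuous $L^1$ a priori bound of Proposition \ref{propEstimCont} to keep the quadrature remainders summable over the $O(1/\delta t)$ time levels. This is also precisely where the point-value discretisation \eqref{approxDonneesCont}, rather than \eqref{approxDonnees}, is essential: with $N_j=N(\sig_j)$ and $f^k_j=f(t_k,\sig_j)$ the $N$- and $f$-contributions cancel exactly in the subtraction, so that only the quadrature error in $\tB$ survives in $r_j^{k+1}$, which is why neither $f$ nor a projection error on $N$ appears in the final constants. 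Once that lemma is in hand, the remainder is the routine recursion above.
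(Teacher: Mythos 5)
Your proposal is correct and follows essentially the same route as the paper: the $\rho_2$-error reduces to the $O(h)$ gap between cell averages and nodal values of $\rho^0$, and the $\rho_1$-error is closed by the explicit recursion $||e_{1,h}(t_{k+1},\cdot)||_{L^1}\leq(1+\delta t\,||\beta||_{\infty}||N||_h)\,||e_{1,h}(t_k,\cdot)||_{L^1}+C h\,\delta t$ followed by a discrete Gronwall lemma, exactly as in the paper. Your side remarks on why the point-value discretisation \eqref{approxDonneesCont} makes the $N$- and $f$-contributions cancel in the truncation error are consistent with how the paper sets up \eqref{schemaErreur}.
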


\begin{proof}In view of the lemma \ref{lemProjection}, it is sufficient to prove the proposition with $\P_s\trho_s(t_k,\cdot)$ instead of $\trho_s(t_k,\cdot)$ (with $s=1,2$). For the second estimate, we notice that 
$$||\rho_{2,h}(t_k,\cdot)-\P_2\trho_2(t_k,\cdot)||_{L^1(\Om)}=||e_{2,h}(t_k,\cdot)||_{L^1(\Om)}=\sum_{l,m}\left|e_2^k(l,m)\right|(\delta x)^2=\sum_{l,m}\left|\rho^0_2(l,m) - \rho^0(x_l,\tt_m)\right|(\delta x)^2$$
and the result follows from the definition of $\rho^0_2(l,m)$. For the first one, we have
 $$||\rho_{1,h}(t_k,\cdot)-\P_1\trho_1(t_k,\cdot)||_{L^1(]0,t_k[)}=||e_{1,h}(t_k,\cdot)||_{L^1(]0,t_k[)}=\sum_{i=1}^k\sum_{j=1}^{M}\left|e_1^k(i,j)\right|\delta t \delta \sig.$$
We can compute, using \eqref{schemaErreur}
\begin{align*}
||e_{1,h}(t_{k+1},\cdot)||_{L^1} & \leq \sum_{i=1}^k\sum_{j=1}^{M}\left|e_1^{k+1}(i,j)\right|\delta t \delta \sig +  \left| E^{k+1} \right|\delta t \sum_{j=1}^{M} N_j \delta \sig  + \delta t \sum_{j=1}^M \left| r_j^{k+1} \right|\delta \sig \\
  & \leq ||e_{1,h}(t_k,\cdot)||_{L^1} + \delta t||\beta||_{\infty}||N||_h\left\lbrace||e_{1,h}(t_{k},\cdot)||_{L^1} + ||e_{2,h}(t_{k+1},\cdot)||_{L^1} \right\rbrace + C_r h \delta t \\
 & \leq (1+\delta t||\beta||_{\infty}||N||_h)||e_{1,h}(t_k,\cdot)||_{L^1} + \tilda{C}_2||\beta||_{\infty}||N||_h h^p \delta t + C_r h\delta t
\end{align*}
and conclude using a discrete Gronwall lemma.
\end{proof}

\begin{rem}[Order of the error]~\\
 \espace $\bullet$ By looking more carefully at the propagation of errors in the proof, we see that if we set $\rho_2^0(l,m)=\rho^0(l,m)$ (which is valid under \eqref{hypDonneesReg}), the error on $\trho_2$ comes only from the projection error. \\
\espace $\bullet$ If in addition, we follow the remark \ref{remErrTronc} for the approximation of the data, then the error between  $\rho_{1,h}$ and $\P_1\trho_1$ would be of order $2$ if we had used a trapezoid method for the integral term in $\tB(t_k,\trho_1,\trho_2)$.
\end{rem}

\subsection{Application to the approximation of the problem \eqref{equationNonAutonome}}
We explain now how we approximate the solution of \eqref{equationNonAutonome} from the approximation of the solutions of equations \eqref{equationTrho1}-\eqref{equationTrho2} given by the schemes \eqref{schemaRho1}, \eqref{schemaRho2}.
We translate formula \eqref{rhoRho1Rho2} at the discrete level thanks to $\trho_{1,h}$, $\trho_{2,h}$ given by \eqref{extensionRho1kRho2k} and the solutions $\trho_1^k(i,j)$, $\trho_2^k(i,j)$ of the schemes \eqref{schemaRho1} and \eqref{schemaRho2} :
\begin{align}\label{defRhoh}
\rho_{h}(t,X) & :=\underset{:=\rho_{1,h}}{\underbrace{\trho_{1,h}(t,\tau^t(X),\sig^t(X))J_{1,h}^{-1}(t,\tau^t(X),\sig^t(X))\mathbf{1}_{X\in\Om_1^t}}} + \underset{:=\rho_{2,h}}{\underbrace{\trho_{2,h}(t,Y(X))J_{2,h}^{-1}(t,Y(X))\mathbf{1}_{X\in\Om_2^t}}}.
\end{align}
The jacobians of the changes of variables $J_1(t;\tau,\sig)=|G(\tau,\sig)\cdot\nuu (\sig)|e^{\int_\tau^t \div \,
G(u,X(u;\tau,\sig))du}\text{ and }J_2(t;Y)=e^{\int_0^t \div \,
G(u,X(u;0,Y))du}$ are approximated respectively by $J_{1,h}$ and $J_{2,h}$, piecewise constant functions constructed similarly as in \eqref{extensionRho1kRho2k} through $J_{1}^k(i,j):=e^{\T_1(k,i,j)}$ and $J_{2}^k(l,m):=e^{\T_2(k,l,m)}$, where $\T_1$ and $\T_2$ are one-dimensional quadrature methods such that $\T_1(k,i,j)\simeq \int_{\tau_i}^{t_k}\div G(X(s;\tau_i,\sig_j))ds $ and $\T_2(k,l,m)\simeq\int_0^{t_k}\div G(X(s;0,(x_l,\tt_m)))ds$. The errors of these quadrature methods are denoted by $r_1$, $r_2$ and are assumed to be of order $\alpha_1$, $\alpha_2$ :
$$\barre{r_1}:=\underset{k,i,j}{\max}\,\left|r_1(k,i,j)\right|\leq C_{q}(\delta t)^{\alpha_1},\;\barre{r_2}:=\underset{k,l,m}{\max}\,\left|r_2(k,l,m)\right|\leq C_{q}(\delta t)^{\alpha_2}.$$
Hence, we have
\begin{equation}\label{defJacApp}
 J_{1}^k(i,j)=J_1(t_k,\tau_i,\sig_j)e^{-r_1(k,i,j)},\;J_{2}^k(l,m):=e^{\T_2(k,l,m)}=J_2(t_k,x_l,\tt_m)e^{-r_2(k,l,m)}.
\end{equation}
We define the following meshes :
$$\begin{array}{c}
V_1(k,i,j)=\{(t,X(t;\tau,\sig(s)));\;t\in[t_k,t_{k+1}[,\,\tau\in]\tau_{i-1},\tau_i],\,s\in[s_j,s_{j+1}[\} \\
V_2(k,l,m)=\{(t,X(t;0,(x_l,\tt_m)));\; t \in[t_k,t_{k+1}[,\,x\in[x_l,x_{l+1}[,\,\tt\in[\tt_m,\tt_{m+1}[\}   
  \end{array}
$$
and, for a function $g\in\C([0,T]\times\bOm)$
\begin{equation}\label{defPg}
\P g(t,X)=g(t_k,X(t_k;\tau_i,\sig_j))\mathbf{1}_{(t,X)\in V_1(k,i,j)}+g(t_k,X(t_k;0,(x_l,\tt_m)))\mathbf{1}_{(t,X)\in V_2(k,l,m)}.
\end{equation}

\begin{rem}\label{remProj}
 In the same way as the lemma \ref{lemProjection}, there exists a constant $C_\P$ such that for all function $g\in \Winf(]0,T[\times \Om)$
$$||g-\P g||_{L^1(]0,T[\times \Om)}\leq C_\P h.$$
\end{rem}

\begin{thr}
 Suppose that $\rho\in\Winf(]0,T[\times\Om)$ is a regular solution of the equation \eqref{equationNonAutonome} and let $\rho_h$ be defined by \eqref{defRhoh}. Then there exists a constant $C$ such that
$$\underset{t\in[0,T]}{\sup}\,||\rho_h(t,\cdot)-\rho(t,\cdot)||_{L^1(\Om)}\leq C h.$$
\end{thr}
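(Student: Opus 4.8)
The plan is to transport the estimate back to the straightened variables, where Proposition~\ref{propErrTrho1Trho2} already controls the errors on $\trho_1$ and $\trho_2$. Both the numerical density $\rho_h$ (formula~\eqref{defRhoh}) and the exact solution $\rho$ (formula~\eqref{rhoRho1Rho2} of Theorem~\ref{propRhoRho1Rho2}) are written through the same pair of changes of variables, the only differences being $\trho_{1,h},\trho_{2,h}$ versus $\trho_1,\trho_2$ and $J_{1,h},J_{2,h}$ versus $J_1,J_2$. So at fixed $t$ I would first split $\Om=\Om_1^t\cup\Om_2^t$ and write
\begin{align*}
\int_\Om|\rho_h-\rho|\,dX = \int_{\Om_1^t}\bigl|\trho_{1,h}J_{1,h}^{-1}-\trho_1 J_1^{-1}\bigr|\,dX + \int_{\Om_2^t}\bigl|\trho_{2,h}J_{2,h}^{-1}-\trho_2 J_2^{-1}\bigr|\,dX,
\end{align*}
then apply the changes of variables $X_1,X_2$ of Proposition~\ref{propChgtVar}, which replace $dX$ by $J_1\,d\tau\,d\sig$ and $J_2\,dY$. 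The first integral becomes $\int_0^t\int_{\d\Om}\bigl|\trho_{1,h}J_1 J_{1,h}^{-1}-\trho_1\bigr|\,d\sig\,d\tau$, and analogously for the second.

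For this integral I would use the triangle inequality
\begin{align*}
\bigl|\trho_{1,h}J_1 J_{1,h}^{-1}-\trho_1\bigr| \leq |\trho_{1,h}-\trho_1|\,J_1 J_{1,h}^{-1} + |\trho_1|\,\bigl|J_1 J_{1,h}^{-1}-1\bigr|.
\end{align*}
Thanks to \eqref{hypG} and the boundedness of $\div G$, the formula~\eqref{formule_jacobien} shows that $J_1$ is bounded above and below, hence $J_1 J_{1,h}^{-1}$ is uniformly bounded; the first term therefore integrates to at most $C\,\|\rho_{1,h}(t,\cdot)-\trho_1(t,\cdot)\|_{L^1}=O(h)$ by Proposition~\ref{propErrTrho1Trho2}, whose regularity hypotheses on $\trho_1,\trho_2$ follow from $\rho\in\Winf$ through the bi-Lipschitz change of variables. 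For the second term, \eqref{defJacApp} gives $J_{1,h}=J_1(t_k,\tau_i,\sig_j)e^{-r_1}$ on each cell, so that
\begin{align*}
J_1 J_{1,h}^{-1}-1 = \frac{\bigl[J_1(t,\tau,\sig)-J_1(t_k,\tau_i,\sig_j)\bigr] + J_1(t_k,\tau_i,\sig_j)\bigl(1-e^{-r_1}\bigr)}{J_{1,h}};
\end{align*}
the first bracket is $O(h)$ by Lipschitz continuity of $J_1$ and the second is $O(|r_1|)=O((\delta t)^{\alpha_1})$, both $O(h)$ provided $\alpha_1\geq1$. Since $\trho_1$ is bounded, this term also contributes $O(h)$.

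The $\Om_2^t$ contribution is treated identically, using $J_2=e^{\int_0^t\div G}$ (again bounded above and below), the $\rho_{2,h}$ part of Proposition~\ref{propErrTrho1Trho2}, and the quadrature error $r_2$ with $\alpha_2\geq1$. Summing gives $\|\rho_h(t,\cdot)-\rho(t,\cdot)\|_{L^1(\Om)}\leq Ch$ at the grid times $t_k$. For a general $t\in[t_k,t_{k+1})$ one uses that $\trho_1$ is constant in time (it solves $\d_t\trho_1=0$) and that $\rho_{1,h}$ is piecewise constant in time: the only new contribution is the strip $\tau\in\,]t_k,t[$, on which $\rho_{1,h}$ vanishes while $\trho_1$ is bounded, so its measure $O(\delta t)$ makes it $O(h)$. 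Taking the supremum over $t$ finishes the argument.

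I expect the main obstacle to be the bookkeeping of the Jacobian approximation error: one must simultaneously control the piecewise-constant projection error of $J_1,J_2$ and the quadrature error $r_1,r_2$ of the exponential integrals in \eqref{formule_jacobien}, and check that the deviation of the ratio $J_1 J_{1,h}^{-1}$ from $1$ is genuinely $O(h)$ in $L^\infty$. By contrast, the reduction to the straightened variables is purely structural once Theorem~\ref{propRhoRho1Rho2} and Proposition~\ref{propErrTrho1Trho2} are available.
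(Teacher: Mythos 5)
Your proposal follows essentially the same route as the paper: split $\Om$ into $\Om_1^t$ and $\Om_2^t$, pull back through the changes of variables $X_1,X_2$, use a triangle inequality separating the density error (controlled by Proposition~\ref{propErrTrho1Trho2}) from the Jacobian-ratio error, and bound $\abs{J_1 J_{1,h}^{-1}-1}$ by combining the Lipschitz projection error of $J_1$ with the quadrature errors $r_1,r_2$, exactly as in~\eqref{defJacApp}. The only cosmetic difference is that the paper inserts the projection $\P\rho$ as an intermediate comparison (via the remark on $\P$) whereas you compare directly with $\trho_1,\trho_2$; both are correct.
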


\begin{proof}
 In view of the remark \ref{remProj}, it is again sufficient to prove the proposition with $\P\rho$ instead of $\rho$. Let $t\in[t_k,t_{k+1}[$, then $||\rho_h(t,\cdot)-\P\rho(t,\cdot)||_{L^1(\Om)}=||\rho_{1,h}(t_k,\cdot)-\P\rho_1(t_k,\cdot)||_{L^1(\Om_1^{t_k})}+||\rho_{2,h}(t_k,\cdot)-\P\rho_2(t_k,\cdot)||_{L^1(\Om_2^{t_k})}$ with $\rho_s(t,X):=\rho(t,X)\mathbf{1}_{X\in\Om_s^t}$ ($s=1,2$). We do the proof only for $\rho_1$ since it is similar for $\rho_2$. We also don't write the dependency in $\sig$ in order to avoid heavy notations. To obtain the complete proof it suffices to add integrals with respect to $\sigma$ in the following and $\sig$ in all the functions. Doing the change of variables $X_1$ we have, noticing that $\P\rho_1(t_k,X(t_k;\tau))=\P_1\trho_1(t_k,\tau)\P_1 J_1^{-1}(t_k,\tau)$
\begin{align*}
||\rho_{1,h}(t_k,\cdot) & -\P\rho_1(t_k,\cdot)||_{L^1(\Om_1^{t_k})}=\int_0^{t_k}\left|\trho_{1,h}(t_k,\tau)J_{1,h}^{-1}(t_k,\tau) - \P_1\trho_1(t_k,\tau)\P_1 J_1^{-1}(t_k,\tau)\right|J_1(t_k,\tau)d\tau \\
 & \leq \int_0^{t_k}\left|\trho_{1,h}(t_k,\tau)\right|\left|J_{1,h}^{-1}(t_k,\tau)J_1(t_k,\tau) - 1\right| d\tau + \int_0^{t_k}\left|\trho_{1,h}(t_k,\tau) - \P_1\trho_1(t_k,\tau)\right| d\tau + \\
 & + \int_0^{t_k}\left|\P_1\trho_1(t_k,\tau)\right|\left|1-\P_1 J_1^{-1}(t_k,\tau)J_1(t_k,\tau)\right| d\tau.
\end{align*}
Now we have, using the definition \eqref{defJacApp}
$$\left|J_{1,h}^{-1}J_1 - 1\right|=\left|\P J_1^{-1}e^{-r_1}J_1 - 1\right|\leq \left|e^{\barre{r_1}}\right|\frac{1}{\abs{\P J_1}}\abs{J_1 - \P J_1} + \abs{e^{-r_1}-1}$$
Thus, since $\left|\left|\frac{1}{J_1}\right|\right|_{L^\infty}<\infty$ from formula \eqref{formule_jacobien} and the fact that $G\cdot \nu\geq m>0$, and using $\abs{e^{-r_1}-1}\leq 2\barre{r_1}$, there exists $C_J$ such that 
$$||J_{1,h}^{-1}(t_k,\tau)J_1(t_k,\tau) - 1||_{L^\infty}\leq C_J h,\text{ and }||1-\P_1 J_1^{-1}(t_k,\tau)J_1(t_k,\tau)||_{L^\infty}\leq C_J h .$$
The last inequality comes from the lemma \ref{lemProjection} since $J_1 \in \W^{1,\infty}$ from the formula \eqref{formule_jacobien}. Using then the continuous and discrete \textit{a priori} $L^1$ estimates and the proposition \ref{propErrTrho1Trho2} gives the result.
\end{proof}

\begin{rem}
 In the case of less regularity on the solution, we still have $\rho_{h}\underset{h\rightarrow 0}{\rightharpoonup}\rho,\;\ast-L^\infty(]0,T[\times\Om)$. Indeed, we write $\rho_h = \trho_{1,h}J_{1,h}^{-1} + \trho_{2,h}J_{2,h}^{-1}=\trho_{1,h}J_1^{-1} + \trho_{2,h}J_2^{-1} + \trho_{1,h}(J_{1,h}-J_1) + \trho_{2,h}(J_{2,h}-J_2)$. Then we use that for $s=1,2$ $J_{s,h}^{-1}\xrightarrow[h\rightarrow 0]{L^1}J_s^{-1}$ as well as $\left|\left|J_{s,h}^{-1}\right|\right|_{L^\infty}\leq C e^{\barre{r}_s}$ with $C$ a constant. Using the theorem \ref{existence} for the convergence of $\trho_{1,h}$ and $\trho_{2,h}$ gives the result.
\end{rem}

\begin{rem}
  In practical situations we are often only interested in the number of metastases and not in the density $\rho$ itself. Notice that thanks to the formula $\int_\Om \rho(t,X)dX=\int_0^t\int_{\d\Om}\trho_1(t,\tau,\sig)d\sig d\tau + \int_\Om \rho^0(X)dX$, we don't have to compute the jacobians $J_1$, $J_2$ to get the number of metastases. Yet, we still have to compute the characteristics since they are requested in the computation of the boundary condition (see formula \eqref{approxDonneesCont}).
\end{rem}

\section{Numerical simulations}

\subsection{Simulation technique and parameters}

Since our equation is two-dimensional, the computational cost can be relatively high because of the integral term in the boundary, especially for large-time simulations. In order to take into account that the metastases are born with a vasculature very close to a given value $\tt_0$, we examine replacing the function $N(\sigma)$ by a dirac measure. In \cite{benzekry2}, we demonstrate that if we take $N(1,\tt)=N^\eps(1,\tt)=\frac{1}{2\eps}\mathbf{1}_{\tt\in[\tt_0 - \eps,\,\tt_0+\eps]}$ and let $\eps$ go to zero the solution of the problem \eqref{equationNonAutonome}, in the case of an autonomous velocity field $G$ and initial condition equal to zero, converges to the measure solution of a limit problem consisting in replacing $N$ by a dirac measure in $(1,\tt_0)$. We use here these considerations to reduce the computational cost and simulate only along the characteristic passing through $(1,\tt_0)$, that is to say the scheme \eqref{schemaRho1} with only one discretization point $\sig_0$ on $\d\Om$ and $N(\sig)=\mathbf{1}_{\sig=\sig_0}$. Moreover, we use a Runge-Kutta method of order 4 for the approximation of the characteristics and a trapezoid method for the approximation of the boundary condition.
\\
The values of the parameters for the tumoral growth are taken from \cite{folkman}, where they were fitted to mice data. Following \cite{iwata} and \cite{BBHV}, we take $\alpha=2/3$ and fix the value of $m$ arbitrarily. The values of the parameters (without the treatment) are gathered in the table \ref{valParam}. The size (= volume) is expressed in $mm^3$ though until now it was thought as the number of cells.
\begin{table}[ht]
$\begin{array}{|c|c|c|c|c|c|c|}
\hline
a & c & d & x_0 \text{ (initial $x$)} & \theta_0\text{ (initial }\theta\text{)}& m & \alpha \\
(day^{-1}) & (day^{-1}) & (day^{-1}vol^{-2/3}) & (vol) & (vol) & (Nb\,of\,meta)(day^{-1})(vol^{-\alpha}) & \\
\hline
0.192 & 5.85 & 8.73\times 10^{-3} &10^{-6} &  625 & 10^{-3} & 2/3 \\
\hline
\end{array}$
\caption{Values of the parameters without treatment.}\label{valParam}
\end{table}

\subsection{Simulations without treatment}

A very important issue for clinicians is to determine the number of metastases which are not visible with medical imaging techniques (micro-metastases). Having a model for the density of metastases structured in size allows us to compute the number of visible and non-visible metastases. We took as threshold for a metastasis to be visible a size of $10^8$ cells, that is $100\;mm^3$ by using the conversion $1\;mm^3\simeq 10^6$ cells. In the figure \ref{VisibleVSTotal}, we plotted the result of a simulation showing both the total number of metastases as well as only the visible ones. We observe that at day $20$ the model predicts approximately one metastasis though it is not visible. At the end of the simulation, the total number of metastases is much bigger than the number of visible ones.\\
Thus, an interesting application of the model would be to help designing a predictive tool for the total number of metastases present in the organism of the patient. In this perspective, we define a metastatic index as the integral of $\rho$ on $\Om$ : 
$$MI(t):=\int_\Om \rho(t,X)dX.$$
Of course, this index depends on the values of the parameters, for example on the parameter $m$, as shown in the table \ref{variation_m}. The larger $m$, the larger the metastatic index. 
\begin{table}[!h]
$\begin{array}{|c|c|c|c|}
\hline
 & MI(1.5) & MI(7.5)    & MI(15)  \\
\hline
m=10^{-4}  & 5.80\times 10^{-3} & 6.60\times 10^{-2} & 2.79\times 10^{-1} \\ 
\hline
m=10^{-3}  & 5.80\times 10^{-2 }& 6.60\times 10^{-1} & 2.81 \\ 
\hline
m=10^{-2}  & 5.80\times 10^{-1} & 6.62 & 30.1 \\ 
\hline
\end{array}$
\caption{Variation of the number of metastases with respect to $m$.}\label{variation_m}
\end{table}
In this table, we remark that at least for times less than $15$ days, it seems that the metastatic index is linear in $m$. Indeed, this can be explained by the fact that at the beginning, most of the metastases come from the primary tumor and not by the metastases themselves (see figure \ref{primVSMeta}.A). This means that the renewal term in the boundary condition of \eqref{equationNonAutonome} could be neglected for small times and that the solution of \eqref{equationNonAutonome} is close to the one of
\edpn{\d_t \rho + \div(\rho G)=0 \\
-\Gn \rho(t,\sig)=N(\sig)\beta(X_p(t)) \\
\rho^0(X)=0 .
}
But then, integrating the equation on $\Om$ gives $MI(t)=\int_0^t \beta(X_p(s))ds=m\int_0^t x_p(s)^\alpha ds$, where $X_p(s)=(x_p(s),\tt_p(s))$ represents the primary tumor and solves the system \eqref{g1}-\eqref{g2} with initial condition $(x_0,\tt_0)$. The figure \ref{primVSMeta}.B shows that for larger times metastases emitted by the metastases themselves are more important than the ones emitted by the primary tumor. The metastatic index for large time is then not anymore linear in $m$ (result not shown).

\undessincm{VisibleVSTotal}{Evolution of the total number of metastases and of the number of visible metastases, that is whose size is bigger than $100\,mm^3(\simeq 10^8\,cells)$.}{VisibleVSTotal}{5}

\deuxdessinscm{primVSMetaT50}{primVSMetaT100}{Number of metastases emitted by the primary tumor and by the metastases themselves. A. T=50. B. T=100}{primVSMeta}{5}

\subsection{Simulations with treatment}

\subsubsection{Anti-angiogenic drug alone}

\paragraph{}We present various simulations of treatments, first involving an anti-angiogenic drug (AA) alone, in order to investigate the difference in effectiveness of various drugs regarding to their pharmacokinetic/pharmacodynamic parameters. The first result shown in figure \ref{DroguesFolkman} takes the three drugs which were used in \cite{folkman} where only the effect on tumor growth was investigated, and simulates the effect on the metastases.  The three drugs are TNP-470, endostatine and angiostatine and each drug is characterized by two parameters in the model : its efficacy $e$ and its clearance rate $clr_A$. These parameters were retrieved in \cite{folkman} by fitting the model to mice data. The administration protocols are the same for endostatine and angiostatine ($20$ mg every day) but for TNP-470 the drug is administered with a dose of $30$ mg every two days.  We observe that TNP-470 seems to have the poorest efficacy due to its large clearance. As noticed in \cite{folkman}, the ratio $e/clr_A$ should govern the efficacy of the drug and its value is $0.13$ for TNP-470 and $0.39$ for both endostatine and angiostatine. The model we developed is now able to simulate efficacy of the drugs on the metastatic evolution (figure \ref{DroguesFolkman}.C). Interestingly, the drug which seems to be more efficient regarding to the tumor size at the end of the simulation (day $15$), namely angiostatine, is not the one which gives the best result on the metastases. Indeed, the lower efficacy of endostatine regarding to ultimate size is due to a relatively high clearance provoking a quite fast rebound of the angiogenic capacity once the treatment stops. But since the tumor size was lower for longer time, the metastatic evolution was better contained. This shows that the model could be a helpful tool for the clinician since the response to a treatment can differ from the primary tumor to metastases, but the clinician has no data about micro-metastases which are not visible with imagery techniques.
\paragraph{}One of our main postulate in the treatment of cancer is that for a given drug, the effect can vary regarding to the temporal administration protocol of the drug, due to the combination of the pharmacokinetic of the drug and the intrinsic dynamic of tumoral and metastatic growth. To investigate the effect of varying the administration schedule of the drug, we simulated various administration protocols for the same drug (endostatine). The results are presented in figure \ref{Endostatine}. We gave the same dose and the same number of administrations of the drug but either uniformly distributed during 10 days (endostatine 2), concentrated in 5 days (endostatine 1) or in 2 days and a half (endostatine 3). We observe that the tumor is better stabilized with a uniform administration of the drug (endostatine 2) but the number of metastases is better reduced with the intermediate protocol (endostatine 1). It is interesting to notice that again if we look at the effects at the end of the simulation, the results are different for the tumor size and for the metastases. The two protocols endostatine 1 and endostatine 2 give the same size at the end, but not the same number of metastases. Moreover, the best protocol regarding to minimization of the final number of metastases (endostatine 1) is neither the one which provoked the largest regression of the tumor during the treatment (endostatine 3) nor the one with the most stable tumor dynamic (endostatine 2). In the figure \ref{EndostatineDose}, we investigate the influence of the AA dose (parameter $D_A$) on tumoral, vascular and metastatic evolution. We observe that the model is consistent since it exhibits a monotonous response to variation of the dose.

\begin{figure}[p]\begin{center}
    \includegraphics[width=6cm]{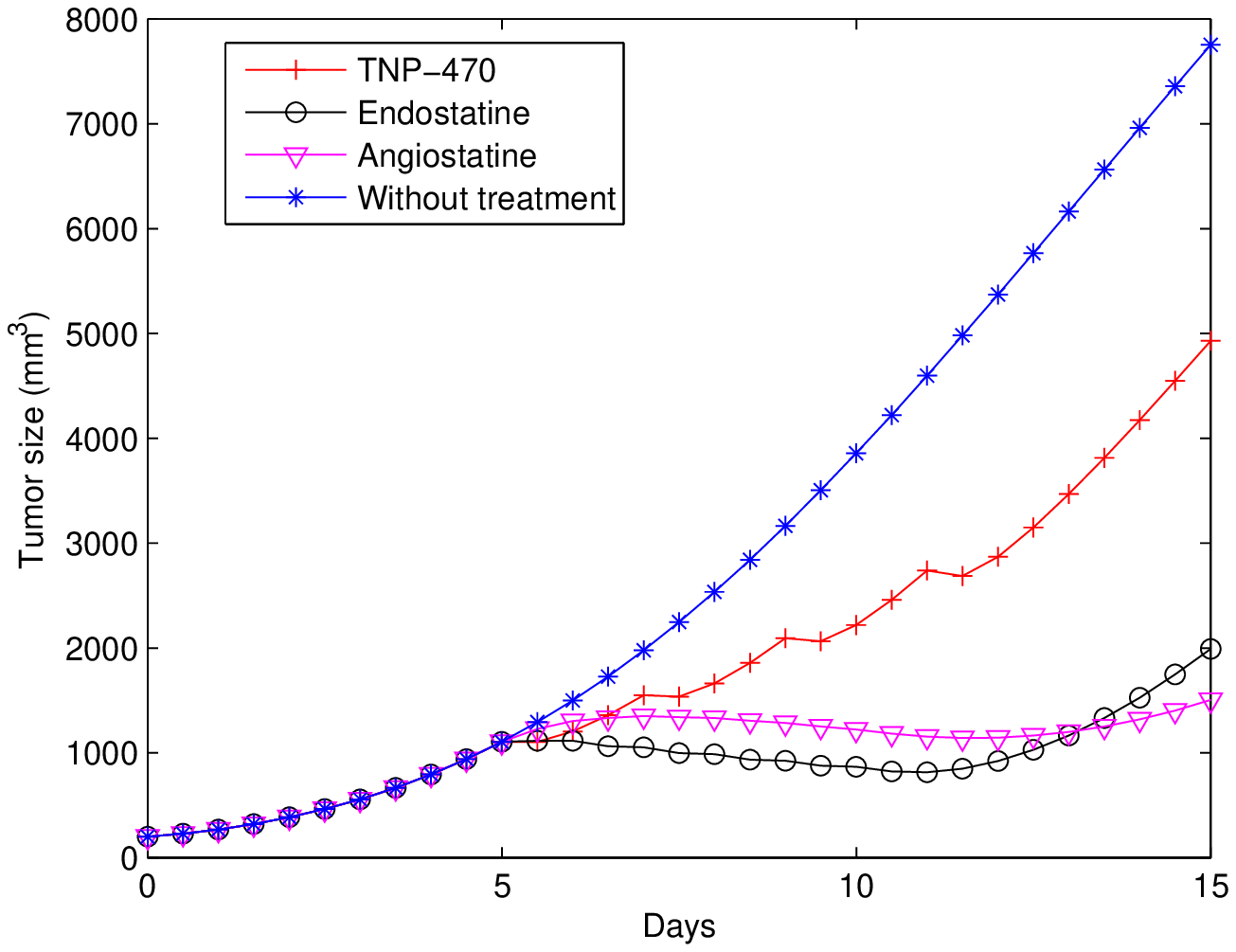} \rotatebox[origin=c]{0}{A}\hskip 0.3cm
\includegraphics[width=6cm]{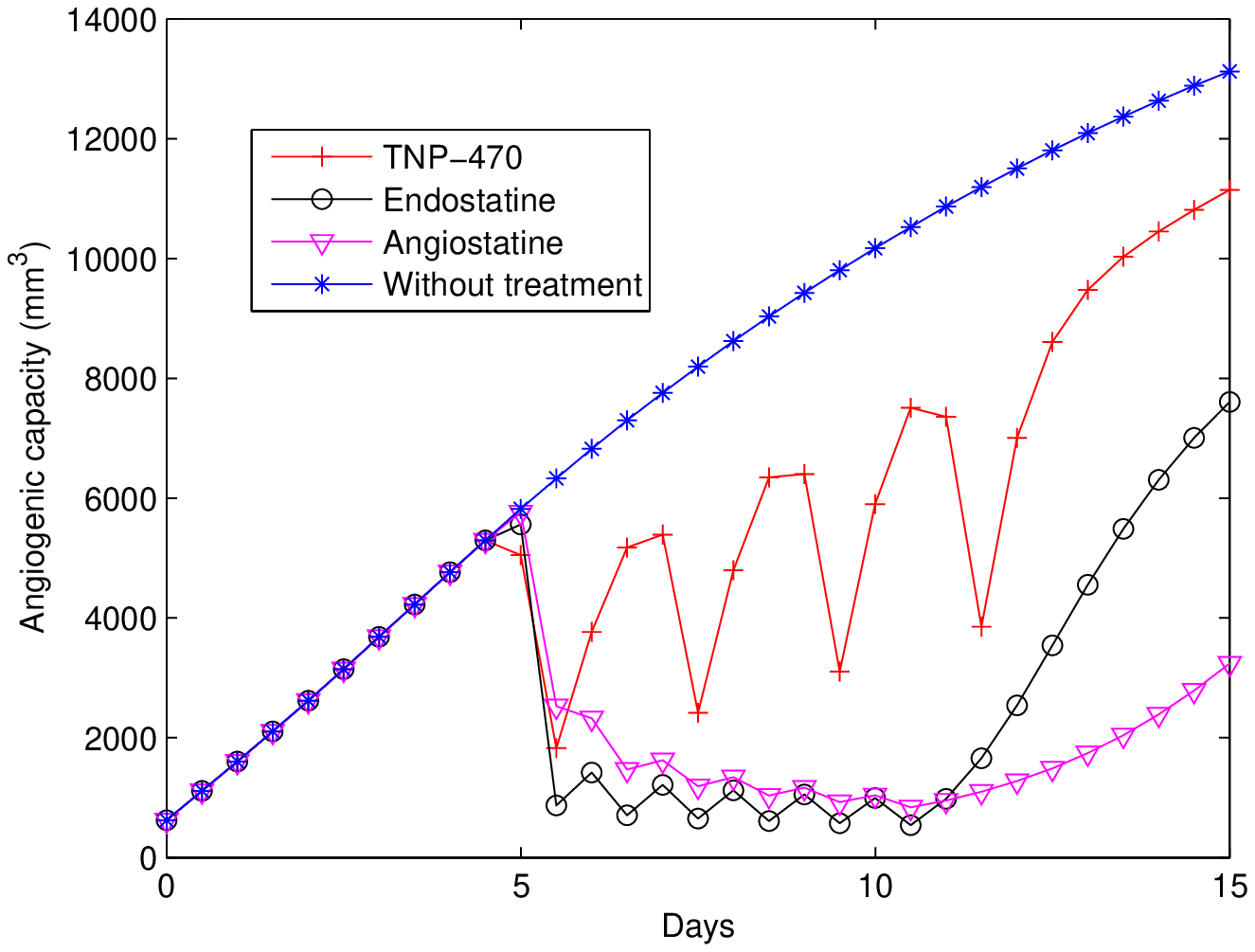} \rotatebox[origin=c]{0}{B}\hskip 0.3cm
\includegraphics[width=12cm]{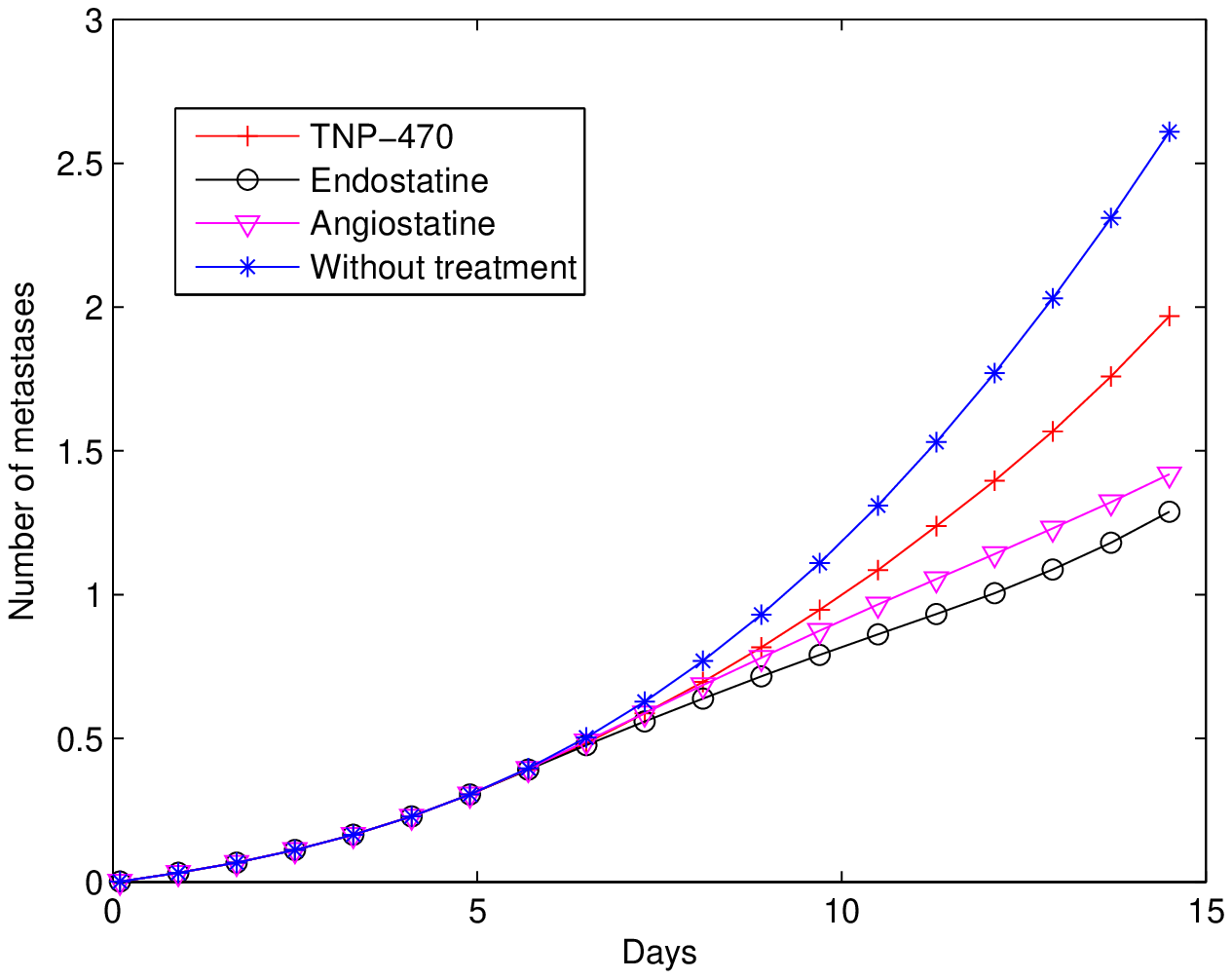} \rotatebox[origin=c]{0}{C}
    \end{center}
    \caption{Effect of the three drugs from \cite{folkman}. The treatment is administered from days 5 to 10. Endostatine ($e = 0.66, \, clr_A = 1.7 $) 20 mg every day, TNP-470 ($e = 1.3, \, clr_A = 10.1 $) 30 mg every two days and Angiostatine ($e = 0.15, \, clr_A = 0.38 $) 20 mg every day. A : Tumor size. B : Angiogenic capacity. C : Number of metastasis.}\label{DroguesFolkman}
\end{figure}

\begin{figure}[p]\begin{center}
    \includegraphics[width=6cm]{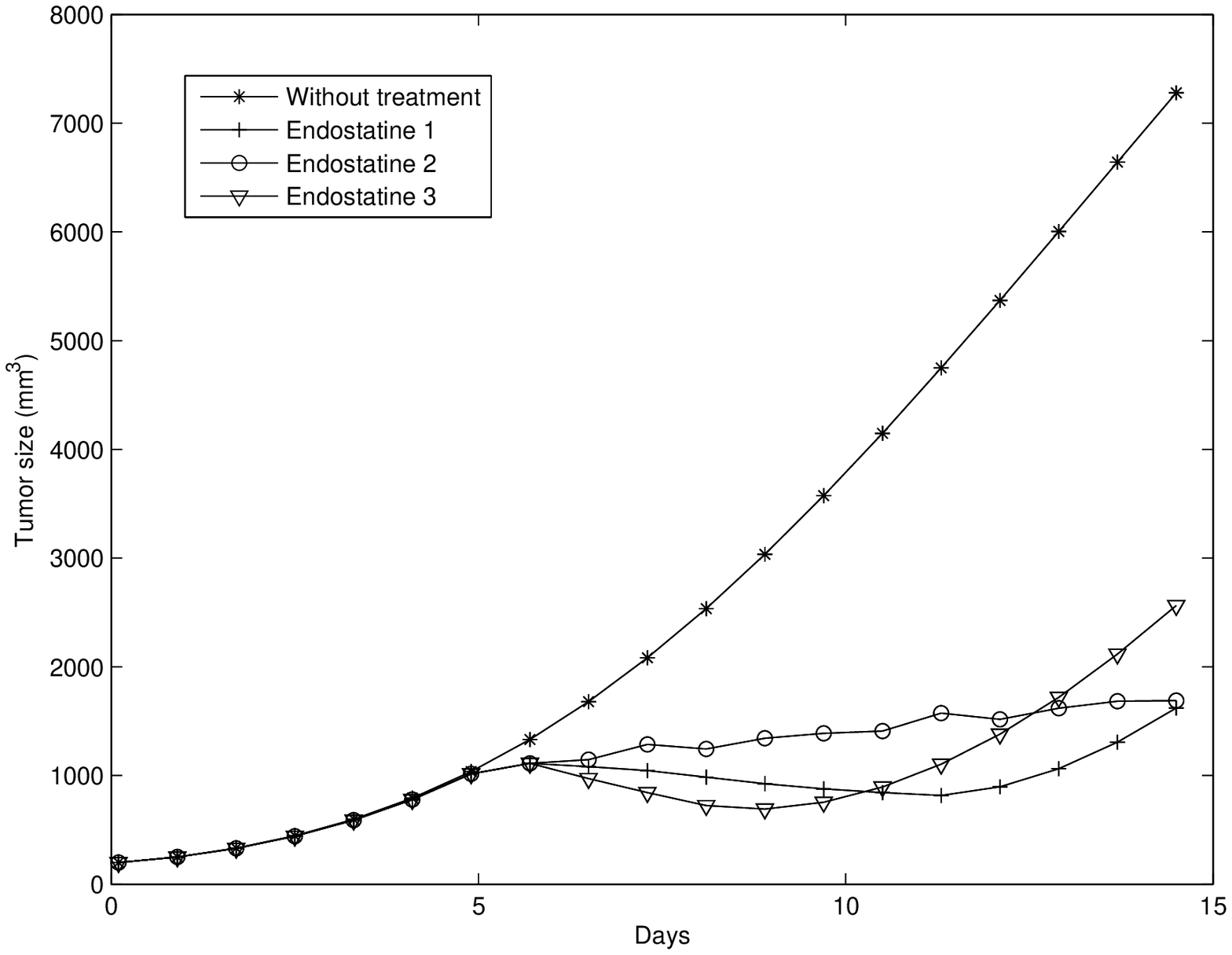} \rotatebox[origin=c]{0}{A}\hskip 0.3cm
\includegraphics[width=6cm]{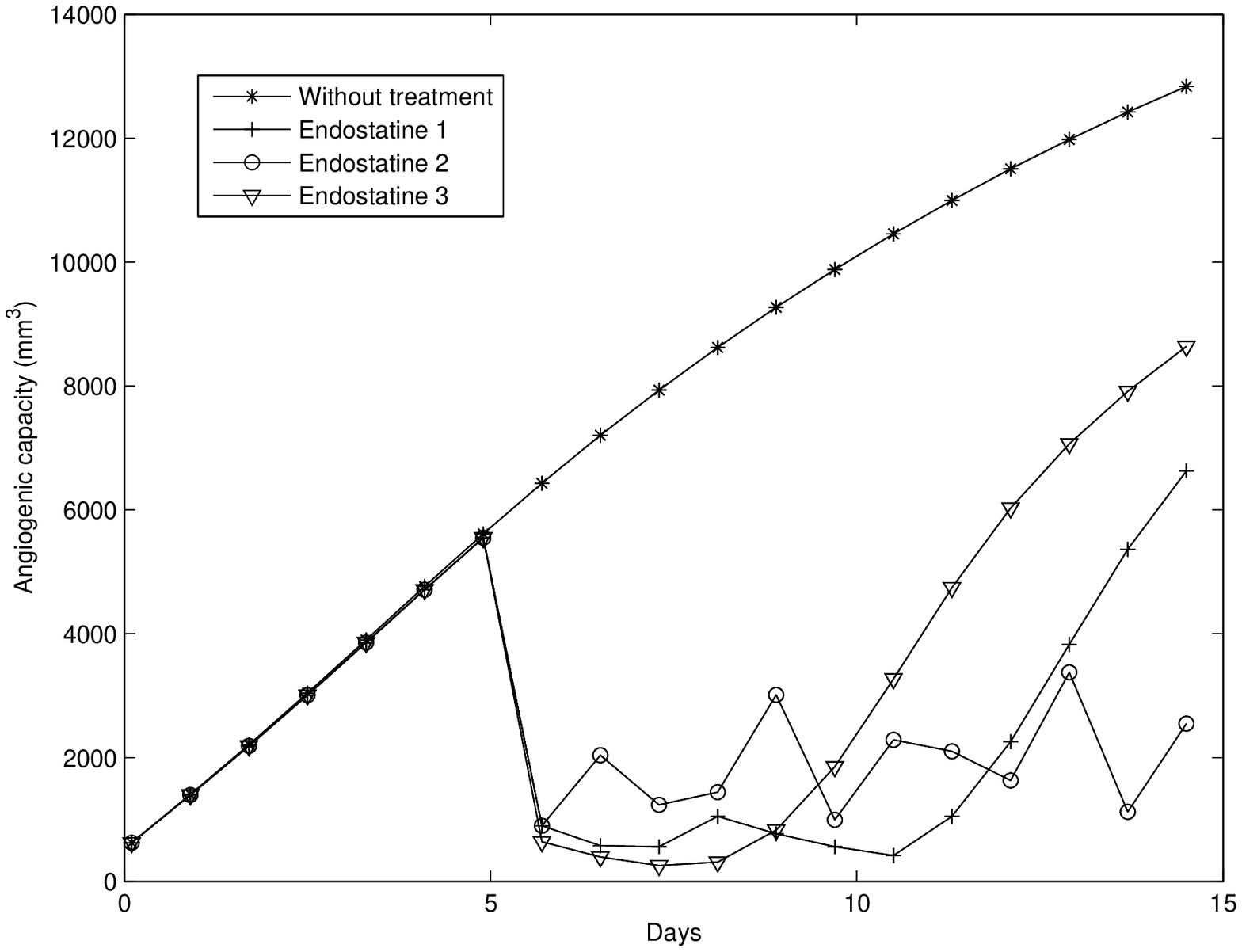} \rotatebox[origin=c]{0}{B}\hskip 0.3cm
\includegraphics[width=12cm]{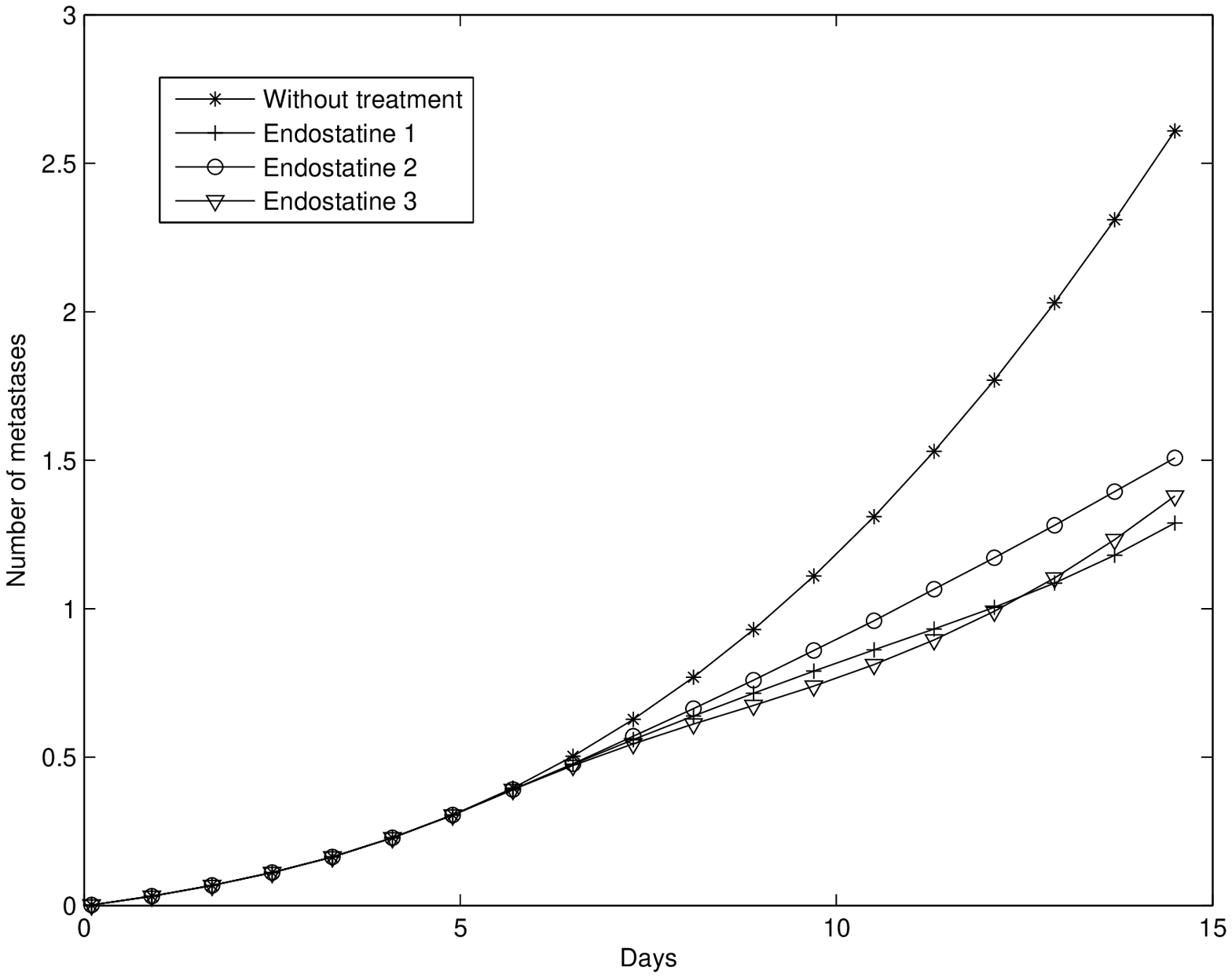} \rotatebox[origin=c]{0}{C}
    \end{center}
    \caption{Three different temporal administration protocols for the same drug (Endostatine). Same dose (20 mg) and number of administrations (6) but more or less concentrated at the beginning of the treatment. Endostatine 1 : each day from day 5 to 10. Endostatine 2 : every two days from day 5 to 15. Endostatine 3 : twice a day from day 5 to 7.5. A : Tumor size. B : Angiogenic capacity. C : Number of metastasis.}\label{Endostatine}
\end{figure}
    
\begin{figure}[p]\begin{center}
\includegraphics[width=6cm]{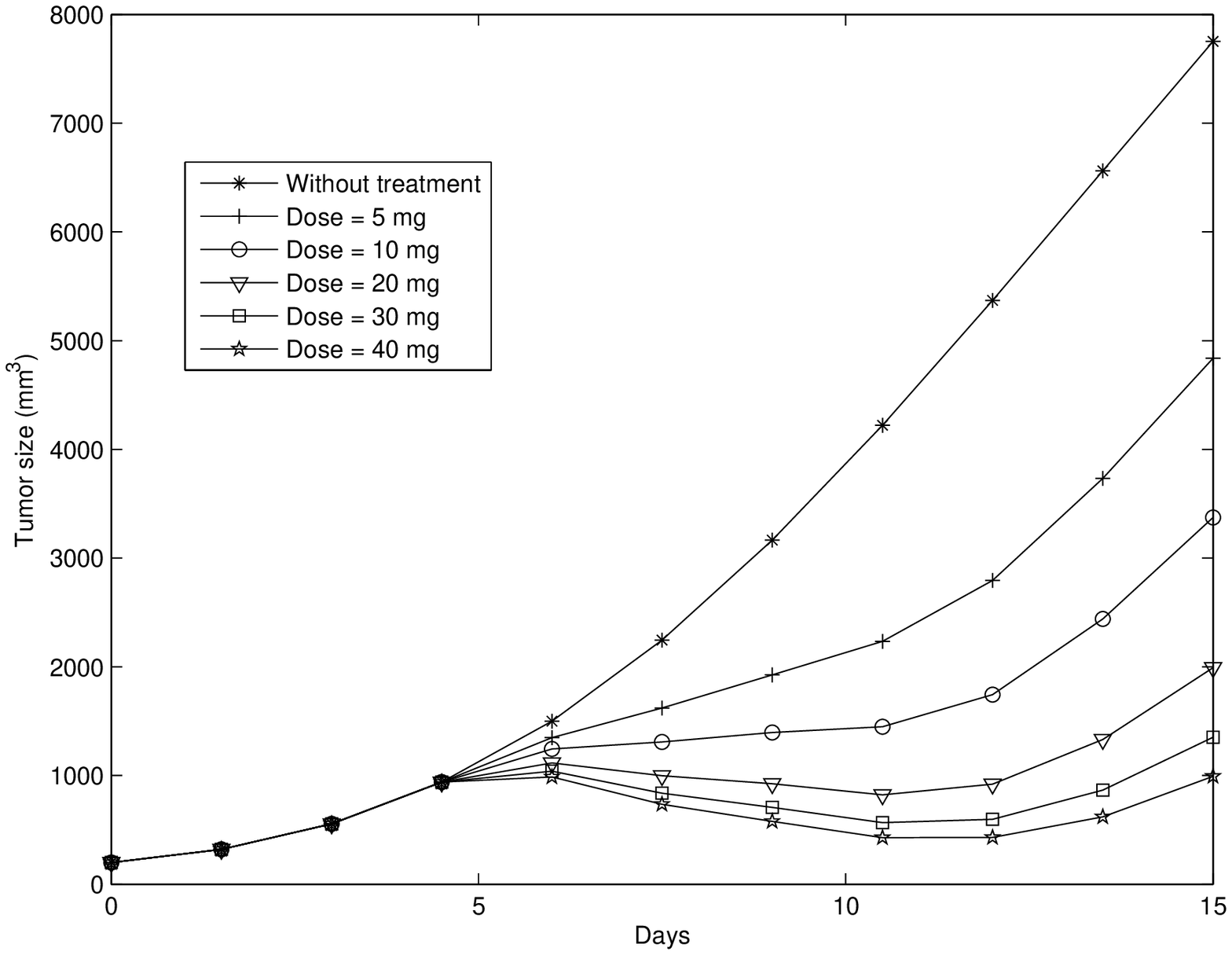} \rotatebox[origin=c]{0}{A}\hskip 0.3cm
\includegraphics[width=6cm]{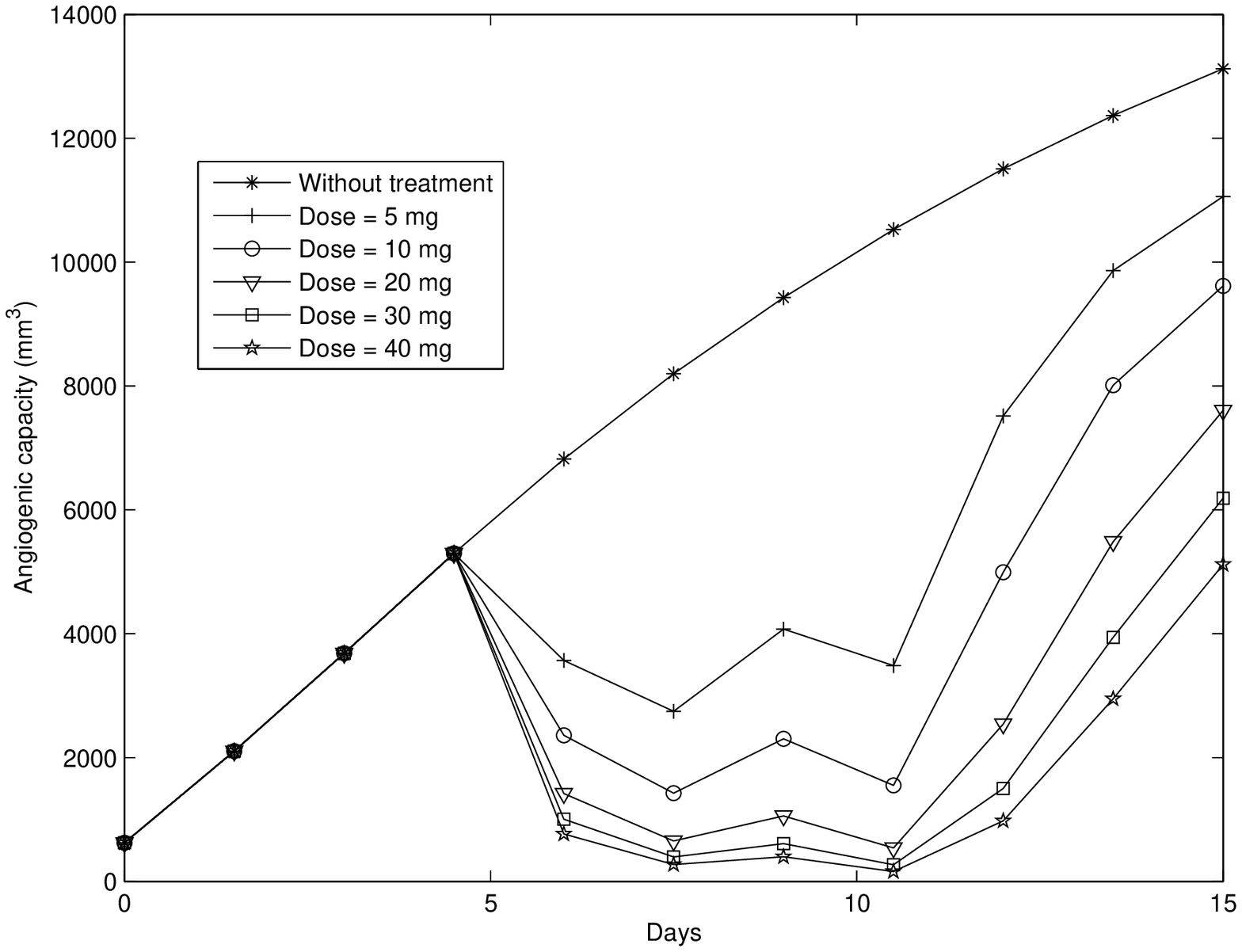} \rotatebox[origin=c]{0}{B}\hskip 0.3cm
\includegraphics[width=12cm]{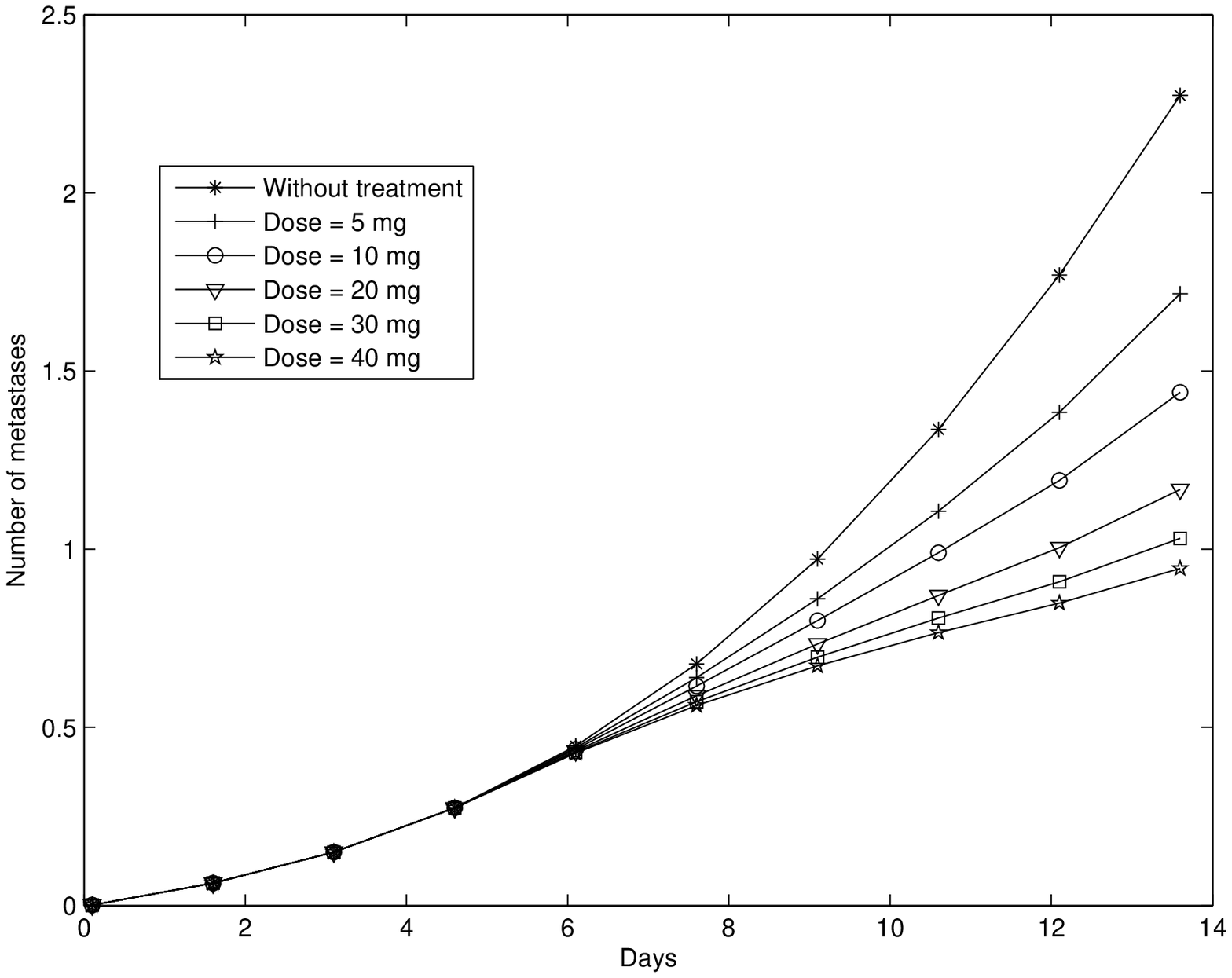} \rotatebox[origin=c]{0}{C}
    \end{center}
    \caption{Effect of the variation of the dose for endostatine. A : Tumor size. B : Angiogenic capacity. C : Number of metastasis.}\label{EndostatineDose}
\end{figure}

\subsubsection{Combination of anti-angiogenic and cytotoxic drug}

An important question in clinical oncology is to determinate how to combine a cytotoxic drug (CT) that kills the proliferative cells and an anti-angiogenic (AA) drug which acts on the angiogenic process, either by blocking the angiogenic factors like VEGF (monoclonal antibodies, e.g. Bevacizumab) or by inhibiting the receptors to this molecule. The AA drugs are classified as part of the cytostatic drugs as they aim to stabilize the disease. For instance, in the treatment of breast cancer, patients which express the receptor HER receive a combination of Docetaxel (CT) and Herceptine (a tyrosine kinase inhibitor, AA). Two questions are still open : which drug should come before the other and then what is the best temporal repartition for each drug? Here, we perform a brief \textit{in silico} study of the first question. Since we don't have real parameters for the cytotoxic drug we fix arbitrarily the value of each parameter $h$, $clr_C$ and $D_C$ to $1$, and perform simulations of the model to investigate combination of the CT and the AA. In the figure \ref{CombinaisonCTAA}, we present the results of two simulations : one giving the AA before the CT (fig. \ref{CombinaisonCTAA}.A) and the other one doing the opposite (fig. \ref{CombinaisonCTAA}.B). Although in both cases the effect on the metastases is very good since the growth seems stopped (fig. \ref{CombinaisonCTAA}.D), it appears that the qualitative behaviors of the tumoral and metastatic responses are different regarding to the order of administration of the drugs (fig. \ref{CombinaisonCTAA}.C and \ref{CombinaisonCTAA}.D). According to the model, it would be better to administrate first the CT in order to reduce the tumor burden and then use the AA to stabilize the disease. Indeed, the number of metastasis at the end of the simulation is lower when the CT is applied first than in the opposite case. Of course, this conclusion depends on the tumoral growth and drugs parameters but this simulation shows that the model is able to exhibit different responses regarding to the order of administration between CT and AA drugs. 

\quatredessins{TailleCA_AACT}{TailleCA_CTAA}{CTAA_tum}{CTAA_meta}{Combination of an anti-angiogenic drug (AA) : endostatine ($e=0.66$, $clr_{A}=1.7$ and $D_{A}=20 mg$) and a cytotoxic one (CT). The parameters for the CT are : $h=1$,  $clr_{C}=1$, and $D_{C}=1$. A.  AA from day 5 to 10 then CT from day 10 to 15, every day. Tumor growth and angiogenic capacity. B. CT from day 5 to 10 then AA from day 10 to 15, every day. Tumor growth and angiogenic capacity. C : Comparison between both combinations on the tumor growth. D : Comparison between both combinations on the metastatic evolution.}{CombinaisonCTAA}

\section{Conclusion}
In this paper, we combined the models of \cite{iwata} and \cite{folkman} to obtain a model aiming at describing the effect of anti-angiogenic drugs on the metastatic growth. We established the well-posedness of the model and developed an efficient numerical scheme to perform simulations, which could be adapted to similar models in higher dimensions. The model can now be used in order to rationalize the temporal administration of the anti-angiogenic drugs. To achieve this, we have to implement the various pharmacokinetic models of the different AA drugs and then compare the \textit{in silico} predictions to real patient data. \\
An important open problem in this direction is the mathematical parameter identifiability of the model, that is to say the inverse problem of uniqueness of the parameters resulting in a given observation. It is also important to develop efficient numerical methods able to achieve the parameter identification from the data. Indeed, identifying the parameters $m$ and $\alpha$ in a given patient could determine the metastatic aggressiveness of its cancer, through the metastatic index. This could lead to interesting clinical applications such as a refinement of the existing classifications like TNM or SBR, which deal only with the visible metastases. \\
As shown in \cite{ebos}, the metastatic response to AA treatment depends on the time schedule of the drug. The results of the simulations are encouraging in the perspective of using the model as a tool able to test various real temporal administration protocols of the drugs and to perform predictions of the mathematically optimized schedule for a given drug. Moreover, AA are never used in a monotherapy but rather combined with a cytotoxic drug, and determining the best way to combine both drugs is still a clinical open question \cite{riely}. As shown in the figure \ref{CombinaisonCTAA}, the model could help in this direction, regarding both to tumor regression and metastatic evolution of the disease. We should also develop further the modeling in order to take into account for the competition effects between CT and AA. Indeed, by reducing the vasculature AA drugs should induce worse supply of both drugs and on the contrary some arguments are expressed in favor of a normalization effect on the tumor vasculature by AA therapy \cite{jain}, at least at the beginning of the treatment. These elements should be incorporated to the model \textit{via} nonlinear terms involving the drugs concentrations in the equations \eqref{g1}-\eqref{g2}. The relative simplicity of the model ($6$ parameters without treatment) is a great advantage in view of concrete applications since we have to be able to fit the model to patients' data in order to retrieve their parameters and then perform predictions about the optimal schedule.\\
A fundamental problem that we have to integrate in our model is the one of toxicities which have to be dynamically controlled to optimize the scheduling of the drug. In the case of CT and on the tumoral growth, a model dealing with hematological toxicities is used to drive phase I clinical trials \cite{barboTrial, barbo}. In our case, we also have to integrate a module to control the toxicity and address the resulting problem of optimization under constraints.\\
Eventually, our model can be used to run \textit{in silico} tests about the paradigm of metronomic chemotherapies which consists in delivering a cytotoxic drug at low doses and uniformly distributed in the treatment cycle rather than administrating the maximum tolerate dose (MTD) at the beginning of the cycle. Indeed, these metronomic protocols seem to have a dynamical anti-angiogenic effect \cite{hahnfeldt03, barboMetro} that can be integrated in the model of \cite{folkman} for the tumour growth and in our model for the effect on metastases.

\appendix

\section{Proof of the proposition \ref{propChgtVar}}

 The result for the second map is classical. For the first one, we have to deal with irregular points of the boundary $\d\Om$. We denote by $\chi$ the set of such points and set $\chi_t:=\{X(t;\tau,\xi);\;\xi \in \chi,\;0\leq \tau \leq t\}$. In order to prove the result, it is sufficient to prove that the map 
$$X_1^t:\begin{array}{ccc} ]0,t[\times \d\Om\setminus\chi & \rightarrow & \Om_1^t\setminus\chi_t \\
		       (\tau,\sig)                  & \mapsto      & X(t;\tau,\sig)
  \end{array}$$
is a diffeomorphism, that globally the map $X_1^t:[0,t]\times\d\Om\rightarrow \bOm_1^t$ is bilipschitz and that its inverse is $X\mapsto(\tau^t(X),\sig^t(X))$. For the first point, since we avoid the irregular points of the boundary by excluding the set $\chi$, we have the $C^1$ regularity. It remains to prove that $X_1^t(\tau,\sig)$ is one-to-one and onto, and that its inverse is $C^1$.\\
\espace $\bullet$\textit{ The map $X_1^t$ is one-to-one and onto}. Let $t>0$ and $X\in \Om_1^t$. We have $X_1^t(\tau^t(X),\sig^t(X))=X(t;\tau^t(X),\sig^t(X))=X(t;\tau^t(X),X(\tau^t(X);t,X))=X(t;t,X)=X$.\\
 For the injectivity, we remark that if we have $X(t;\tau,\sig)=X(t;\tau',\sig')$ with for instance $\tau'<\tau$, then $\sig=X(\tau;\tau',\sig')$ which is prohibited by the assumption that $\Gn(\tau,\sig)>0$. Thus $X_1^t$ is one-to-one and we have, for $(\tau,\sig)\in [0,t]\times \d\Om$ : $X(t;\tau^t(X_1^t(\tau,\sig)),\sig(X_1^t(\tau,\sig)))=X(t;\tau,\sig)$ which implies $\tau^t(X_1^t(\tau,\sig))=\tau$. Thus, we have proven that the inverse of $X_1^t$ is $X\mapsto(\tau^t(X),\sig^t(X))$. \\
\espace $\bullet$\textit{ The map $X_1^t$ is a diffeomorphism}. We will prove the formula \eqref{formule_jacobien} for $J_1$ which will conclude the proof by using the local inversion theorem. We have $J_1(t;\tau,\sig)=|\d_\tau X_1^t \wedge \d_\sig X_1^t | $, with $\d_\sig X_1^t:= D_Y X \circ \sig'$ for $\sig$ being a parametrization of $\d\Om$ and $D_Y X \in \mathcal{M}_2(\R)$ the derivative in $Y$ of $X(t;\tau,Y)$ viewed as the flow on $\barre{\Om}$. We compute
\begin{align*}
\d_t (\d_\tau X_1^t \wedge \d_\sig X_1^t) & =\d_\tau \d_t X_1^t \wedge \d_\sig X_1^t + \d_\tau X_1^t \wedge \d_t (D_YX_1^t \circ \sig')
          = \d_\tau (G\circ X_1^t)\wedge \d_\sig X_1^t  + \d_\tau X_1^t \wedge DG\circ D_YX_1^t \circ \sig' \\
         & = DG\circ \d_\tau X_1^t \wedge \d_\sig X_1^t  + \d_\tau X_1^t \wedge DG\circ \d_\sig X_1^t 
          = \div(G) (\d_\tau X_1^t \wedge \d_\sig X_1^t).
\end{align*}
We compute now directly the value of $J_1(t;t,\sig)$. We define
$$T(h)=\frac{X_1^t(t;t+h,\sig) - X_1^t(t;t,\sig)}{h}$$
and now notice that we can write 
\begin{align*}
X_1^t(t;t,\sig) & =X_1^t(t;t+h,X_1^t(t+h;t,\sig))\\
      & =X_1^t(t;t+h,\sig)+D_Y X_1^t(t;t+h,\sig)(X_1^t(t+h;t,\sig) - X_1^t(t;t,\sig))+o(h)\\
      & = X_1^t(t;t+h,\sig) +h D_Y X_1^t(t;t+h,\sig)\circ G(t,\sig) + o(h).
\end{align*}
Now when $h$ goes to zero $D_Y X_1^t(t;t+h,\sig) \rightarrow D_Y X_1^t(t;t,\sig)=Id$ since $X_1^t(t;t,Y)=Y$. Finally, we have $T(h) \rightarrow -G(t,\sig)$, thus $\d_\tau X_1^t(t;t,\sig)=-G(t,\sig)$ and $\d_\tau X_1^t \wedge \d_\sig X_1^t (t;t,\sig)=-G(t,\sig)\wedge \sig'=G(t,\sig)\cdot\nu(\sig)$. Solving the differential equation between times $\tau$ and $t$ and taking the absolute value then gives the formula \eqref{formule_jacobien}.\\
\espace $\bullet$\textit{ Globally, $X_1^t$ is bilipschitz.} It is possible to show that $|||DX_1^t|||_{L^\infty([0,t]\times\d\Om)}\leq e^{t|||DG|||_{L^\infty([0,T]\times\bOm}}$. On the other hand, using the formula $(DX_1^t)^{-1}=J_1^{-1} ~^tCom(DX_1^t)$ and the fact that from \eqref{formule_jacobien} $J_1^{-1}$ is bounded on $\bOm_1^t$ thanks to the assumption \eqref{hypG} we have $|||(DX_1^t)^{-1}|||_{L^\infty(\bOm_1^t)}<\infty$. Thus $X_1^t$ and $(X_1^t)^{-1}$ are Lipschitz on $[0,t]\times\d\Om\setminus\chi$ and $\Om_1^t\setminus\chi_t$ respectively, and they are both globally continuous on $[0,t]\times\d\Om$ and $\bOm_1^t$. Hence they are globally Lipschitz.

\begin{rem}
 Using the same technique than in the previous proof, we can calculate the
derivative of $X_1(t;\tau,\sig)$ in the $\tau$ direction. Indeed we
compute, for all $t,\tau,\sig$
\begin{align*}
 X_1(t;\tau,\sig)&=X_1(t;\tau +h, X_1(\tau +h;\tau,\sig)) \\
		 &=X_1(t;\tau+h,\sig) + D_Y
X_1(t;\tau+h,\sig)(X_1(\tau +h;\tau,\sig)-X_1(\tau;\tau,\sig) ) +
o(h) \\
		  &=X_1(t;\tau+h,\sig) +hD_Y X_1(t;\tau+h,\sig)\circ
G(\tau,\sig) + o(h)
\end{align*}
which gives 
\begin{equation}\label{derivee_tau_Phi}
\d_\tau
X_1(t;\tau,\sig)=\underset{h\rightarrow 0}{\rm
lim}\frac{X_1(t;\tau+h,\sig) - X_1(t;\tau,\sig)}{h}=- D_Y
X_1(t;\tau,\sig)\circ G(\tau,\sig).  
\end{equation}
\end{rem}

\bibliographystyle{plain}

\subsection*{Acknowledgment}
The author would like to express its gratitude to the following people for great support and helpful discussions : D. Barbolosi, A. Benabdallah, F. Hubert and F. Boyer. This work was partially supported by ANR project MEMOREX.

\end{document}